
\documentclass{paper}
\pdfoutput=1


\usepackage{import}
\usepackage{quiver}
\usepackage{csquotes}


\newcommand{\juliette}[1]{}
\newcommand{\lauren}[1]{}

\newcommand{\reg}{\operatorname{reg}}
\newcommand{\trunc}{\operatorname{trunc}}
\newcommand{\betti}{\operatorname{betti}}

\newcommand{\Coh}{\operatorname{Coh}}

\newcommand{\Om}{\Omega}

\newcommand{\shM}{\tilde M} 
\newcommand{\cT}{\mathcal T} 
\newcommand{\cW}{\mathcal W} 
\newcommand{\cK}{\mathcal K} 
\newcommand{\cL}{\mathcal L}
\newcommand{\cE}{\mathcal E}
\newcommand{\cF}{\mathcal F}
\newcommand{\cG}{\mathcal G}

\newcommand{\R}  {\mathbf R} 
\renewcommand{\L}{\mathbf L} 
\newcommand{\Gs}{\Gamma_{\!*}}

\newcommand{\cech}{\check{C}}
\newcommand{\Alt}{\bigwedge\nolimits}

\renewcommand{\aa}{\mathbf a}
\newcommand{\bb}{\mathbf b}
\newcommand{\cc}{\mathbf c}
\newcommand{\dd}{\mathbf d}
\newcommand{\ee}{\mathbf e}

\newcommand{\vv}{\mathbf v}
\newcommand{\ww}{\mathbf w}
\newcommand{\pp}{\mathbf p}
\newcommand{\nn}{\mathbf n}
\newcommand{\one}{\mathbf 1}
\newcommand{\zero}{\mathbf 0}

\newcommand{\PG}[1]{{\color{PineGreen}#1} }
\newcommand{\dao}[1]{{\color{DarkOrchid}#1} }


\title{Characterizing Multigraded Regularity and \\ Virtual Resolutions on Products of Projective Spaces}

\author{Juliette Bruce}
\address{Department of Mathematics, Dartmouth College, Hanover, NH}
\email{\href{mailto:juliette.bruce@dartmouth.edu}{juliette.bruce@dartmouth.edu}}
\urladdr{\url{https://www.juliettebruce.xyz}}

\author{Lauren Cranton Heller}
\address{Department of Mathematics, University of Nebraska, Lincoln, NE}
\email{\href{mailto:lheller2@unl.edu}{lheller2@unl.edu}}
\urladdr{\url{https://lcrantonh.github.io/}}

\author{Mahrud Sayrafi}
\address{School of Mathematics, University of Minnesota, Minneapolis, MN}
\email{\href{mailto:mahrud@umn.edu}{mahrud@umn.edu}}
\urladdr{\url{https://math.umn.edu/~mahrud/}}

\subjclass[2020]{13D02,14M25}

\begin{document}

\begin{abstract}
  We explore the relationship between multigraded Castelnuovo--Mumford regularity, truncations, Betti numbers, and virtual resolutions on a product of projective spaces $X$\!.\linebreak After proving a uniqueness theorem for certain virtual resolutions, we show that the multigraded regularity region of a module $M$ is determined by the minimal graded free resolutions of the truncations $M_{\geq\dd}$ for $\dd\in\Pic X$\!. Further, by relating the minimal graded free resolutions of $M$ and $M_{\geq\dd}$ we provide a new bound on multigraded regularity of $M$ in terms of its Betti numbers. Using this characterization of regularity and this bound we also compute the multigraded Castelnuovo--Mumford regularity for a wide class of complete intersections in products of projective spaces.
\end{abstract}

\maketitle


\section{Introduction}

Castelnuovo--Mumford regularity of coherent sheaves on a projective variety is a measure of complexity in terms of the vanishing of sheaf cohomology. Its geometric significance has been studied extensively for projective spaces \cite{Mumford1966}, abelian varieties \cite{PP03,PP04}, Grassmannians \cite{Chipalkatti00}, and smooth projective toric varieties \cite{MS04}, and it has been crucial in the construction of Hilbert and Picard schemes \cite{Kleiman71}. In many of these cases regularity is connected to minimal free resolutions and syzygies of graded modules \cite{Mumford70,BM91}.

Consider the case of a single projective space.  Let $S$ be the polynomial ring on $n+1$ variables over an algebraically closed field $\kk$ and $\m$ its maximal homogeneous ideal. A coherent sheaf $\cF$ on $\PPn=\Proj S$ is $d$-regular for $d\in\ZZ$ if 
\begin{enumerate}
\item\label{item:sheaf-coh} $\HH^i(\PPn\!, \cF(b)) = 0$ for all $i > 0$ and all $b \geq d-i$.
\end{enumerate}
The Castelnuovo--Mumford regularity of $\cF$ is then the minimum $d$ such that $\cF$ is $d$-regular. In \cite{EG84}, Eisenbud and Goto considered the analogous condition on the local cohomology of a finitely generated graded $S$-module $M$\!, proving the equivalence of the following:
\begin{enumerate} \setcounter{enumi}{1}
\item\label{item:local-coh} $\HH_\m^i(M)_b = 0$ for all $i \geq 0$ and all $b > d-i$;
\item\label{item:single-lin} the truncation $M_{\geq d}$ has a linear free resolution;
\item\label{item:single-betti} $\Tor_i^S(M, \kk)_b = 0$ for all $i \geq 0$ and all $b > d+i$.
\end{enumerate}
In particular, conditions \eqref{item:sheaf-coh} through \eqref{item:single-betti} are equivalent when $M = \bigoplus_p\HH^0(\PPn\!, \cF(p))$, the graded $S$-module corresponding to $\cF$, so that $\HH_\m^0(M) = \HH_\m^1(M) = 0$ (cf.\ \cite[Prop.~4.16]{Eisenbud2005}).

In \cite{MS04}, Maclagan and Smith introduced the notion of multigraded Castelnuovo--Mumford regularity for finitely generated $\Pic(X)$-graded modules over the Cox ring of a smooth projective toric variety $X$\!. In essence their definition is a generalization of conditions \eqref{item:sheaf-coh} and \eqref{item:local-coh}.\linebreak In this setting the multigraded regularity of a module is a subset of $\Pic X$ rather than a single integer. When $X = \PPn$ the minimum element of this region is the classical regularity.

In the multigraded case, translating the geometric definition of Maclagan and Smith into algebraic conditions like \eqref{item:single-lin} and \eqref{item:single-betti} above has been an open problem. In this direction, Maclagan--Smith and later Berkesch--Erman--Smith demonstrated connections between multigraded regularity and the existence of virtual resolutions with certain twists in \cite[Thm.~7.8]{MS04} and \cite[Thm.~2.9]{BES20}. In a more general setting, Botbol--Chardin sharpened the relationship between local cohomology and multigraded Betti numbers \cite[Thm.~4.14]{BC17}. More recently, Brown--Erman explored different notions of linearity for weighted projective spaces \cite{BE25} in relation to Green's $N_p$-conditions and Benson's weighted regularity \cite{Benson04}.

In this article we focus on the case when $X$ is a product of projective spaces and establish a tight relationship between multigraded regularity, truncations, Betti numbers, and virtual resolutions. Our main results strengthen and clarify previous work in a number of directions: \linebreak
First, we extend the equivalence of \eqref{item:local-coh} and \eqref{item:single-lin} by modifying the notion of a linear resolution. Second, we prove a uniqueness theorem for virtual resolutions considered in \cite[Thm.~2.9]{BES20} and use it to show that they are precisely the minimal free resolutions of truncated modules. Finally, we provide an effective method for determining whether a specific element $\dd\in\Pic X$ lies in $\reg(M)$ without a cohomology computation.


\subsection{Truncations and Multigraded Regularity}

The obvious way one might hope to generalize Eisenbud and Goto's result to products of projective spaces is false: the truncation $M_{\geq\dd}$ of a $\dd$-regular multigraded module $M$ can have nonlinear maps in its minimal free resolution (see Example~\ref{ex:not-linear}).  We show that under a mild saturation hypothesis, multigraded Castelnuovo--Mumford regularity is determined by a different linearity condition, which we call \emph{quasilinearity} (see Definition~\ref{def:quasilinear-resolution}).

Let $S$ be the $\ZZ^r$-graded Cox ring of $\PP\nn\coloneqq\PP{n_1}\times\cdots\times\PP{n_r}$ and let $B$ be the irrelevant ideal. The definition of quasilinearity is inspired by the criterion from \cite[Thm.~2.9]{BES20}. As an example, on a product of two projective spaces the following complex contains all allowed twists for a quasilinear complex generated in degree zero:
\vspace*{-0.6em}
\[\begin{tikzcd}[]
  0&\lar \PG{S}&\lar
  \begin{matrix}
    \PG{S(-1,\;\;0)} \\[-3pt]
    \oplus \\[-3pt]
    \PG{S(\;\;0,-1)}
  \end{matrix}
  \; \oplus \;
  \begin{matrix}
    \dao{S(-1,-1)}
  \end{matrix}
  &\lar
  \begin{matrix}
    \PG{S(-2,\;\;\;0)} \\[-3pt]
    \oplus  \\[-3pt]
    \PG{S(-1,-1)}  \\[-3pt]
    \oplus  \\[-3pt]
    \PG{S(\;\;\;0,-2)}\\[-3pt]
  \end{matrix}
  \; \oplus \;
  \begin{matrix}
    \dao{S(-2,-1)}\\[-3pt]
    \oplus \\[-3pt]
    \dao{S(-1,-2)}
  \end{matrix}
  & \lar \cdots.
\end{tikzcd}\]
Within each term, the summands in the left column (green) are linear syzygies while those in the right column (pink) are nonlinear syzygies. In general, for twists $-\bb$ appearing in the $i$-th step of a quasilinear complex, the sum of the positive components of $\bb-\dd-\one$ is at most $i-1$, where $\dd$ is the degree of the generators in the first term.

Our main theorem characterizes multigraded regularity of modules on products of projective spaces in terms of the Betti numbers of their truncations. Note that the condition $\HH_B^0(M)=0$ is automatic if $M$ is the module associated to a sheaf on $\PP\nn$\!.

\begin{theoremalpha}\label{main-thm-reg}
  Let $M$ be a finitely generated $\ZZ^r$-graded $S$-module with $\HH_B^0(M) = 0$.  Then $M$ is $\dd$-regular if and only if $M_{\geq\dd}$ has a quasilinear resolution $F_\b$ with $F_0$ generated in degree $\dd$.
\end{theoremalpha}

The proof of Theorem~\ref{main-thm-reg} is based in part on a \v{C}ech--Koszul spectral sequence that relates the Betti numbers of $M_{\geq\dd}$ to the terms of the Beilinson spectral sequence which computes the Fourier--Mukai transform of $\shM(\dd)$.  Precisely, if $M$ is $\dd$-regular and $\HH_B^0(M)=0$ we prove
\begin{align}\label{eq:magic-equality}
  \dim_\kk\Tor_j^S(M_{\geq\dd}, \kk)_{\dd+\aa} = h^{|\aa|-j}\big(\PP\nn\!, \shM(\dd)\otimes\Om_{\PP\nn}^\aa(\aa)\big) \quad \text{for } |\aa|\geq j\geq 0,
\end{align}
where $\Om_{\PP\nn}^\aa$ is a product of cotangent sheaves. The regularity of $M$ implies certain cohomological vanishing for $\shM\otimes\Om_{\PP\nn}^\aa(\aa)$, which, using \eqref{eq:magic-equality}, implies quasilinearity of the resolution of $M_{\geq\dd}$.  Conversely, building upon \cite[Thm.~2.9]{BES20}, a computation of $\HH_B^i(S)$ in Section~\ref{sec:qlin-to-reg} shows that the cokernel of a quasilinear resolution generated in degree $\dd$ is $\dd$-regular.  Thus we give a criterion for regularity in degree $\dd$ that is computable in practice.

Since free resolutions of $M_{\geq\dd}$ are virtual resolutions of $M$\!, the equality of Betti numbers from \eqref{eq:magic-equality} naturally suggests that the virtual resolutions exhibited in \cite[Thm.~2.9]{BES20} are precisely the minimal free resolutions of the truncations of $M$\!. We prove this to be true by establishing the following uniqueness theorem for virtual resolutions with terms in a full strong exceptional collection.

\begin{theoremalpha}\label{main-thm-uniqueness}
  Let $F_\b$ and $G_\b$ be virtual resolutions of an $S$-module. If $F_\b$ and $G_\b$ have no unit maps and their terms are direct sums of $S(-\aa)$ for $\zero\leq\aa\leq\nn$, then they are isomorphic.
\end{theoremalpha}

Our proof of Theorem~\ref{main-thm-uniqueness} reduces the problem to the uniqueness of quasi-isomorphic minimal projective complexes over certain graded associative algebras, using established techniques from the theory of derived categories in algebraic geometry and the representation theory of finite-dimensional algebras.

Finally, note that since a linear resolution is necessarily quasilinear, having a linear truncation at $\dd$ is strictly stronger than being $\dd$-regular. That is to say, when $\HH_B^0(M) = 0$:
\[
\parbox{12em}{\centering $M_{\geq\dd}$ has a linear resolution generated in degree $\dd$} \implies
\parbox{15em}{\centering $M_{\geq\dd}$ has a quasilinear resolution generated in degree $\dd$} \iff
\text{$M$ is $\dd$-regular.}
\]
Despite not fully characterizing multigraded regularity, having a linear resolution after truncation remains a useful condition. In Theorem~\ref{thm:sreg-iff-lin}, we use \eqref{eq:magic-equality} to get a cohomological characterization of when $M_{\geq\dd}$ has a linear resolution. Understanding the geometric implications of linearity on toric varieties is an active area of research \cite{BE24a,BE24b,BE25}.


\subsection{Betti Numbers and Multigraded Regularity}

Unlike in the case of a single projective space, the multigraded Betti numbers of a module $M$\!, that is $\beta_{i,\bb}(M)\coloneqq\Tor_i^S(M, \kk)_\bb$, do not determine its multigraded regularity. For instance, in Example~\ref{ex:same-betti} we construct two modules with the same multigraded Betti numbers but different multigraded regularities.  Hence the Betti numbers of $M$ also do not determine the Betti numbers of $M_{\geq\dd}$.  Still, we can intersect combinatorially defined regions $L_i(\bb)$ and $Q_i(\bb)$ (see Figure~\ref{fig:regions}) to specify a subset of the degrees $\dd\in\ZZ^r$ where $M_{\geq\dd}$ has a linear or quasilinear minimal free resolution generated in degree $\dd$.

\begin{figure}
  \newcommand{\makegrid}{
  \path[use as bounding box] (-2.45,-1.25) rectangle (4.45,4.25);
  \foreach \x in {-2,...,4}
  \foreach \y in {-1,...,4}
    { \fill[gray,fill=gray] (\x,\y) circle (1.5pt); }
  \draw[-,  semithick] (-2.5,0)--(4.5,0);
  \draw[-,  semithick] (0,-1.5)--(0,4.5);
}

\begin{tikzpicture}[scale=.42]
  \node at (1,5) (a) {\scriptsize $i=0$};
  \path[fill=PineGreen!45] (1,2)--(1,4)--(1,4)--(4,4)--(4,2)--(1,2);
  \makegrid

  \draw[->, ultra thick,PineGreen] (1,2)--(1,4);
  \draw[->, ultra thick,PineGreen] (1,2)--(4,2);
  \fill[TealBlue,fill=Black] (1,2) circle (6pt);
\end{tikzpicture}\quad\quad
\begin{tikzpicture}[scale=.42]
  \node at (1,5) (a) {\scriptsize $i=1$};
  \path[fill=PineGreen!45] (0,2)--(0,4)--(4,4)--(4,1)--(1,1)--(1,2)--(0,2);
  \makegrid

  \draw[->, ultra thick,PineGreen] (0,2)--(0,4);
  \draw[-, cap=round,ultra thick,PineGreen] (0,2)--(1,2);
  \draw[-, cap=round,ultra thick,PineGreen] (1,1)--(1,2);
  \draw[->, ultra thick,PineGreen] (1,1)--(4,1);
  \fill[TealBlue,fill=PineGreen] (0,2) circle (6pt);
  \fill[TealBlue,fill=PineGreen] (1,1) circle (6pt);
  \fill[TealBlue,fill=Black] (1,2) circle (6pt);
\end{tikzpicture}\quad\quad
\begin{tikzpicture}[scale=.42]
  \node at (1,5) (a) {\scriptsize $i=2$};
  \path[fill=PineGreen!45] (-1,2)--(-1,4)--(4,4)--(4,0)--(1,0)--(1,1)--(0,1)--(0,2)--(-1,2);
  \makegrid

  \draw[->, ultra thick,PineGreen] (-1,2)--(-1,4);
  \draw[-, cap=round,ultra thick,PineGreen] (-1,2)--(0,2);
  \draw[-, cap=round,ultra thick,PineGreen] (0,2)--(0,1);
  \draw[-, cap=round,ultra thick,PineGreen] (0,1)--(1,1);
  \draw[-, cap=round,ultra thick,PineGreen] (1,1)--(1,0);
  \draw[->, ultra thick,PineGreen] (1,0)--(4,0);
  \fill[TealBlue,fill=PineGreen] (-1,2) circle (6pt);
  \fill[TealBlue,fill=PineGreen] (0,1) circle (6pt);
  \fill[TealBlue,fill=PineGreen] (1,0) circle (6pt);
  \fill[TealBlue,fill=Black] (1,2) circle (6pt);
\end{tikzpicture}\quad\quad
\begin{tikzpicture}[scale=.42]
  \node at (1,5) (a) {\scriptsize $i=3$};
  \path[fill=PineGreen!45] (-2,2)--(-2,4)--(4,4)--(4,-1)--(1,-1)--(1,0)--(0,0)--(0,1)--(-1,1)--(-1,2)--(-2,2);
  \makegrid

  \draw[->, ultra thick,PineGreen] (-2,2)--(-2,4);
  \draw[-, cap=round,ultra thick,PineGreen] (-2,2)--(-1,2)--(-1,1)--(0,1)--(0,0)--(1,0)--(1,-1);
  \draw[->, ultra thick,PineGreen] (1,-1)--(4,-1);
  \fill[TealBlue,fill=PineGreen] (-2,2) circle (6pt);
  \fill[TealBlue,fill=PineGreen] (-1,1) circle (6pt);
  \fill[TealBlue,fill=PineGreen] (0,0) circle (6pt);
  \fill[TealBlue,fill=PineGreen] (1,-1) circle (6pt);
  \fill[TealBlue,fill=Black] (1,2) circle (6pt);
\end{tikzpicture}
\linebreak
\linebreak
\hspace*{0cm}
\begin{tikzpicture}[scale=.42]
  \path[fill=DarkOrchid!50] (1,2)--(1,4)--(4,4)--(4,2)--(1,2);
  \makegrid

  \draw[->, ultra thick,DarkOrchid] (1,2)--(1,4);
  \draw[->, ultra thick,DarkOrchid] (1,2)--(4,2);
  \fill[TealBlue,fill=DarkOrchid] (1,2) circle (6pt);
  \fill[TealBlue,fill=Black] (1,2) circle (6pt);
\end{tikzpicture}\quad\quad
\begin{tikzpicture}[scale=.42]
  \path[fill=DarkOrchid!50] (0,1)--(0,4)--(4,4)--(4,1)--(0,1);
  \makegrid

  \draw[->, ultra thick,DarkOrchid] (0,1)--(0,4);
  \draw[->, ultra thick,DarkOrchid] (0,1)--(4,1);
  \fill[TealBlue,fill=DarkOrchid] (0,1) circle (6pt);
  \fill[TealBlue,fill=Black] (1,2) circle (6pt);
\end{tikzpicture}\quad\quad
\begin{tikzpicture}[scale=.42]
  \path[fill=DarkOrchid!50] (-1,1)--(-1,4)--(4,4)--(4,0)--(0,0)--(0,1)--(-1,1);
  \makegrid

  \draw[->, ultra thick,DarkOrchid] (-1,1)--(-1,4);
  \draw[-, cap=round,ultra thick,DarkOrchid] (-1,1)--(0,1);
  \draw[-, cap=round,ultra thick,DarkOrchid] (0,1)--(0,0);
  \draw[->, ultra thick,DarkOrchid] (0,0)--(4,0);
  \fill[TealBlue,fill=DarkOrchid] (-1,1) circle (6pt);
  \fill[TealBlue,fill=DarkOrchid] (0,0) circle (6pt);
  \fill[TealBlue,fill=Black] (1,2) circle (6pt);
\end{tikzpicture}\quad\quad
\begin{tikzpicture}[scale=.42]
  \path[fill=DarkOrchid!50] (-2,1)--(-2,4)--(4,4)--(4,-1)--(0,-1)--(0,0)--(-1,0)--(-1,1)--(-2,1);
  \makegrid

  \draw[->, ultra thick,DarkOrchid] (-2,1)--(-2,4);
  \draw[-, cap=round,ultra thick,DarkOrchid] (-2,1)--(-1,1)--(-1,0)--(0,0)--(0,-1);
  \draw[->, ultra thick,DarkOrchid] (0,-1)--(4,-1);
  \fill[TealBlue,fill=DarkOrchid] (-2,1) circle (6pt);
  \fill[TealBlue,fill=DarkOrchid] (-1,0) circle (6pt);
  \fill[TealBlue,fill=DarkOrchid] (0,-1) circle (6pt);
  \fill[TealBlue,fill=Black] (1,2) circle (6pt);
\end{tikzpicture}
	\caption{The top row shows the regions $L_i(1,2)$ in green, and the bottom row $Q_i(1,2)$ in pink, for $i=0,1,2,3$, from left to right, as defined in Section~\ref{sec:regularity}.}\label{fig:regions}
\end{figure}

\begin{theoremalpha}\label{main-thm-betti}
  Let $M$ be a finitely generated $\ZZ^r$-graded $S$-module.
  \begin{enumerate}
  \item If $\displaystyle \dd\in\bigcap_{i\in\NN} \bigcap_{\bb\in\beta_i(M)} Q_i(\bb)$ then $M_{\geq \dd}$ has a quasilinear resolution generated in degree $\dd$.  \label{item:reg-bound}
  \item If $\displaystyle \dd\in\bigcap_{i\in\NN} \bigcap_{\bb\in\beta_i(M)} L_i(\bb)$ then $M_{\geq \dd}$ has a linear resolution generated in degree $\dd$.
  \end{enumerate}
  Here we set $\beta_i(M)\coloneqq\{\bb\in\ZZ^r \;|\; \Tor_i^S(M, \kk)_\bb\neq 0\}$ to be the degree support of $\Tor_i^S(M, \kk)$. 
\end{theoremalpha}

On a single projective space the regions $L_i(\bb)$ and $Q_i(\bb)$ coincide, so we recover condition \eqref{item:single-betti} of Eisenbud--Goto.  Our proof of Theorem~\ref{main-thm-betti} is based on the observation that we can construct a possibly nonminimal free resolution of $M_{\geq\dd}$ from the truncations of the terms in the minimal free resolution of $M$\!.

A number of inner\footnote{We use the terms inner and outer bound since in general there is no total ordering on $\reg(M)$ when $\Pic X\neq\ZZ$. For a single projective space an inner bound corresponds to an upper bound and an outer bound to a lower bound.} bounds on the multigraded regularity of a module in terms of its Betti numbers exist in the literature. For example, \cite[Cor.~7.3]{MS04} used a local cohomology long exact sequence argument to deduce such a bound. These methods were extended in \cite[Thm.~4.14]{BC17} using a local cohomology spectral sequence. Our bound in Theorem~\ref{main-thm-betti} is generally larger and thus closer to the actual regularity than these results.

Moreover, Theorem~\ref{main-thm-betti} is sharp in a number of examples.  For instance, we use Theorem~\ref{main-thm-reg} to show that the containment in \eqref{item:reg-bound} is equal to the regularity for all saturated \emph{ample} complete intersections, meaning those determined by ample hypersurfaces. 

\begin{theoremalpha}\label{main-thm-ci}
	Suppose $\langle f_1,\ldots,f_c\rangle\subset B$ is a saturated complete intersection of codimension $c$ in $S$, so the affine subvariety defined by it contains the irrelevant locus $V(B)$. Then
	\[ \reg\frac{S}{\langle f_1,\ldots,f_c\rangle} = Q_c\left(\sum_{i=1}^c\deg f_i\right). \]
\end{theoremalpha}

Note that on a product of projective spaces the intermediate cohomology of a complete intersection does not necessarily vanish. Even the local cohomology of a hypersurface in a product of projective spaces is not determined by its degree \cite[Sec.~4.5]{BC17}. Thus computing the multigraded regularity of complete intersections on products of projective spaces is more complicated than in the case of a single projective space.

Our work highlights a pattern in the literature: many classical results connecting the geometry of projective varieties and homological algebra do not have straightforward generalizations to other toric varieties. When such generalizations do exist they often highlight surprising and subtle connections between geometry and algebra. See for instance other work on multigraded regularity \cite{MS04,HW04,SVT04,SVTW06,Ha07,CMR07,BC17,CN20}, Tate resolutions \cite{EES15,BE26}, virtual resolutions \cite{BES20,Loper21,Yang21,BKLY21,HNVT22}, and syzygies \cite{HSS06,HS07,Hering10,Bruce22,Bruce24}.



\subsection*{Outline}

The organization of the paper is as follows:
Section~\ref{sec:background} gathers background results and fixes our notation.
Section~\ref{sec:virtual-res} proves the uniqueness of virtual resolutions consisting only of certain twists, in Theorem~\ref{thm:uniqueness}.
Section~\ref{sec:regularity-criterion} proves our main result, characterizing multigraded regularity in terms of quasilinearity of resolutions of truncations, in Theorem~\ref{thm:quasilinear-conjecture}. 
Section~\ref{sec:betti} relates multigraded regularity to Betti numbers, by describing the Betti numbers of truncations in Theorems~\ref{thm:betti-truncations} and \ref{thm:betti-regularity} and computing the multigraded regularity of a class of complete intersections in Theorem~\ref{thm:ci-regularity}.
Section~\ref{sec:linear-trun} sharpens our theorems in the case of linear truncations.
Finally, Section~\ref{sec:regularity-regions} summarizes our results about the regions defined by truncations, Betti numbers, and multigraded regularity.


\subsection*{Acknowledgments}

We thank Christine Berkesch, Daniel Erman, and Gregory G.~Smith for conversations which refined our understanding of the relationship between virtual resolutions and regularity, and David Eisenbud for drawing our attention to linear truncations. We are also grateful to Daniel Erman, Michael Brown, David Favero, and Peter Webb for their help in simplifying the proofs of Proposition~\ref{prop:free-monad} and Theorem~\ref{thm:uniqueness}, Monica Lewis for pointing out an improvement to Theorem~\ref{thm:ci-regularity}, as well as Michael Loper, Navid Nemati, Ben Briggs, and Felix K\"ung for helpful conversations. We thank the referees for their suggestions, which improved the presentation of our results. The computer software \textit{Macaulay2} \cite{M2} was vital in shaping our early conjectures.

The first author is grateful for the support of the Mathematical Sciences Research Institute in Berkeley, California, where she was in residence for the 2020--2021 academic year. The first author was partially supported by the National Science Foundation under Award Nos. DMS-1440140 and NSF MSPRF DMS-2002239. The third author was partially supported by the NSF grants DMS-1745638, DMS-2001101, and DMS-1502209.


\section{Notation and Background}\label{sec:background}

Throughout we denote the natural numbers by $\NN=\{0,1,2,\ldots\}$.  When referring to vectors in $\ZZ^r$ we use a bold font.  Given a vector $\vv = (v_1,\ldots,v_r)\in\ZZ^r$ we denote the sum $v_1+\cdots+v_r$ by $|\vv|$.  For $\vv,\ww\in\ZZ^r$ we write $\vv\leq\ww$ when $v_i\leq w_i$ for all $i$, and use $\max\{\vv,\ww\}$ to denote the vector whose $i$-th component is $\max\{v_i,w_i\}$.  We reserve $\ee_1,\ldots,\ee_r$ for the standard basis of $\ZZ^r$ and for brevity we write $\one$ for $(1,1,\ldots,1)\in\ZZ^r$ and $\zero$ for $(0,0,\ldots,0)\in\ZZ^r$\!.

Fix a Picard rank $r\in \NN$ and dimension vector $\nn = (n_1,\ldots,n_r)\in\NN^r$\!. We denote by $\PP\nn$ the product $\PP{n_1}\times\cdots\times\PP{n_r}$ of $r$ projective spaces over a field $\kk$. Given $\bb\in\ZZ^r$ we let
\[ \OO_{\PP\nn}(\bb) \coloneqq \pi_1^*\OO_{\PP{n_1}}(b_1)\otimes\cdots\otimes\pi_r^*\OO_{\PP{n_r}}(b_r) \]
where $\pi_i$ is the projection of $\PP\nn$ to $\PP{n_i}$\!. This gives an isomorphism $\Pic\PP\nn\cong\ZZ^r$\!, which we use implicitly throughout.

Let $S$ be the $\ZZ^r$-graded Cox ring of $\PP\nn$\!, which is isomorphic to the polynomial ring $\kk[x_{i,j} \; | \; 1 \leq i \leq r, \; 0 \leq j \leq n_i]$ with $\deg(x_{i,j})=\ee_i$. Further, let $B=\bigcap_{i=1}^r \langle x_{i,0},x_{i,1},\ldots,x_{i,n_i}\rangle\subset S$ be the irrelevant ideal. For a description of the Cox ring and the relationship between coherent $\OO_{\PP\nn}$-modules and $\ZZ^r$-graded $S$-modules, see \cites{Cox95,CLS2011}. In particular, the twisted global sections functor $\Gs$ given by $\cF\mapsto \bigoplus_{\pp\in\ZZ^r} \HH^0(\PP\nn\!, \cF(\pp))$ takes coherent sheaves on $\PP\nn$ to $S$-modules.  Given a $\ZZ^r$-graded $S$-module $M$\!, we write $\Tor_i^S(M,\kk)_\bb$ for the degree~$\bb$ part of the module $\Tor_i^S(M,\kk)$ and use $\beta_i(M)\coloneqq\{\bb\in\ZZ^r \;|\; \Tor_i^S(M, \kk)_\bb\neq 0\}$ to denote the set of multidegrees of $i$-th syzygies of $M$\!.


\subsection{Multigraded Regularity}\label{sec:regularity}

In order to streamline our definitions of regions inside the Picard group of $\PP\nn$\!, we introduce the following subsets of $\ZZ^r$: for $\dd\in\ZZ^r$ and $i\in\NN$ let
\begin{align*}
  L_i(\dd) &\coloneqq \bigcup_{|\lambda|=i}\left(\dd-\lambda_1\ee_1-\cdots-\lambda_r\ee_r+\NN^r\right)
  \quad\text{for $\lambda_1,\ldots,\lambda_r\in\NN$} \\
  Q_i(\dd) &\coloneqq L_{i-1}(\dd-\one) \quad \text{for $i>0$}\quad\text{and} \quad Q_0(\dd)=\dd+ \NN^r.
\end{align*}
Note that for fixed $\dd\in\ZZ^r$ we have $L_i(\dd)\subseteq Q_i(\dd)$ for all $i$.

\begin{example}
	When $r=2$ the regions $L_i(\dd)$ and $Q_i(\dd)$ can be visualized as in Figure~\ref{fig:regions}. For $i>1$ they are shaped like staircases with $i+1$ and $i$ ``corners,'' respectively; in other words $L_i(\dd)$ contains $i+1$ minimal elements and $Q_i(\dd)$ contains $i$.
\end{example}

\begin{remark}\label{rem:describeL}
	An alternate description of $L_i(\dd)$ will also be useful: it is the set of $\bb\in\ZZ^r$ so that the sum of the positive components of $\dd-\bb$ is at most $i$.  (This ensures that we can distribute the $\lambda_j$ so that $\bb+\sum_j\lambda_j\ee_j\geq \dd$.)
\end{remark}

With this notation in hand we can recall the definition of multigraded regularity.

\begin{definition}\label{def:regular}\cite[Def.~1.1]{MS04}
	Let $M$ be a finitely generated $\ZZ^r$-graded $S$-module. We say $M$ is \emph{$\dd$-regular} for $\dd\in\ZZ^r$ if the following hold:
	\begin{enumerate}
	\item  $\HH_B^0(M)_\pp=0$ for all $\pp\in\bigcup_{1\leq j\leq r}\left(\dd+\ee_j+\NN^r\right)$,
	\item  $\HH_B^i(M)_\pp=0$ for all $i>0$ and $\pp\in L_{i-1}(\dd)$. \label{item:reg-def}
	\end{enumerate}
	The \emph{multigraded Castelnuovo--Mumford regularity} of $M$ is then the set
	\[ \reg(M)\coloneqq \left\{\dd\in\ZZ^r \;\; \big| \;\; \text{$M$ is $\dd$-regular} \right\} \subseteq \Pic\PP\nn\cong\ZZ^r. \]
\end{definition}

It follows directly from the definition that if $M$ is $\dd$-regular, then $M$ is $\dd'$-regular for all $\dd'\geq\dd$.  For other properties of multigraded regularity, such as $\zero$-regularity of $S$, see \cite{MS04}.

\begin{remark}
  Several alternate notions of Castelnuovo--Mumford regularity for the multigraded setting exist in the literature. The initial extension was introduced by Hoffman and Wang for a product of two projective spaces \cite{HW04}. Following Maclagan and Smith's definition, Botbol and Chardin gave a more general definition working over an arbitrary base ring \cite{BC17}. Recently, in their work on Tate resolutions on toric varieties, Brown and Erman introduced a modified notion of multigraded regularity for a weighted projective space, which they then extended to other toric varieties \cite[\S6.1]{BE26}.
\end{remark}



\subsection{Truncations and Local Cohomology}\label{sec:truncations-local-coh}

In this section we collect facts about truncations and local cohomology that will be used repeatedly. As in the case of a single projective space, the truncation of a graded module on a product of projective spaces at multidegree $\dd$ contains all elements of degree at least $\dd$.

\begin{definition}
	For $\dd\in\ZZ^r$ and $M$ a $\ZZ^r$-graded $S$-module, the \emph{truncation} of $M$ at $\dd$ is the $\ZZ^{r}$-graded $S$-submodule $M_{\geq\dd} \coloneqq \bigoplus_{\dd'\geq\dd} M_{\dd'}$.
\end{definition}

Immediate from the definition is the following lemma.

\begin{lemma}\label{lem:truncation-functor}
The truncation map $M\mapsto M_{\geq \dd}$ is an exact functor of $\ZZ^r$-graded $S$-modules.
\end{lemma}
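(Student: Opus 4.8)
The plan is to unpack the three assertions bundled into the statement and verify each by working one multidegree at a time: that $M_{\geq\dd}$ is genuinely an $S$-submodule of $M$ (not merely a graded subgroup), that a graded morphism induces a graded morphism on truncations compatibly with identities and composition, and that the resulting functor carries short exact sequences to short exact sequences.

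First I would check that $M_{\geq\dd}$ is a submodule. Since $S$ is generated over $\kk$ by the variables $x_{i,j}$, each of degree $\ee_i\geq\zero$, multiplication by a homogeneous element of $S_\vv$ with $\vv\geq\zero$ sends $M_{\dd'}$ into $M_{\dd'+\vv}$, and $\dd'\geq\dd$ forces $\dd'+\vv\geq\dd$. Hence $M_{\geq\dd}=\bigoplus_{\dd'\geq\dd}M_{\dd'}$ is closed under the $S$-action. For functoriality, a morphism $f\colon M\to N$ of $\ZZ^r$-graded modules is homogeneous of degree $\zero$, so $f(M_{\dd'})\subseteq N_{\dd'}$ for every $\dd'$; restricting to the summands with $\dd'\geq\dd$ yields $f_{\geq\dd}\colon M_{\geq\dd}\to N_{\geq\dd}$. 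Since this restriction merely forgets the components in degrees not $\geq\dd$, it visibly preserves identities and composition.

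For exactness, the key observation is that a complex of $\ZZ^r$-graded $S$-modules is exact if and only if it is exact in each multidegree $\dd'$ separately, because homology of graded modules is computed degreewise. For $\dd'\geq\dd$ the degree-$\dd'$ component of $M_{\geq\dd}$ equals that of $M$, while for $\dd'\not\geq\dd$ it is zero; in either case exactness in degree $\dd'$ of an input sequence $0\to M'\to M\to M''\to 0$ is inherited by the truncated sequence. Equivalently, $(-)_{\geq\dd}$ factors as the exact "take graded pieces" equivalence, followed by restriction of the indexing poset $\ZZ^r$ to the up-set $\{\dd'\geq\dd\}$ (an exact operation on families of vector spaces), followed by reassembly.

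There is no genuine obstacle here; the statement really is immediate from the definitions, and the only point requiring a moment's attention is that $S$ has no generators of negative degree, which is exactly what makes $M_{\geq\dd}$ closed under multiplication and hence a submodule. I would keep the proof to one or two sentences in the final write-up.
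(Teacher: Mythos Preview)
Your proposal is correct and matches the paper's treatment: the paper states only that the lemma is ``immediate from the definition'' and gives no further argument, and what you have written is precisely the routine unpacking of that claim. Your final remark that this should be kept to one or two sentences is exactly right.
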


\begin{remark}
Since truncation is exact, if $F_{\b}$ is a graded free resolution of a module $M$ then the term by term truncation $(F_{\b})_{\geq \dd}$ is a resolution of $M_{\geq \dd}$. However, in general the truncation of a free module is not free, so $(F_{\b})_{\geq \dd}$ is generally not a free resolution of $M_{\geq \dd}$.
\end{remark}

We denote by $\HH_B^p(M)$ the $p$-th local cohomology of $M$ supported at the irrelevant ideal $B$. For $p>0$ and $\aa\in\ZZ^r$ there exist natural isomorphisms
\[ \HH^p\big(\PP\nn\!, \shM(\aa)\big) \cong \HH_B^{p+1}(M)_\aa, \]
and for $p=0$ there is a $\ZZ^r$-graded exact sequence
\begin{equation}\label{eq:gamma-star}
	\begin{tikzcd}
	0 \rar& \HH_B^0(M) \rar& M \rar& \Gs(\shM) \rar& \HH_B^1(M) \rar& 0.
	\end{tikzcd}
\end{equation}

An important tool for computing local cohomology is the local \v{C}ech complex
\[\begin{tikzcd}
  \cech^\b(B,M)\colon 0 \rar& M \rar& \bigoplus M[g_{i}^{-1}] \rar& \bigoplus M[g_{i}^{-1},g_{j}^{-1}] \rar& \cdots
\end{tikzcd}\]
where the $g_i$ range over the generators of $B$. We index the local \v{C}ech complex so that the summands of $\cech^p(B,M)$ are localizations of $M$ at $p$ distinct generators of $B$. Then we have
\[ \HH_B^p(M) \cong \HH^p(\cech^\b(B,M)). \]
See \cite{24hours} and \cite[\S9]{CLS2011} for more details.

Note that inverting a generator of $B$ inverts a variable from each factor of $\PP\nn$\!, so the distinguished open sets corresponding to the generators of $B$ form an affine cover $\mathfrak U_B$ of $\PP\nn$\!.  Denote by $\cech^\b(\mathfrak U_B, \cF)$ the \v{C}ech complex of a sheaf $\cF$ with respect to $\mathfrak U_B$:
\[\begin{tikzcd}
  \cech^\b(\mathfrak U_B,\cF)\colon 0 \rar& \bigoplus \cF|_{\mathfrak U_i} \rar& \bigoplus \cF|_{\mathfrak U_i\cap\mathfrak U_j} \rar& \cdots.
\end{tikzcd}\]

\begin{lemma}\label{lem:cech-horseshoe}
	Given a complex of graded $S$-modules \( L\to M\to N \) such that \( \tilde L\to\tilde M\to\tilde N \) is exact, the complex \( \cech^p(B, L) \to \cech^p(B,M) \to \cech^p(B,N) \) is exact for each $p\geq 0$.
\end{lemma}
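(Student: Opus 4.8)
The plan is to reduce the statement to the exactness of the stalk-wise localizations. Fix $p \geq 0$ and a $p$-element subset of the generators $\{g_{i_1},\ldots,g_{i_p}\}$ of $B$, and write $g = g_{i_1}\cdots g_{i_p}$. The corresponding summand of $\cech^p(B,-)$ is the localization $(-)[g^{-1}]$, so it suffices to show that $L[g^{-1}] \to M[g^{-1}] \to N[g^{-1}]$ is exact. Since localization at a multiplicative set is an exact functor on $S$-modules, exactness of this three-term complex is equivalent to the vanishing of the localized homology $\bigl(\ker(M\to N)/\operatorname{im}(L\to M)\bigr)[g^{-1}]$, i.e.\ to the statement that the homology module $H$ of $L\to M\to N$ at $M$ has $H[g^{-1}]=0$.

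The key step is to translate the hypothesis that $\tilde L\to\tilde M\to\tilde N$ is exact into the statement that $H$ is $B$-torsion, i.e.\ supported on $V(B)$. Indeed, applying the exact sheafification functor $(-)^\sim$ to $L\to M\to N$ and using that sheafification commutes with taking homology of a complex, we get $\tilde H = 0$; a finitely generated (or, more carefully, an arbitrary) graded $S$-module with vanishing sheafification is exactly one all of whose elements are annihilated by a power of $B$. Each generator $g$ of the relevant product of $p$ variables is a nonzerodivisor away from $V(B)$, but more to the point: since $H$ is $B$-torsion and $\sqrt{(g)} \supseteq$... here one must be slightly careful, because a single $g = g_{i_1}\cdots g_{i_p}$ need not have $V(g)\subseteq V(B)$. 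The correct observation is that each $g_{i_k}$ is a product of one variable from each of the $r$ factors, so $V(g_{i_k})$ meets $V(B)$ but $g_{i_k}$ still acts locally nilpotently on $H$: since $H = \Gamma_B(H)$ is supported at $V(B) = V(B^N)$ for the ideal generated by any fixed large power, and each $g_{i_k}$ lies in a minimal prime over... Rather than belabor this, the clean route is: $H$ is $B$-torsion, hence every homogeneous element of $H$ is killed by $B^N$ for some $N$; since $g \in B$ (as $g$ is a product of generators of $B$, each of which lies in $B$... in fact $g_{i_k}\in B$), we get $g^N \cdot h = 0$ for such $h$, so $h$ maps to $0$ in $H[g^{-1}]$. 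As $H$ is generated by its homogeneous elements, $H[g^{-1}]=0$.

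The main obstacle, and the only subtle point, is justifying that ``$\tilde H = 0$ implies $H$ is $B$-torsion,'' together with the bookkeeping that each $\cech$-summand is indeed a localization at an element of $B$. The first follows from the standard dictionary between $\ZZ^r$-graded $S$-modules and quasicoherent sheaves on $\PP\nn$ (see \cites{Cox95,CLS2011}): a graded module $H$ satisfies $\tilde H = 0$ if and only if $H = H^0_B(H)$, which for the Cox ring means every element is annihilated by a power of $B$. The second is immediate from the indexing convention stated just above: the summands of $\cech^p(B,M)$ are the $M[g_{i_1}^{-1},\ldots,g_{i_p}^{-1}] = M[(g_{i_1}\cdots g_{i_p})^{-1}]$, and $g_{i_1}\cdots g_{i_p} \in B$. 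One must also note that localization commutes with the differentials of the \v{C}ech complex in the obvious way, so the three-term complex of $p$-th \v{C}ech terms is the direct sum over $p$-subsets of the localized three-term complexes, each of which we have shown to be exact; a direct sum of exact complexes is exact, completing the proof.
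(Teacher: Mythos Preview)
Your argument is correct and follows essentially the same route as the paper's proof: both split $\cech^p(B,-)$ as a direct sum over $p$-element subsets of the generators of $B$ and reduce to checking exactness of each localized three-term complex $L[g^{-1}]\to M[g^{-1}]\to N[g^{-1}]$. The only difference is in how that final exactness is justified. The paper observes that for $p\geq 1$ each such localization is obtained by applying $\Gamma(U,-)$ to $\tilde L\to\tilde M\to\tilde N$ for the distinguished affine open $U$ corresponding to $g$, and then invokes exactness of global sections on an affine scheme. You instead argue that the homology $H$ of $L\to M\to N$ has $\tilde H=0$, hence is $B$-torsion, and since $g\in B$ (for $p\geq 1$) the localization $H[g^{-1}]$ vanishes. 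Both are valid and encode the same underlying fact; the paper's version is a one-line appeal to geometry, while yours stays on the module side and makes the $B$-torsion mechanism explicit. (Neither argument actually handles $p=0$, where no localization occurs and the empty product is $1\notin B$---but the lemma is only invoked for $p\geq 1$ in the paper.)
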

\begin{proof}
	Fix $p$.  Then $\cech^p(B,L)\to\cech^p(B,M)\to\cech^p(B,N)$ splits as a direct sum of complexes
	\[L[g_1^{-1},\ldots,g_p^{-1}]\to M[g_1^{-1},\ldots,g_p^{-1}]\to N[g_1^{-1},\ldots,g_p^{-1}]\]
	each of which can be obtained by applying $\Gamma(U,-)$ to $\tilde L\to\tilde M\to\tilde N$, where $U$ is the complement of $V(g_1,\ldots,g_p)$.  Since $U$ is affine they are exact.
\end{proof}

Since $M/M_{\geq\dd}$ is annihilated by a power of $B$, a module $M$ and its truncation define the same sheaf on $\PP\nn$\!. In particular $\HH_B^p(M) = \HH_B^p(M_{\geq\dd})$ for $p\geq 2$.  The long exact sequence of local cohomology applied to $0\to M_{\geq\dd}\to M\to M/M_{\geq\dd}\to 0$ gives
\[\begin{tikzcd}
0 \rar& \HH_B^0(M_{\geq\dd}) \rar& \HH_B^0(M) \rar& M/M_{\geq\dd} \rar& \HH_B^1(M_{\geq\dd}) \rar& \HH_B^1(M) \rar& 0.
\end{tikzcd}\]
Hence $\HH_B^0(M) = 0$ implies $\HH_B^0(M_{\geq\dd}) = 0$.  Since $M/M_{\geq\dd}$ is zero in degrees larger than $\dd$ we also have $\HH_B^1(M_{\geq\dd})_{\geq\dd} = \HH_B^1(M)_{\geq\dd}$. An immediate consequence is the following lemma, which we will use repeatedly to reduce to the case when $\dd=\zero$.

\begin{lemma}\label{lem:truncation-regularity}
A $\ZZ^{r}$-graded $S$-module $M$ is $\dd$-regular if and only if $M_{\geq\dd}$ is $\dd$-regular.
\end{lemma}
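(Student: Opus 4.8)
The plan is to verify that $M$ and $M_{\geq\dd}$ satisfy the two conditions of Definition~\ref{def:regular} in exactly the same degrees, using the comparison between $\HH_B^{\bullet}(M)$ and $\HH_B^{\bullet}(M_{\geq\dd})$ already assembled in the preceding paragraph. The key point is that every degree occurring in those conditions for the parameter $\dd$ lies in $\dd+\NN^r$, and on that region the relevant local cohomology of $M$ and $M_{\geq\dd}$ agrees.

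Concretely, I would first record that $\HH_B^p(M)=\HH_B^p(M_{\geq\dd})$ as $\ZZ^r$-graded modules for every $p\geq 2$, since $M$ and $M_{\geq\dd}$ define the same sheaf on $\PP\nn$. Then, because $(M/M_{\geq\dd})_{\pp}=0$ for all $\pp\geq\dd$, restricting the six-term exact sequence
\[ 0 \to \HH_B^0(M_{\geq\dd}) \to \HH_B^0(M) \to M/M_{\geq\dd} \to \HH_B^1(M_{\geq\dd}) \to \HH_B^1(M) \to 0 \]
to such degrees yields isomorphisms $\HH_B^0(M_{\geq\dd})_{\pp}\cong\HH_B^0(M)_{\pp}$ and $\HH_B^1(M_{\geq\dd})_{\pp}\cong\HH_B^1(M)_{\pp}$ for every $\pp\geq\dd$.

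Finally I would run through Definition~\ref{def:regular} condition by condition. Condition~(1) only constrains degrees in $\bigcup_{1\leq j\leq r}(\dd+\ee_j+\NN^r)\subseteq\dd+\NN^r$, and condition~(2) for $i=1$ only constrains degrees in $L_0(\dd)=\dd+\NN^r$, so both transfer between $M$ and $M_{\geq\dd}$ via the isomorphisms above; condition~(2) for $i\geq 2$ is literally the same statement for the two modules by the first observation. Combining these gives that $M$ is $\dd$-regular if and only if $M_{\geq\dd}$ is. There is no real obstacle here, since essentially everything has been prepared in the discussion above; the only thing to be careful about is the bookkeeping with the degree regions, namely confirming that all degrees appearing in the conditions with cohomological index $0$ or $1$ indeed lie in $\dd+\NN^r$.
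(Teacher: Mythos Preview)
Your proposal is correct and follows exactly the approach the paper intends: the lemma is stated there as an ``immediate consequence'' of the preceding paragraph, and you have simply spelled out the bookkeeping---namely that $\HH_B^p(M)=\HH_B^p(M_{\geq\dd})$ for $p\geq2$, that the six-term sequence gives matching $\HH_B^0$ and $\HH_B^1$ in degrees $\pp\geq\dd$, and that all degrees appearing in conditions~(1) and~(2) for $i=1$ lie in $\dd+\NN^r$. There is nothing to add.
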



\subsection{Koszul Complexes and Cotangent Sheaves}\label{sec:Koszul-cotangent}

For each factor $\PP{n_i}$ of $\PP\nn$\!, the Koszul complex on the variables of $S_i = \Cox\PP{n_i}$ is a resolution of $\kk$:
\begin{align}\label{eq:Koszul-on-factors}
	K^i_\b\colon 0 \gets S_i \gets S_i^{n_i+1}(-1)\gets\Alt^2 \left[S_i^{n_i+1}(-1)\right]\gets\cdots\gets\Alt^{n_i+1} \left[S_i^{n_i+1}(-1)\right]\gets 0.
\end{align}
The Koszul complex $K_\b$ on the variables of $S$ is the tensor product of the complexes $\pi_i^*K_\b^i$.

For $1\leq a\leq n_i$ let $\hat\Om_{\PP{n_i}}^a$ be the kernel of $\Alt^{a-1}\left[S_i^{n_i+1}(-1)\right]\gets\Alt^a\left[S_i^{n_i+1}(-1)\right]$ and let $\Om_{\PP{n_i}}^a$ denote its sheafification.  The minimal free resolution of $\hat\Om_{\PP{n_i}}^a$ then consists of the terms of $K_\b^i$ with homological index greater than $a$.  Write $\hat\Om_{\PP{n_i}}^0$ for the kernel of $\kk\gets S_i$ (so that $\Om_{\PP{n_i}}^0=\OO_{\PP{n_i}}$) and take $\hat\Om_{\PP{n_i}}^a$ to be 0 if $a\notin\{0,\ldots,n_i\}$.  For $\aa\in\ZZ^r$ with $\zero\leq\aa\leq\nn$ define
\[ \Om_{\PP\nn}^\aa \coloneqq\pi_1^*\Om_{\PP{n_1}}^{a_1}\otimes\cdots\otimes\pi_r^*\Om_{\PP{n_r}}^{a_r} \]
and write $\hat\Om_{\PP\nn}^\aa$ for the analogous tensor product of the modules $\hat\Om_{\PP{n_i}}^{a_i}$.

Given a free complex $F_\b$ and a multidegree $\aa\in\ZZ^r$\!, denote by $F_\b^{\leq\aa}$ the subcomplex of $F_\b$ consisting of free summands generated in degrees at most $\aa$.

\begin{lemma}\label{lem:finite-length-homology}
	Fix $\aa\in\ZZ^r$ and let $K_\b$ be the Koszul complex on the variables of $S$.  The subcomplex $K_\b^{\leq\aa}$ is equal to $K_\b$ in degrees $\leq\aa$, and its sheafification is exact except at homological index $|\aa|$, where it has homology $\Om_{\PP\nn}^\aa$.
\end{lemma}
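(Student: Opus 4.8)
The plan is to reduce to a single projective-space factor at a time using the product structure of the Koszul complex, and then reassemble with a Künneth argument after sheafifying. The two elementary assertions are immediate: a degree-zero map $S(-\bb)\to S(-\cc)$ of graded modules is multiplication by an element of $S_{\bb-\cc}$, so it vanishes unless $\cc\leq\bb$. Hence the differential of $K_\b$ never raises the generating degree of a free summand, which makes $K_\b^{\leq\aa}$ an actual subcomplex; and for the same reason the only summands $S(-\cc)$ of a term $K_p$ that are nonzero in a degree $\bb\leq\aa$ are those with $\cc\leq\bb\leq\aa$, which are exactly the ones retained in $K_\b^{\leq\aa}$. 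So $K_\b^{\leq\aa}$ agrees with $K_\b$ in all degrees $\leq\aa$.

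For the homology I would work factor by factor. The $p$-th term $K^i_p$ of the one-factor Koszul complex $K^i_\b$ is free and generated in degree $p$, so $(K^i_\b)^{\leq a_i}$ is precisely the brutal truncation of $K^i_\b$ to homological degrees $0,\dots,a_i$; comparing summands then gives $K_\b^{\leq\aa}=\bigotimes_{i=1}^r\pi_i^*(K^i_\b)^{\leq a_i}$, since the summand of $K_p$ indexed by $\mathbf j$ with $|\mathbf j|=p$ is generated in degree $\mathbf j$ and hence kept precisely when $j_i\leq a_i$ for all $i$. Now sheafify. As $\tilde\kk=0$, the complex $\widetilde{K^i_\b}$ is a bounded exact complex of locally free sheaves, so (for $0\leq a_i\leq n_i$) its brutal truncation $\widetilde{(K^i_\b)^{\leq a_i}}$ is exact in homological degrees below $a_i$, is zero above $a_i$, and has homology $\operatorname{im}\bigl(\widetilde{K^i_{a_i+1}}\to\widetilde{K^i_{a_i}}\bigr)=\Om_{\PP{n_i}}^{a_i}$ in degree $a_i$. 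In particular the inclusion of its top cycles is a quasi-isomorphism $\Om_{\PP{n_i}}^{a_i}[a_i]\hookrightarrow\widetilde{(K^i_\b)^{\leq a_i}}$. (The degenerate cases $a_i<0$ and $a_i>n_i$ give the zero complex and all of $\widetilde{K^i_\b}$ respectively, both consistent with $\Om_{\PP{n_i}}^{a_i}=0$.)

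Finally, because sheafification is exact and takes $\otimes_S$ and the pullbacks $\pi_i^*$ to their sheaf-theoretic counterparts, $\widetilde{K_\b^{\leq\aa}}\cong\bigotimes_{i=1}^r\pi_i^*\widetilde{(K^i_\b)^{\leq a_i}}$, a tensor product of bounded complexes of locally free sheaves. Replacing each factor by the quasi-isomorphic $\pi_i^*\Om_{\PP{n_i}}^{a_i}[a_i]$ — valid because tensoring with a bounded complex of locally free sheaves preserves quasi-isomorphisms — yields
\[
  \widetilde{K_\b^{\leq\aa}}\;\simeq\;\bigotimes_{i=1}^r\pi_i^*\Om_{\PP{n_i}}^{a_i}[a_i]\;=\;\Bigl(\bigotimes_{i=1}^r\pi_i^*\Om_{\PP{n_i}}^{a_i}\Bigr)\bigl[\,|\aa|\,\bigr]\;=\;\Om_{\PP\nn}^{\aa}\bigl[\,|\aa|\,\bigr],
\]
so $\widetilde{K_\b^{\leq\aa}}$ is exact except in homological degree $|\aa|$, where its homology is $\Om_{\PP\nn}^{\aa}$, as claimed.

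I expect the reassembly in the last step to be the only place needing genuine care: one must check that sheafification really is compatible with the tensor-and-pullback decomposition of $K_\b$, and that the Künneth step contributes no correction terms. The latter holds precisely because, after sheafifying, every factor is a complex of locally free sheaves with locally free homology, so there is no room for $\Tor$ contributions — in contrast to $K_\b^{\leq\aa}$ over $S$ itself, whose homology is spread over many degrees and collapses to the stated answer only after sheafification.
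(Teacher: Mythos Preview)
Your argument is correct and follows essentially the same route as the paper: both decompose $K_\b^{\leq\aa}$ as the tensor product $\bigotimes_i \pi_i^*(K^i_\b)^{\leq a_i}$, observe that after sheafification each factor has homology $\pi_i^*\Om_{\PP{n_i}}^{a_i}$ concentrated in index $a_i$, and then reassemble. You are more explicit than the paper about the K\"unneth step (justifying the tensor via quasi-isomorphisms with locally free complexes) and about the degenerate cases $a_i<0$ or $a_i>n_i$, but the underlying strategy is the same.
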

\begin{proof}
	The first statement follows from the fact that the terms appearing in $K_\b$ but not $K_\b^{\leq\aa}$ have no elements in degrees $\leq\aa$.

	Note that $K_\b^{\leq\aa}$ is a tensor product of pullbacks of subcomplexes of the $K^i_\b$ in \eqref{eq:Koszul-on-factors}:
	\[ K_\b^{\leq\aa}=\pi_1^*(K_\b^1)^{\leq a_1}\otimes\cdots\otimes\pi_r^*(K_\b^r)^{\leq a_r}. \]
	After sheafification, each complex $\pi_i^*(K_\b^i)^{\leq a_i}$ is exact away from its kernel $\pi_i^*\Om_{\PP{n_i}}^{a_i}$, which appears at homological index $a_i$.  Thus $\tilde K_\b^{\leq\aa}$ has homology $\Om_{\PP\nn}^\aa$, appearing in index $|\aa|$.
\end{proof}


\section{Uniqueness of Virtual Resolutions}\label{sec:virtual-res}

In this section we characterize a class of \emph{virtual resolutions}, as defined by Berkesch, Erman, and Smith, for graded modules over the Cox ring $S$ of a product of projective spaces $\PP\nn$\!,\linebreak which consist only of certain twists depending on the dimension vector $\nn$. We will prove in Theorem~\ref{thm:uniqueness} that such virtual resolutions are unique up to isomorphism of complexes. In particular, Proposition~\ref{prop:free-monad} uses the Fourier--Mukai transform from \cite[Thm.~2.9]{BES20} to construct a \emph{free monad} for $M$\!, which will be a virtual resolution of this form when $M$ is $\zero$-regular. In Section~\ref{sec:reg-to-qlin} we will use our uniqueness theorem to prove that this virtual resolution is isomorphic to the minimal free resolution of the truncation $M_{\geq\dd}$.

While a minimal free resolution $F_\b$ of an $S$-module $M$ can be easily computed using Gr\"obner methods whenever $S$ is the Cox ring of a smooth projective toric variety $X$\!, the complex $F_\b$ does not provide a faithful reflection of the geometry of $\shM$ even in the case $X = \PP\nn$\!. For example, when the Picard rank of $X$ is greater than one, the length of $F_\b$ generally exceeds $\dim X$\!.\linebreak To bridge this gap, Berkesch, Erman, and Smith introduced \emph{virtual resolutions} in \cite{BES20}.

\begin{definition}
  A $\Pic(X)$-graded complex of free $S$-modules $G_\b$ is a \emph{virtual resolution} of the module $M$ if the complex $\tilde G_\b$ of locally free sheaves on $X$ is a resolution of the sheaf $\shM$\!.
\end{definition}

Virtual resolutions have advantages over ordinary free resolutions, for instance in achieving better analogues of Hilbert's syzygy theorem (e.g.~\cites{Yang21,HNVT22}), reflecting the vanishing of sheaf $\Tor$ and $\Ext$ \cite[Cor. 5.2 and~5.8]{BKLY21}, and distinguishing sheaves represented by the same module over toric varieties with the same Cox ring. However, virtual resolutions are often less rigid: there are many non-isomorphic virtual resolutions for each module $M$\!.

We are particularly interested in virtual resolutions arising from a Fourier--Mukai functor whose output is a complex concentrated from homological index $-\!\dim X$ to $\dim X$\!. Following \cite[\S8]{EES15}, we define a \emph{free monad} of a coherent sheaf $\cF$ to be a finite complex
\[ \cL\colon 0\gets\cL_{-s}\gets\cdots\gets\cL_{-1}\gets\cL_0\gets\cL_1\gets\cdots\cL_t\gets0 \]
whose terms are direct sums of line bundles and whose homology is $\HH_\b(\cL) = \HH_0(\cL) \simeq \cF$\!. Though there are many references on Fourier--Mukai transforms as functors between derived categories, for our purposes we need a rigorous construction of a specific representative of the output complex up to homotopy. Thus we include in Appendix~\ref{sec:appendix} the details of the Beilinson spectral sequence behind the construction below.


\begin{proposition}\label{prop:free-monad}
  Let $M$ be a finitely generated $\ZZ^r$-graded $S$-module.  There is a free monad $\cL$ for $\shM$ with terms
  \( \displaystyle \cL_k = \bigoplus_{|\aa|\geq k} \OO_{\PP\nn}(-\aa)\otimes\HH^{|\aa|-k}(\PP\nn\!, \shM\otimes\Om_{\PP\nn}^\aa(\aa)) \)
  so that
  \begin{enumerate}
  \item the free complex $G_\b = \Gs(\cL)$ has Betti numbers $\beta_{k,\aa}(G_\b) = h^{|\aa|-k}\big(\PP\nn\!, \shM\otimes\Om_{\PP\nn}^\aa(\aa)\big)$;
  \item\label{item:virtual-resolution} if $\HH^i\big(\PP\nn\!, \shM\otimes\Om_{\PP\nn}^{\aa}(\aa)\big) = 0$ for $i>|\aa|$ then $G_\b$ is a virtual resolution for $M$ whose differentials have no unit maps.
  \end{enumerate}
\end{proposition}
\begin{proof}
  Let $\cK$ be the resolution of the diagonal from \eqref{eq:res-o'diag-terms} and let $\Phi_\cK$ be the corresponding Fourier--Mukai transform. The first page $E_1^{-s,t}$ of the Beilinson spectral sequence after taking homology vertically is:
  \begin{equation}\label{eq:derived-E1}
    \begin{tikzcd}[column sep=small, row sep=tiny]
	{} & \vdots & \vdots & \vdots & {} \\[-5pt]
	{} & {\R^2p_*(q^*\shM\otimes\cK_0)} & {\R^2p_*(q^*\shM\otimes\cK_1)} & {\R^2p_*(q^*\shM\otimes\cK_2)} & \cdots \\
	{} & {\R^1p_*(q^*\shM\otimes\cK_0)} & {\R^1p_*(q^*\shM\otimes\cK_1)} & \dao{\R^1p_*(q^*\shM\otimes\cK_2)} & \cdots \\
	{} & \PG{ p_*(q^*\shM\otimes\cK_0)} & \PG{ p_*(q^*\shM\otimes\cK_1)} & \PG{ p_*(q^*\shM\otimes\cK_2)} & \cdots \\
	{} & {} & {}
	\arrow[from=4-4, to=4-3]
	\arrow[from=4-3, to=4-2]
	\arrow[from=4-5, to=4-4]
	\arrow[from=3-4, to=3-3]
	\arrow[from=3-5, to=3-4]
	\arrow[from=2-5, to=2-4]
	\arrow[from=2-4, to=2-3]
	\arrow[from=2-3, to=2-2]
	\arrow[from=3-3, to=3-2]
  \arrow[crossing over, shift left=16, shorten <=-28pt, shorten >=-18pt, dashed, no head, from=4-2, to=1-2]
	\arrow[shift right=5, shorten <=-87pt, shorten >=-20pt, dashed, no head, from=4-2, to=4-5]
	\arrow["{k=2}"{description, sloped, pos=0.70}, shift left=4, shorten >=70pt, dotted, no head, from=3-5, to=5-3]
	\arrow["{k=1}"{description, sloped, pos=0.81}, shift left=4, shorten >=70pt, dotted, no head, from=2-5, to=5-2]
	\arrow["{k=0}"{description, sloped, pos=0.95}, shift left=1, shorten >=30pt, dotted, no head, from=1-5, to=5-1]
	\arrow[shift right=2, shorten <=25pt, shorten >=23pt, dotted, no head, from=1-3, to=3-1]
	\arrow[shift right=2, shorten <=25pt, shorten >=23pt, dotted, no head, from=1-4, to=4-1]
    \end{tikzcd}
  \end{equation}

  The vertical complexes of $E_0$ in \eqref{eq:beilinson-E0} are sheaves tensored with complexes of vector spaces that are global sections of \v{C}ech complexes, so they satisfy the splitting hypotheses of \cite[Lem.~3.5]{EFS03}, which implies that the total complex of $E_0$ is homotopy equivalent to a complex $\cL$ with terms \( \cL_k = \bigoplus_{s-t = k} E_1^{-s,t} \). Hence
  \[ \cL\sim\Tot(E_0) = \Phi_\cK(\shM)\sim\shM. \]
  Since the terms of $E_1$ are direct sums of line bundles, the complex $\cL$ is a free monad for $\shM$\!.

  Observe that the only terms with twist $\aa$ appear in $\cK_s$ for $s=|\aa|$ and that the Betti numbers in homological index $k$ come from the higher direct images $E_1^{-s,t}$ on diagonals with $s-t=k$. Hence \( \beta_{k,\aa}(G_\b) \) is the rank of $\OO_{\PP\nn}(-\aa)$ in \( E_1^{-|\aa|,|\aa|-k} \) which is \( h^{|\aa|-k}(\PP\nn\!, \shM\otimes\Om^\aa(\aa)) \).

  Lastly, note that the hypothesis of part \eqref{item:virtual-resolution} implies that the terms of \eqref{eq:derived-E1} on diagonals with $k<0$ vanish; hence the free monad $\cL$ is a locally free resolution. Since each map in the construction from \cite[Lem.~3.5]{EFS03} increases the value of $-s$, the differentials in $G_\b$ have no degree $\zero$ components.
\end{proof}

\begin{remark}\label{rem:BES-1.2}
  In the proof of \cite[Prop.~1.2]{BES20}, Berkesch, Erman, and Smith show that if $M$ is sufficiently twisted so that all higher direct images of $\shM\otimes\Om_{\PP\nn}^\aa(\aa)$ vanish, then the $E_1$ page will be concentrated in one row, which results in a linear virtual resolution. Similarly in \cite[Prop.~1.7]{EES15}, Eisenbud, Erman, and Schreyer prove that for sufficiently positive twists, the truncation of $M$ has a linear free resolution. However, in both cases the positivity condition is stronger than $\zero$-regularity for $M$\!, as illustrated by the following example.
\end{remark}

\begin{example}
	Write $S = \kk[x_0,x_1,y_0,y_1,y_2]$ for the Cox ring of $\PP1\times\PP2$ and consider the ideal $I = \langle y_0 + y_1 + y_2, x_1 y_2 \rangle$. Then $M = S/I$ is a bigraded, $(0,0)$-regular $S$-module. The global sections of the Beilinson spectral sequence for $\shM$ has first page
  \[\begin{tikzcd}[row sep=small]
	0 && 0 & \dao{S(-1,-1)} && \dao{S(-1,-2)} & {0} \\
	\PG{S} && \PG{S(0,-1)} & 0 && 0 & 0
	\arrow["{y_0+y_1+y_2}"', from=1-6, to=1-4]
	\arrow[from=2-4, to=2-3]
	\arrow[from=1-4, to=1-3]
	\arrow[from=2-6, to=2-4]
	\arrow[from=1-7, to=1-6]
	\arrow[from=2-7, to=2-6]
	\arrow["{-x_1y_2}"{description}, dotted, from=1-6, to=2-3]
	\arrow[from=1-3, to=1-1]
	\arrow["{y_0+y_1+y_2}", from=2-3, to=2-1]
	\arrow["{x_1y_2}"{description}, dotted, from=1-4, to=2-1]
	\arrow[shift left=4, shorten =-24pt, dashed, no head, from=2-1, to=1-1]
	\arrow[shift right=5, shorten =-21pt, dashed, no head, from=2-1, to=2-7]
  \end{tikzcd}\]
  where the dotted diagonal maps are lifts of maps from the second page of the spectral sequence, which agree with the maps from \cite[Lem.~3.5]{EFS03}.
\end{example}

In the next section we state and prove Theorem~\ref{main-thm-reg} by making explicit the restrictions on the virtual resolution above that follow from the regularity of $\shM$ and using them to bound the shape of the minimal free resolution of a truncation of $M$\!. In a sense, we will characterize $\dd$-regularity by showing that this virtual resolution is isomorphic to the minimal free resolution of $M_{\geq\dd}$. To do so we will need the following uniqueness theorem for virtual resolutions which consist only of twists in Beilinson's exceptional collection (see Example~\ref{ex:hyperelliptic-curve}).

\begin{theorem}\label{thm:uniqueness}
  Let $F_\b$ and $G_\b$ be virtual resolutions of an $S$-module. If $F_\b$ and $G_\b$ have no unit maps and their terms are direct sums of $S(-\aa)$ for $\zero\leq\aa\leq\nn$, then they are isomorphic.
\end{theorem}

The proof uses the tilting bundle $\cE \coloneq \bigoplus_{\aa=\zero}^\nn \OO(-\aa)$ from Beilinson's exceptional collection and the induced equivalence of categories described in Section~\ref{sec:beilinson-quiver} to reduce the question to the uniqueness of minimal projective resolutions over the endomorphism algebra $A = \End(\cE)$. Another ingredient of the proof is that the functor $\Hom(\cE,-)$ is exact on the class of free monads constructed in Proposition~\ref{prop:free-monad}, and moreover that we may apply the functor $\RHom(\cE,-)$ from~\eqref{eq:RHom} term-wise on locally free resolutions in this class to yield projective resolutions. See \cite[\S2.5]{Weibel1994} for terminology on derived categories and acyclic classes.

\begin{lemma}\label{lem:Hom-acyclic}
 Let $\cL$ be a complex whose terms consist of summands of $\cE$. Then
 \begin{enumerate}
 \item\label{item:free-monad} if $\cL$ is a free monad then $\Hom(\cE,\cL) = \RHom(\cE,\cL) \sim \RHom(\cE, \HH_0(\cL))$;
 \item\label{item:resolution} if $\cL$ is a resolution then $\Hom(\cE,\cL)$ is a projective resolution of an $A$-module.
 \end{enumerate}
\end{lemma}
\begin{proof}
  Since summands of $\cE$ form a strong exceptional sequence and each $\cL_j$ consists of summands of $\cE$ we have $\R^i\Hom(\cE,\cL_j) = 0$ for $i\neq0$. Hence $\cL$ is $F$-acyclic for $F=\Hom(\cE,-)$ and the hypercohomology spectral sequence
  \[ E^{-j,i} = \R^i\Hom(\cE,\cL_j) \Rightarrow \RHom(\cE,\cL) \]
  degenerates on the first page to $\Hom(\cE,\cL)$. Moreover, since $\RHom$ preserves quasi-isomorphisms
  \[ \Hom(\cE,\cL) = \RHom(\cE,\cL) \sim \RHom(\cE,\HH_0(\cL)). \]
  Thus we can apply the functor $\RHom(\cE,-)$ from~\eqref{eq:RHom} by applying $\Hom(\cE,-)$ on such free monads term-wise.

  For the second part, recall that each $\RHom(\cE,\cL_j)$ is a projective $A$-module and suppose the free monad $\cL$ has homology $\HH_\b(\cL) = \HH_0(\cL) = \cG$. Thus the projective complex $\Hom(\cE,\cL)$ is quasi-isomorphic to an $A$-module if
  \[ \HH_i(\Hom(\cE,\cL)) = \R^i\Hom(\cE,\cG) = 0 \text{ for } i\neq0.\]
  To see that this vanishing holds when $\cL$ is a locally free resolution we induct on its length: the long exact sequence for $\Hom(\cE,-)$ applied to the length 1 resolution
  \[ 0 \gets \cG \gets \cL_0 \gets \cL_1 \gets 0 \]
  implies that $\R^i\Hom(\cE,\cG) = \R^{i+1}\Hom(\cE,\cL_1) = 0$ for $i\neq0$. If $\cL$ has length $\ell$, break it into a short exact sequence and a length $\ell-1$ resolution
 \[ 0 \gets \cG \gets \cL_0 \gets \cG' \gets 0 \quad \text{and} \quad 0 \gets \cG' \gets \cL_1 \gets \cdots \gets \cL_\ell \gets 0. \]
 Then by the inductive hypothesis $\R^i\Hom(\cE,\cG')=0$ for $i\neq0$, so $\R^i\Hom(\cE,\cG)=0$ for $i\neq0$ using the short exact sequence. Thus when $\cL$ is a locally free resolution as above, $\Hom(\cE,\cL) \sim \RHom(\cE,\cG)$ is a projective resolution of an $A$-module.
\end{proof}


\begin{proof}[Proof of Theorem~\ref{thm:uniqueness}]
  Consider the (non-commutative) diagram
  \[\begin{tikzcd}[column sep = large]
    {\mathrm{Ch}^\b(\Coh(X))} & {\mathrm{Ch}^\b(\mathrm{mod-}A)} \\
	  {                \Db{X}}  &            {\Db{\mathrm{mod-}A}}.
	  \arrow[from=1-1, to=2-1]
	  \arrow["{\RHom(\cE,-)}", from=2-1, to=2-2]
	  \arrow["{\Hom(\cE,-)}", from=1-1, to=1-2]
	  \arrow[from=1-2, to=2-2]
  \end{tikzcd}\]
  Using Lemma~\ref{lem:Hom-acyclic}\eqref{item:free-monad} this diagram commutes for free monads in $\mathrm{Ch}^\b(\Coh(X))$; hence we may compute the functor $\RHom(\cE,-)$ from~\eqref{eq:RHom} by applying $\Hom(\cE,-)$ to $\tilde F_\b$ and $\tilde G_\b$. This yields quasi-isomorphic minimal complexes of $A$-modules $C_\b$ and $D_\b$, respectively, which by Lemma~\ref{lem:Hom-acyclic}\eqref{item:resolution} are projective.  Hence derived maps in $\Db{\mathrm{mod-}A}$ between $C_\b$ and $D_\b$ lift to maps of complexes. 

  Thus there are quasi-isomorphisms $f\colon C_\b\to D_\b$ and $g\colon D_\b\to C_\b$ with chain homotopies
  \[ fg - \mathrm{id}_D = d_Dh + hd_D \quad \text{and} \quad gf - \mathrm{id}_C = d_Ch' + h'd_C.\]
  Since the differential $d_D$ is minimal, $fg - \mathrm{id}_D = d_Dh + hd_D$ is minimal, so an application of Nakayama's lemma \cite[Lem.~3.2.4]{DW2017} shows that $fg$ is surjective. By the analogous argument on the other chain homotopy $gf$ is also surjective. Since a surjection of finite-dimensional vector spaces is injective, both $fg$ and $gf$, and hence both $f$ and $g$, are isomorphisms.

  Since $C_\b$ and $D_\b$ are projective, applying the reverse equivalence $-\Lotimes\cE$ to $f$ gives an isomorphism of complexes of $\OO_X$-modules $\tilde F_\b\to\tilde G_\b$ as desired. Finally, since $F_\b$ and $G_\b$ are free complexes, applying the twisted global sections functor $\Gs$ yields an isomorphism $F_\b \to G_\b$.
\end{proof}

\begin{remark}
  Note that Lemma~\ref{lem:Hom-acyclic} holds in more generality for any triangulated category and any additive left exact functor $F$ such that $\R^iF(\cE)=0$ for $i>0$, and the equivalence of categories applies to any full strong exceptional collection of line bundles on a toric variety. In particular, while general free monads are not sent to projective monads under the functor~\eqref{eq:RHom}, a similar uniqueness theorem holds for free monads consisting of the summands of $\cE\otimes\OO(-\dd)$ for any $\dd\in\Pic X$\!. We will show that if $\dd\in\reg(M)$ then both the Fourier--Mukai transform of $\shM$ and the minimal free resolution of $M_{\geq\dd}$ satisfy this condition.
\end{remark}


\section{A Criterion for Multigraded Regularity}\label{sec:regularity-criterion}

To investigate the relationship between multigraded regularity and resolutions of truncations we first need to establish a definition of linearity for a multigraded resolution.  We would like the differentials to be given by matrices with entries of total degree at most 1.  However, we will examine only the twists in the resolution, requiring that they lie in the $L$ regions from Section~\ref{sec:regularity}.  In particular, we will identify a complex with adjacent terms differing in degree by more than one as nonlinear even if the map between these terms is zero.

\begin{definition}
	Let $F_\b$ be a $\ZZ^r$-graded free resolution.  We say $F_\b$ is \emph{linear} if $F_0$ is generated in a single multidegree $\dd$ and the twists appearing in $F_j$ lie in $L_j(-\dd)$.
\end{definition}

We require $F_0$ to be generated in a single degree so that the truncation of a module with a linear resolution also has a linear resolution (see Proposition~\ref{prop:invariance}). Otherwise, for instance, the minimal free resolution of $M$ in the following example would be considered linear, yet the free resolution of its truncation $M_{\geq(1,0)}$ would not.

\begin{example}\label{ex:not-linear}
	Write $S = \kk[x_0,x_1,y_0,y_1]$ for the Cox ring of $\PP1\times\PP1$ and let $M$ be the module with resolution $S(0,-1)^2\oplus S(-1,0)^2\gets S(-1,-1)^4\gets 0$ given by the presentation matrix
	\[\begin{bmatrix}
		 x_0 &  x_1 &  0   &  0   \\
		 0   &  0   &  x_1 &  x_0 \\
		-y_0 &  0   & -y_0 &  0   \\
		 0   & -y_1 &  0   & -y_1
	\end{bmatrix}.\]
	A \textit{Macaulay2} computation shows that $M$ is $(1,0)$-regular. However, the minimal graded free resolution of the truncation $M_{\geq(1,0)}$ is
	\[\begin{tikzcd}
	0&\lar S(-1,0)^2&\lar S(-2,-1)^2&\lar 0
	\end{tikzcd}\]
	which is not linear because $(-2,-1)\notin L_1(-1,0)$.
\end{example}

This example shows that a module can be $\dd$-regular yet have a nonlinear resolution for $M_{\geq\dd}$.  Thus in order to characterize regularity in terms of truncations we need to weaken the definition of linearity.  We will use the larger $Q$ regions from Section~\ref{sec:regularity} in order to allow some maps of higher degree.

\begin{definition}\label{def:quasilinear-resolution}
	Let $F_\b$ be a $\ZZ^r$-graded free resolution. We say $F_\b$ is \emph{quasilinear} if $F_0$ is generated in a single multidegree $\dd$ and for each $j$ the twists appearing in $F_j$ lie in $Q_j(-\dd)$.
\end{definition}

\begin{example}
  Unlike on a single projective space, the resolution of $S/B$ for the irrelevant ideal $B$ on a product of projective spaces is not linear.  However it is quasilinear.  On $\PP1\times\PP2$\!, for instance, $S/B$ has resolution
	\[\begin{tikzcd}[column sep=2em]
	0 & \lar S & \lar S(-1,-1)^6 & \lar
	  \begin{matrix}
	    S(-1,-2)^6\\[-3pt]
	    \oplus \\[-3pt]
	    S(-2,-1)^3
	  \end{matrix}
	  &
	  \lar
	  \begin{matrix}
	    S(-1,-3)^2\\[-3pt]
	    \oplus \\[-3pt]
	    S(-2,-2)^3
	  \end{matrix}
	  &\lar
	  S(-2,-3)
	  & \lar 0,
	\end{tikzcd}\]
  which has generators in degree $(0,0)$ and relations in degree $(1,1)$.  Thus the resolution is not linear, since $(-1,-1)\notin L_1(0,0)$.  However $(-1,-1)\in Q_1(0,0)$ is compatible with quasilinearity.
\end{example}

This condition is inspired by \cite[Thm.~2.9]{BES20}, which characterized regularity in terms of the existence of virtual resolutions with Betti numbers similar to those of $S/B$---see Section~\ref{sec:qlin-to-reg} for a more complete discussion. Note that both linearity and quasilinearity reduce to the standard definition of linearity on a single projective space. As one might expect from that setting, they satisfy the property below, which will be proven as a corollary of Theorems \ref{thm:betti-truncations} and \ref{thm:betti-regularity} in Section~\ref{sec:betti-inner-bounds}.

\begin{proposition}\label{prop:invariance}
	Let $M$ be a $\ZZ^r$-graded $S$-module.  If $M_{\geq\dd}$ has a linear (respectively quasilinear) resolution and $\dd'\geq\dd$ then $M_{\geq\dd'}$ has a linear (respectively quasilinear) resolution.
\end{proposition}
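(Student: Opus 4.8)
The plan is to reduce to the case $\dd' = \dd + \ee_k$ for some $k$, since the general statement follows by induction, walking from $\dd$ to $\dd'$ one standard basis vector at a time (all intermediate degrees also lie $\geq \dd$). So it suffices to compare the resolution of $M_{\geq\dd}$ with that of $(M_{\geq\dd})_{\geq\dd+\ee_k} = M_{\geq\dd+\ee_k}$. Replacing $M$ by $M_{\geq\dd}$, we may as well assume $M = M_{\geq\dd}$ already has a linear (resp.\ quasilinear) resolution $F_\b$ with $F_0$ generated in degree $\dd$, and we want to show $M_{\geq\dd+\ee_k}$ has a linear (resp.\ quasilinear) resolution with $F_0$ generated in degree $\dd+\ee_k$.

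The key input will be the machinery the paper sets up in Section~\ref{sec:betti}: the forward-reference says this proposition follows from Theorems~\ref{thm:betti-truncations} and~\ref{thm:betti-regularity}. Concretely, Theorem~\ref{thm:betti-truncations} (the "possibly nonminimal resolution of $M_{\geq\dd}$ built from term-by-term truncations of the minimal free resolution of $M$", as flagged in the introduction) should give control on $\beta_i(M_{\geq\dd+\ee_k})$ in terms of $\beta_i(M)$ — and via Remark~\ref{rem:describeL}, membership of a twist in $L_i(-\dd)$ or $Q_i(-\dd)$ is exactly a statement about the sum of the positive components of a difference of degree vectors. So the plan is: (i) invoke Theorem~\ref{thm:betti-truncations} to express the Betti degrees of $M_{\geq\dd+\ee_k}$ through those of $M$ (together with the combinatorics of truncating a free module $S(-\bb)$ at $\dd+\ee_k$); (ii) use the hypothesis $\beta_i(M) \subset L_i(-\dd)$ (resp.\ $Q_i(-\dd)$), rephrased via Remark~\ref{rem:describeL}, to bound the positive-coordinate sums; (iii) check that shifting the truncation degree by $\ee_k$ and re-basing the $L$/$Q$ regions at $-\dd-\ee_k$ preserves the needed inequality, so that $\beta_i(M_{\geq\dd+\ee_k}) \subset L_i(-\dd-\ee_k)$ (resp.\ $Q_i(-\dd-\ee_k)$). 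That $L_i(\dd')\subseteq Q_i(\dd')$ is already noted, so the quasilinear case is weaker; the linear case is the one to nail down, and then quasilinear follows either the same way or by the containment.

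The main obstacle I anticipate is step (i)–(ii): the term-by-term truncation $(F_\b)_{\geq\dd+\ee_k}$ is generally not a minimal resolution, so passing from its (known) twists to the actual multigraded Betti numbers $\beta_i(M_{\geq\dd+\ee_k})$ requires either a minimality/cancellation argument or, better, the explicit description of a minimal resolution of a truncation that Theorems~\ref{thm:betti-truncations}–\ref{thm:betti-regularity} presumably provide. The combinatorial heart is then verifying: if $\bb$ contributes an $i$-th syzygy of $M_{\geq\dd+\ee_k}$ then it arises from a $j$-th syzygy degree $\cc$ of $M$ for some $j\le i$ (with $\cc \geq \dd$-adjacent data), and the "extra" $i-j$ homological steps, coming from resolving truncations of free modules, shift the twist by a vector whose positive-part sum is at most $i-j$; combining with the bound $\le j$ (resp.\ $\le j-1$ shifted by $\one$) coming from $\cc\in L_j(-\dd)$ (resp.\ $Q_j(-\dd)$) gives the total bound $\le i$ (resp.\ $\le i-1$ after the $\one$-shift), now measured against $-\dd-\ee_k$. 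I would present this as a clean consequence once Theorems~\ref{thm:betti-truncations} and~\ref{thm:betti-regularity} are in hand, rather than reproving the resolution-of-a-truncation structure here.
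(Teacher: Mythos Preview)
Your plan would work, but it is considerably more laborious than the paper's argument and misses a simple symmetry that makes the reduction to a single step and the explicit Betti-number bookkeeping unnecessary.

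The paper applies Theorem~\ref{thm:betti-truncations} (resp.\ Theorem~\ref{thm:betti-regularity}) directly to the module $M_{\geq\dd}$: those theorems say that $(M_{\geq\dd})_{\geq\dd'}=M_{\geq\dd'}$ has a linear (resp.\ quasilinear) resolution generated in degree $\dd'$ whenever $\dd'$ lies in the intersection $\bigcap_{i}\bigcap_{\bb\in\beta_i(M_{\geq\dd})}L_i(\bb)$ (resp.\ $Q_i(\bb)$). The linearity hypothesis says each twist $-\bb$ in $F_i$ lies in $L_i(-\dd)$; the key observation is the symmetry
\[
-\bb\in L_i(-\dd)\ \Longleftrightarrow\ \dd\in L_i(\bb),
\]
and likewise for $Q$, which one checks immediately from the definition. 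Hence $\dd$ itself lies in the intersection, and since each $L_i(\bb)$ (resp.\ $Q_i(\bb)$) is upward-closed, so does every $\dd'\geq\dd$. That is the whole proof.

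By contrast, your route reopens the proof of Lemma~\ref{lem:truncation-betti} to track how a single $\ee_k$-shift propagates through the nonminimal resolution~\eqref{eq:nonmin-res}, and then verifies the positive-part inequalities of Remark~\ref{rem:describeL} by hand. This is essentially re-proving, in a special case, what Theorems~\ref{thm:betti-truncations} and~\ref{thm:betti-regularity} already package; it buys nothing extra. One small slip to flag: the containment $L_i\subseteq Q_i$ does \emph{not} let you deduce the quasilinear statement from the linear one, since the quasilinear hypothesis is strictly weaker; you would need to rerun the argument with $Q$ in place of $L$, as you note parenthetically.
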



A linear resolution for $M_{\geq\dd}$ implies that $M$ is $\dd$-regular when $\HH_B^0(M)=0$.  To obtain a converse that generalizes Eisenbud--Goto's result one should instead check that the resolution is quasilinear.  This gives a criterion for regularity that does not require computing cohomology.

\begin{theorem}\label{thm:quasilinear-conjecture}
	Let $M$ be a finitely generated $\ZZ^r$-graded $S$-module such that $\HH_B^0(M)=0$.  Then $M$ is $\dd$-regular if and only if the minimal free resolution $F_\b$ of $M_{\geq\dd}$ is quasilinear and $F_0$ is generated in degree $\dd$.
\end{theorem}

We prove one direction of Theorem~\ref{thm:quasilinear-conjecture} in Section~\ref{sec:reg-to-qlin} (Theorem~\ref{thm:reg-to-qlin}) and the other in Section~\ref{sec:qlin-to-reg} (Theorem~\ref{thm:qlin-to-reg}). The two directions are illustrated in the following example, which verifies regularity or non-regularity of a module by computing a single minimal free resolution for the truncation at a given degree and checking if it is quasilinear.

\begin{example}\label{ex:hyperelliptic-curve}
	A smooth hyperelliptic curve of genus 4 can be embedded into $\PP1\times\PP2$ as a curve of degree $(2,8)$. An example of such a curve is given explicitly in \cite[Ex.~1.4]{BES20} as the $B$-saturation $I$ of the ideal
	\[ \left\langle x_0^2y_0^2+x_1^2y_1^2+x_0x_1y_2^2, x_0^3y_2+x_1^3(y_0+y_1) \right\rangle. \]
  Using Theorem~\ref{thm:reg-to-qlin} it is relatively easy to check that $S/I$ is not $(2,1)$-regular: the minimal free resolution of $(S/I)_{\geq (2,1)}$ is
	\[\begin{tikzcd}
		0 & \lar S(-2,-1)^9 & \lar
		\begin{matrix}
			S(-2,-2)^{10} \\[-3pt]
			\oplus \\[-3pt]
			\dao{S(-2,-3)^2} \\[-3pt]
			\oplus \\[-3pt]
			S(-3,-1)^7
		\end{matrix} & \lar
		\begin{matrix}
			S(-2,-3)^3 \\[-3pt]
			\oplus \\[-3pt]
			S(-3,-2)^6 \\[-3pt]
			\oplus \\[-3pt]
			S(-3,-3)^3
		\end{matrix}& \lar
		S(-3,-3)^2 & \lar 0
	\end{tikzcd}\]
	which is not quasilinear because $(-2,-3)\notin Q_1(-2,-1)$.
  Meanwhile, $S/I$ is $(2,2)$-regular by Theorem~\ref{thm:qlin-to-reg} because the minimal free resolution of $(S/I)_{\geq(2,2)}$ is quasilinear:
  \[\begin{tikzcd}
	0 & \lar S(-2,-2)^{17} & \lar
	\begin{matrix}
		S(-2,-3)^{26} \\[-3pt]
		\oplus \\[-3pt]
		S(-3,-2)^{15}
	\end{matrix} & \lar
	\begin{matrix}
		S(-2,-4)^9 \\[-3pt]
		\oplus \\[-3pt]
		S(-3,-3)^{22}
	\end{matrix}& \lar
	S(-3,-4)^7 & \lar 0.
	\end{tikzcd}\]
  Note that $(2,2)$-regularity also follows from \cite[Thm.~2.9]{BES20} and \cite[Ex.~2.3]{BES20}, which constructs a quasilinear virtual resolution for $S/I$ using a Fourier--Mukai transform. \linebreak In fact, since all twists are line bundles on $\PP1\times\PP2$ in the range $(-2,-2)$ to $(-2,-2)+(-1,-2)$, by Theorem~\ref{thm:uniqueness} the two complexes are isomorphic.
\end{example}


\subsection{Regularity Implies Quasilinearity}\label{sec:reg-to-qlin}

In Proposition~\ref{prop:free-monad} we constructed a virtual resolution with Betti numbers determined by the sheaf cohomology of $\shM\otimes\Om_{\PP\nn}^\aa(\aa)$. By resolving the sheaves $\Om_{\PP\nn}^\aa(\aa)$ in terms of line bundles and tensoring with $\shM$\!, we can relate the cohomological vanishing in the definition of multigraded regularity to the shape of this virtual resolution. The following lemma implies that when $M$ is $\dd$-regular the virtual resolution is quasilinear, i.e., the coefficients of twists outside of $Q_i(-\dd)$ are zero. The lemma is a variant of \cite[Lem.~2.13]{BES20} (see Section~\ref{sec:qlin-to-reg}).

\begin{lemma}\label{lem:BES-reg-to-qlin}
	If a $\ZZ^r$-graded $S$-module $M$ is $\zero$-regular then $\HH^{|\aa|-i}(\PP\nn\!, \shM\otimes\Om_{\PP\nn}^\aa(\aa))=0$ for all $-\aa\notin Q_i(\zero)$ and all $i>0$.
\end{lemma}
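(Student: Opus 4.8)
The plan is to reduce the cohomological vanishing for $\shM\otimes\Om_{\PP\nn}^\aa(\aa)$ to the local-cohomology vanishing built into the definition of $\zero$-regularity. The point is that each $\Om_{\PP\nn}^\aa(\aa)$ is a tensor product of cotangent bundles, so it has a finite locally free resolution by line bundles coming from the (twisted) Koszul complexes on the factors. First I would recall from Section~\ref{sec:Koszul-cotangent} that $\Om_{\PP{n_i}}^{a_i}$ is the sheafification of the kernel $\hat\Om_{\PP{n_i}}^{a_i}$, whose minimal free resolution consists of the terms of $K^i_\b$ in homological degree $>a_i$; after twisting by $\o(\aa)$ and taking the external tensor product over the $r$ factors, $\Om_{\PP\nn}^\aa(\aa)$ has a locally free resolution whose terms are direct sums of line bundles $\o_{\PP\nn}(-\bb)$ with $\bb$ ranging over the allowed Koszul twists. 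The key bookkeeping step is to identify exactly which twists $-\bb$ occur: in the $\ell$-th homological step of this resolution of $\Om_{\PP\nn}^\aa(\aa)$ one gets twists $\o_{\PP\nn}(\aa - \mathbf{c})$ where $\mathbf{c}\ge\zero$ satisfies $|\mathbf c| = |\aa| + \ell$ and $c_i > a_i$ for every $i$ with $a_i < n_i+1$ — equivalently $\mathbf c \ge \aa + \one$ componentwise up to the range issue, so the twists appearing lie in degrees $\ge \aa+\one$ (in each coordinate where $\Om$ is nonzero).

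Next I would tensor this resolution with $\shM$ and run the associated hypercohomology spectral sequence computing $\HH^*\!\big(\PP\nn, \shM\otimes\Om_{\PP\nn}^\aa(\aa)\big)$. Its $E_1$ page has terms of the form $\HH^q\!\big(\PP\nn, \shM(\aa-\mathbf c)\big)$, which by the natural isomorphism recalled in Section~\ref{sec:truncations-local-coh} equal $\HH_B^{q+1}(M)_{\aa-\mathbf c}$ for $q>0$, plus the $q=0$ term governed by \eqref{eq:gamma-star}. So to kill $\HH^{|\aa|-i}$ of $\shM\otimes\Om_{\PP\nn}^\aa(\aa)$ it suffices to check that every contributing $\HH_B^{q+1}(M)_{\aa-\mathbf c}$ (and the relevant $\HH_B^0$, $\HH_B^1$ terms) vanishes whenever $-\aa\notin Q_i(\zero)$ and $i>0$. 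The homological degrees $\ell$ and cohomological degrees $q$ that feed into total degree $|\aa|-i$ are constrained by $\ell - q = \text{(shift)}$ coming from how the resolution sits, and I would track that the relevant degrees $\aa - \mathbf c$ land in the regions where $\zero$-regularity (Definition~\ref{def:regular}, using the $L_{q}(\zero)$ regions) forces vanishing. This is where the precise translation between the condition "$-\aa\notin Q_i(\zero)$" — i.e., by Remark~\ref{rem:describeL} the sum of positive components of $-\aa-(-\one) = \one-\aa$ exceeds $i-1$ — and the membership of $\aa-\mathbf c$ in the various $L_{q-1}(\zero)$ gets used.

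I expect the main obstacle to be exactly this combinatorial matching: verifying that for every pair $(\ell, q)$ with the homological/cohomological indices summing to $|\aa|-i$, and every admissible $\mathbf c$, the twist $\aa - \mathbf c$ forces $\HH_B^{q+1}(M)_{\aa-\mathbf c}=0$ via $\zero$-regularity, and handling the boundary contributions from $\HH_B^0$ and $\HH_B^1$ separately through the exact sequence \eqref{eq:gamma-star}. A clean way to organize this is to induct on $r$ (the number of factors): for $r=1$ this is the classical statement that $\Om_{\PPn}^a(a)$ has a linear resolution and the vanishing is immediate from ordinary $d$-regularity, and for the inductive step one uses the external tensor product structure $\Om_{\PP\nn}^\aa = \Om^{\aa'}_{\PP{\nn'}} \boxtimes \Om^{a_r}_{\PP{n_r}}$ together with a Künneth argument, reducing the region calculus to combining the one-factor $L$-regions. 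The alternative, which may be cleaner to write, is to avoid the spectral sequence and instead observe directly that the complex obtained by tensoring the Koszul-type resolution of $\Om_{\PP\nn}^\aa(\aa)$ with the local \v{C}ech complex $\cech^\b(B,M)$ computes everything, and then a termwise degree check — using that $\cech^t$ contributes in degrees where local cohomology lives — completes the argument; this is morally the same as the proof strategy behind \eqref{eq:magic-equality} and the cited \cite[Lem.~2.13]{BES20}.
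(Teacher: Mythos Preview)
Your approach is essentially the paper's: resolve $\Om_{\PP\nn}^\aa(\aa)$ by line bundles coming from the Koszul tails on the factors, tensor with $\shM$, and run the hypercohomology spectral sequence so that nonvanishing of $\HH^{|\aa|-i}(\shM\otimes\Om^\aa(\aa))$ forces $\HH^{|\aa|-i+j}(\shM\otimes\mathcal F_j)\neq 0$ for some $j$, contradicting $\zero$-regularity.

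Where your write-up drifts is in the bookkeeping, and the paper's organization removes exactly the difficulties you anticipate. First, your description of the twists $\aa-\mathbf c$ is slightly off (for coordinates with $a_i=0$ one has $c_i=0$, and your total $|\mathbf c|=|\aa|+\ell$ misses the contribution from the nonzero coordinates). The paper bypasses this by introducing $\ell=\#\{i:a_i>0\}$ and observing directly that the twists in $\mathcal F_j$ are nonpositive of total degree $-(j+\ell)$, hence lie in $L_{j+\ell}(\zero)$. Second, the combinatorial matching you flag as the main obstacle is a one-line consequence of Remark~\ref{rem:describeL}: the hypothesis $-\aa\notin Q_i(\zero)=L_{i-1}(-\one)$ says the sum of the nonnegative coordinates of $\aa-\one$ exceeds $i-1$, but those coordinates are exactly $a_i-1$ for $a_i>0$ and sum to $|\aa|-\ell$; thus $|\aa|-\ell\geq i$, i.e.\ $|\aa|-i+j\geq j+\ell$, so $L_{j+\ell}(\zero)\subseteq L_{|\aa|-i+j}(\zero)$ and regularity gives the vanishing. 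This single inequality also forces $|\aa|-i+j\geq\ell\geq 1$, so only sheaf cohomology in positive degree appears and your concern about separate handling of $\HH_B^0$ and $\HH_B^1$ evaporates. With this in hand there is no need for the induction on $r$ or the K\"unneth detour you propose.
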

\begin{proof}
	Fix $i$ and $\aa\in\ZZ^r$ with $-\aa\notin Q_i(\zero)$, and suppose that $\HH^{|\aa|-i}(\PP\nn\!, \shM\otimes\Om_{\PP\nn}^\aa(\aa))\neq 0$.  We will show that $M$ is not $\zero$-regular.  We must have $\zero\leq\aa\leq\nn$, else $\Om_{\PP\nn}^\aa(\aa)=0$.  Let $\ell$ be the number of nonzero coordinates in $\aa$.

	A tensor product of locally free resolutions for the factors $\pi_i^*(\Om_{\PP{n_i}}^{a_i})$ gives a locally free resolution for $\Om_{\PP\nn}^\aa(\aa)$.  Since $\Om_{\PP{n_i}}^0=\OO_{\PP{n_i}}$ we can use $r-\ell$ copies of $\OO_{\PP\nn}$ and $\ell$ linear resolutions, each generated in total degree 1, to obtain such a resolution $\cF_\b$ (see Section~\ref{sec:Koszul-cotangent}).  Thus the twists in $\cF_j$ have nonpositive coordinates and total degree $-j-\ell$, so they are in $L_{j+\ell}(\zero)$.

	Since $\cF$ is locally free the cokernel of $\shM\otimes\cF$ is isomorphic to $\shM\otimes\Om_{\PP\nn}^\aa(\aa)$.  By a standard spectral sequence argument, explained in the proof of Theorem~\ref{thm:qlin-to-reg}, the nonvanishing of $\HH^{|\aa|-i}(\PP\nn\!, \shM\otimes\Om_{\PP\nn}^\aa(\aa))$ implies the existence of some $j$ such that $\HH^{|\aa|-i+j}(\PP\nn\!, \shM\otimes\cF_j)\neq 0$.

	If $i=0$ then
	\[ |\aa|-i+j\geq\ell-i+j=j+\ell. \]
	If $i>0$ then $\aa-\one$ has $\ell$ nonnegative coordinates that sum to $|\aa|-\ell$.  Thus $|\aa|-\ell>i-1$, since $-\aa\notin Q_i(\zero)=L_{i-1}(-\one)$ (see Remark~\ref{rem:describeL}).  This also gives
	\[ |\aa|-i+j \geq (\ell+i)-i+j = j+\ell, \]
	so in either case $L_{j+\ell}(\zero)\subseteq L_{|\aa|-i+j}(\zero)$.  Therefore $\HH^{|\aa|-i+j}(\PP\nn\!, \shM\otimes\cF_j)\neq 0$ for $F_j$ with twists in $L_{j+\ell}(\zero)$ implies that $M$ is not $\zero$-regular.
\end{proof}

See \cite[Thm.~5.5]{CMR07} for a similar result relating Hoffman and Wang's definition of regularity \cite{HW04} to a different cohomology vanishing for $\shM\otimes\Om_{\PP\nn}^\aa(\aa)$.  A stronger version of their definition, requiring vanishing in multiple local cohomology modules, is also equivalent to a statement about Betti numbers \cite[Thm.~4.10]{HW04}.

Motivated by the quasilinearity of the virtual resolution in Proposition~\ref{prop:free-monad}, we will prove that the $\dd$-regularity of $M$ implies that the minimal free resolution of $M_{\geq\dd}$ is quasilinear. Let $K$ be the Koszul complex from Section~\ref{sec:Koszul-cotangent} and $\cech^p(B,-)$ the \v{C}ech complex as in Section~\ref{sec:truncations-local-coh}. We will use the spectral sequence of a double complex with rows from subcomplexes of $K$ and columns given by \v{C}ech complexes in order to relate the Betti numbers of $M_{\geq\dd}$ to the sheaf cohomology of $\shM\otimes\Om_{\PP\nn}^\aa(\aa)$.

\begin{theorem}\label{thm:reg-to-qlin}
	Let $M$ be a finitely generated $\ZZ^r$-graded $S$-module such that $\HH_B^0(M)_\dd=0$.  If $M$ is $\dd$-regular then $M_{\geq\dd}$ has a quasilinear free resolution $F_\b$ with $F_0$ generated in degree $\dd$.
\end{theorem}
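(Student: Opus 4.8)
The plan is to establish the equality~\eqref{eq:magic-equality}, after which the statement follows by combining Lemma~\ref{lem:BES-reg-to-qlin} with elementary bookkeeping in the regions $Q_j$. First I would reduce to $\dd=\zero$: passing from $M$ to $M(\dd)$ replaces $\dd$\nobreakdash-regularity by $\zero$\nobreakdash-regularity and the hypothesis $\HH_B^0(M)_\dd=0$ by $\HH_B^0(M(\dd))_\zero=0$, identifies $M_{\geq\dd}$ with $(M(\dd))_{\geq\zero}$ up to the twist, and, because the regions $L_i$ and $Q_i$ are translation invariant, carries ``quasilinear with $F_0$ generated in degree $\dd$'' to ``quasilinear with $F_0$ generated in degree $\zero$.'' So assume $M$ is $\zero$-regular with $\HH_B^0(M)_\zero=0$; together with Definition~\ref{def:regular} this gives $\HH_B^0(M)_\pp=\HH_B^1(M)_\pp=0$ for all $\pp\geq\zero$.

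The key step is to reinterpret the relevant graded strand of the Koszul complex $K_\b$ on the variables of $S$ as a complex of global sections. Since $K_\b$ resolves $\kk$ and all of its free summands have the form $S(-\bb)$ with $\zero\leq\bb\leq\nn$, in degree $\aa$ the complex $M_{\geq\zero}\otimes_S K_\b$ only involves the summands of $K_\b^{\leq\aa}$, and in homological degree $p$ it equals $\bigoplus_{\bb\leq\aa} M_{\aa-\bb}^{\oplus m_\bb}$, where $m_\bb$ is the multiplicity of $S(-\bb)$ in $K_p$. For each such $\bb$ we have $\aa-\bb\geq\zero$, so \eqref{eq:gamma-star} together with the vanishing above yields a natural isomorphism $M_{\aa-\bb}\cong\HH^0(\PP\nn,\shM(\aa-\bb))$; since the Koszul differential is multiplication by variables, these assemble to an isomorphism of complexes
\[ (M_{\geq\zero}\otimes_S K_\b)_\aa\;\cong\;\Gamma\big(\PP\nn,\;\shM(\aa)\otimes\tilde K_\b^{\leq\aa}\big). \]
Taking homology and using that $K_\b$ resolves $\kk$ gives $\Tor_j^S(M_{\geq\zero},\kk)_\aa\cong\HH_j\big(\Gamma(\PP\nn,\shM(\aa)\otimes\tilde K_\b^{\leq\aa})\big)$.

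Next I would resolve the right-hand side with the \v{C}ech complex of the affine cover $\mathfrak U_B$, forming the double complex $E^{p,q}=\Gamma\big(\PP\nn,\cech^q(\mathfrak U_B,\shM(\aa)\otimes\tilde K_p^{\leq\aa})\big)$ with horizontal differential from $\tilde K_\b^{\leq\aa}$ and vertical differential from $\cech^\b$, and run its two spectral sequences. Taking \v{C}ech cohomology first: $\tilde K_p^{\leq\aa}$ is a direct sum of line bundles $\o_{\PP\nn}(-\bb)$ with $\zero\leq\bb\leq\aa$, and $\HH^{>0}(\PP\nn,\shM(\aa-\bb))=0$ by $\zero$-regularity since $\aa-\bb\geq\zero$, so the first page is concentrated in the row $q=0$, where it is the complex of the previous paragraph, and the sequence degenerates to $\Tor_j^S(M_{\geq\zero},\kk)_\aa$. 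Taking Koszul homology first: $\Gamma\big(\PP\nn,\cech^q(\mathfrak U_B,-)\big)$ is a finite direct sum of section functors over affine opens of $\mathfrak U_B$ and hence exact on quasicoherent sheaves, so the first page computes the \v{C}ech cohomology of the homology sheaves of $\shM(\aa)\otimes\tilde K_\b^{\leq\aa}$; because $\tilde K_\b^{\leq\aa}$ has locally free terms and, by Lemma~\ref{lem:finite-length-homology}, homology $\Om_{\PP\nn}^\aa$ (which is locally free) concentrated in homological degree $|\aa|$, the first page is $\Gamma(\PP\nn,\cech^q(\mathfrak U_B,\shM\otimes\Om_{\PP\nn}^\aa(\aa)))$ in row $p=|\aa|$ and vanishes otherwise, so the sequence degenerates to $\HH^{|\aa|-j}(\PP\nn,\shM\otimes\Om_{\PP\nn}^\aa(\aa))$. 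Comparing the two answers yields~\eqref{eq:magic-equality} for $\dd=\zero$; both sides vanish when $\aa\notin[\zero,\nn]$, since then $\Om_{\PP\nn}^\aa=0$ and $\tilde K_\b^{\leq\aa}$ is exact.

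Finally I would read off quasilinearity of the minimal free resolution $F_\b$ of $M_{\geq\zero}$; note that since $M_{\geq\zero}$ is generated in degrees $\geq\zero$, all Betti degrees lie in $\NN^r$. For $j>0$ and $\aa\in\beta_j(M_{\geq\zero})$ we have $\aa\geq\zero$: if $|\aa|<j$ then $-\aa\in Q_j(\zero)$ automatically by Remark~\ref{rem:describeL}, while if $|\aa|\geq j$ then \eqref{eq:magic-equality} makes $\HH^{|\aa|-j}(\PP\nn,\shM\otimes\Om^\aa(\aa))$ nonzero, so Lemma~\ref{lem:BES-reg-to-qlin} forces $-\aa\in Q_j(\zero)$; hence the twists in $F_j$ lie in $Q_j(\zero)$. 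For $j=0$ the argument in the proof of Lemma~\ref{lem:BES-reg-to-qlin} applies verbatim with ``$i=0$'' (resolve $\Om^\aa(\aa)$ by line bundles with twists in $L_{t+\ell}(\zero)$ and use $|\aa|\geq\ell$) to show $\HH^{|\aa|}(\PP\nn,\shM\otimes\Om^\aa(\aa))=0$ whenever $\zero\neq\aa\in[\zero,\nn]$, so \eqref{eq:magic-equality} gives $\beta_0(M_{\geq\zero})\subseteq\{\zero\}$ and $F_0$ is generated in degree $\zero$. Thus $F_\b$ is quasilinear, and untwisting by $\dd$ finishes the proof. I expect the main obstacle to be the identification $(M_{\geq\zero}\otimes_S K_\b)_\aa\cong\Gamma(\PP\nn,\shM(\aa)\otimes\tilde K_\b^{\leq\aa})$ and keeping the two spectral sequence degenerations honest; the closing region-chasing is routine.
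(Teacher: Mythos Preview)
Your argument is correct and rests on the same \v{C}ech--Koszul double complex idea as the paper, but the packaging differs in an instructive way. The paper works with the \emph{local} \v{C}ech complex of $M\otimes K^{\leq\aa}_\b$ augmented by an extra column $\cech^t(B,M\otimes\hat\Om^\aa)$: the horizontal sequence degenerates because the sheafified rows are exact (via Lemma~\ref{lem:cech-horseshoe}), leaving the $t=0$ row which computes $\Tor$, while the vertical sequence degenerates because regularity kills the local cohomology of the $K^{\leq\aa}$ columns in degree $\aa$, leaving only the $\hat\Om^\aa$ column, whose local cohomology gives $\HH^{t-1}(\PP\nn,\shM\otimes\Om^\aa(\aa))$. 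The paper also phrases the conclusion as an equality of Betti numbers with the Beilinson virtual resolution of Proposition~\ref{prop:free-monad}. You instead take the \emph{sheaf} \v{C}ech complex of $\shM(\aa)\otimes\tilde K^{\leq\aa}_\b$ with no extra column, and the degeneration roles are swapped: regularity collapses the \v{C}ech-first sequence to the $q=0$ row, which you identify with $(M_{\geq\zero}\otimes K_\b)_\aa$ using $\HH_B^0(M)_{\geq\zero}=\HH_B^1(M)_{\geq\zero}=0$, while the single locally free homology sheaf $\Om^\aa$ of $\tilde K^{\leq\aa}_\b$ collapses the Koszul-first sequence to one column. Your route sidesteps Proposition~\ref{prop:free-monad} and Lemma~\ref{lem:cech-horseshoe} at the small cost of needing $\tilde K^{\leq\aa}_\b$ to be locally split (so tensoring with $\shM(\aa)$ commutes with homology), which follows since its terms and homology are locally free. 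Both routes yield~\eqref{eq:magic-equality} and finish via Lemma~\ref{lem:BES-reg-to-qlin}. One minor remark: your case $|\aa|<j$ is vacuous, since minimality of $F_\b$ over the positively graded ring $S$ already forces $|\aa|\geq j$ for every $\aa\in\beta_j(M_{\geq\zero})$.
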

\begin{proof}
	Without loss of generality we may assume that $\dd=\zero$ and $M=M_{\geq\zero}$ (see Lemma~\ref{lem:truncation-regularity}).

  Let $F_\b$ be the minimal free resolution of $M$\!. We will show that the degree $\aa$ Betti numbers of $F_\b$ in homological index $j$ are given by $h^{|\aa|-j}(\PP\nn\!, \shM\otimes\Om_{\PP\nn}^\aa(\aa))$ (i.e., that $F_\b$ has the same Betti numbers as the free monad $G_\b$ from Proposition~\ref{prop:free-monad}). Since $M$ is $\zero$-regular the vanishing of these cohomology groups results in a quasilinear free resolution by Lemma~\ref{lem:BES-reg-to-qlin}, and \cite[Thm.~5.4]{MS04} forces $F_0$ to be generated in degree $\zero$. (In fact this is enough to show that $F_\b$ and $G_\b$ are isomorphic, as we will do in Corollary~\ref{cor:same-res}.)

	Fix a degree $\aa\in\ZZ^r$\!.  Construct a double complex $E^{\b,\b}$ by taking the \v{C}ech complex of each term in $M\otimes K_\b^{\leq\aa}$ and including the \v{C}ech complex of $M\otimes\hat\Om_{\PP\nn}^\aa$ as an additional column.  Index $E^{\b,\b}$ so that
	\[E^{s,t}=\begin{cases}
		\cech^t\left(B, M\otimes K_{|\aa|+1-s}^{\leq\aa}\right) & \text{if }s>0,\\
		\cech^t\left(B, M\otimes\hat\Om_{\PP\nn}^\aa\right) & \text{if }s=0.
	\end{cases}\]
	We will compare the vertical and horizontal spectral sequences of $E^{\b,\b}$ in degree $\aa$.  By Lemma~\ref{lem:finite-length-homology} and the fact that $K_\b^{\leq\aa}$ is locally free, the sheafification of the 0-th row $E^{\b,0}$ is exact.  Thus by Lemma~\ref{lem:cech-horseshoe} the rows of $E^{\b,\b}$ are exact for $t\neq 0$.

  \[\begin{tikzcd}[column sep=tiny, row sep=small]
	{} & \vdots & \vdots & \vdots && \vdots \\
	& {\cech^2(B, M\otimes\hat\Om_{\PP\nn}^\aa)} & {\cech^2(B, M\otimes K^{\leq\aa}_{|\aa|})} & {\cech^2(B, M\otimes K^{\leq\aa}_{|\aa|-1})} & \cdots & {\cech^2(B, M\otimes K^{\leq\aa}_0)} \\
	& {\cech^1(B, M\otimes\hat\Om_{\PP\nn}^\aa)} & {\cech^1(B, M\otimes K^{\leq\aa}_{|\aa|})} & {\cech^1(B, M\otimes K^{\leq\aa}_{|\aa|-1})} & \cdots & {\cech^1(B, M\otimes K^{\leq\aa}_0)} \\
	& {M\otimes\hat\Om_{\PP\nn}^\aa} & {M\otimes K^{\leq\aa}_{|\aa|}} & {M\otimes K^{\leq\aa}_{|\aa|-1}} & \cdots & {M\otimes K^{\leq\aa}_0} \\
	{} &&&&& {}
	\arrow[from=4-2, to=3-2]
	\arrow[from=3-2, to=2-2]
	\arrow[from=2-2, to=1-2]
	\arrow[from=4-3, to=3-3]
	\arrow[from=3-3, to=2-3]
	\arrow[from=2-3, to=1-3]
	\arrow[from=4-4, to=3-4]
	\arrow[from=3-4, to=2-4]
	\arrow[from=2-4, to=1-4]
	\arrow[from=4-6, to=3-6]
	\arrow[from=3-6, to=2-6]
	\arrow[from=2-6, to=1-6]
	\arrow[from=4-2, to=4-3]
	\arrow[from=4-3, to=4-4]
	\arrow[from=4-4, to=4-5]
	\arrow[from=4-5, to=4-6]
	\arrow[from=3-2, to=3-3]
	\arrow[from=3-3, to=3-4]
	\arrow[from=3-4, to=3-5]
	\arrow[from=3-5, to=3-6]
	\arrow[from=2-2, to=2-3]
	\arrow[from=2-3, to=2-4]
	\arrow[from=2-4, to=2-5]
	\arrow[from=2-5, to=2-6]
	\arrow[shift  left=5, shorten <=10pt, shorten >=-48pt, dashed, no head, from=5-1, to=5-6]
	\arrow[shift right=5, shorten <=12pt, shorten >=-20pt, dashed, no head, from=5-1, to=1-1]
  \end{tikzcd}\]


	Since the elements of $M$ have degrees $\geq\zero$, the elements of degree $\aa$ in $M\otimes K_\b$ come from elements of degree $\leq\aa$ in $K_\b$.  Thus by Lemma~\ref{lem:finite-length-homology} the homology of $M\otimes K_\b^{\leq\aa}$ in degree $\aa$ is the same as that of $M\otimes K_\b$.  Hence the cohomology of the 0-th row $E^{\b,0}$ in degree $\aa$ computes the degree $\aa$ Betti numbers of $F_j$ for $0\leq j\leq|\aa|$, i.e., for $s>0$,
	\begin{align}\label{eq:betti-of-F}
		\HH^s(E^{\b,0})_\aa=\Tor_{|\aa|+1-s}^S(M,\kk)_\aa.
	\end{align}

	The vertical cohomology of $E^{\b,\b}$ gives the local cohomology of the terms of $M\otimes K_\b^{\leq\aa}$ along with $M\otimes\hat\Om_{\PP\nn}^\aa$.  Consider the degree $\aa$ part of this double complex.  The cohomology coming from $M\otimes K_\b^{\leq\aa}$ has summands of the form $\HH_B^i(M(-\bb))_\aa=\HH_B^i(M)_{\aa-\bb}$ where $\bb\leq\aa$.  These vanish because $M$ is $\zero$-regular, except possibly $\HH_B^0(M)_\zero$ which vanishes by hypothesis, so the only nonzero terms come from $M\otimes\hat\Om_{\PP\nn}^\aa$.

	Since $K_\b^{\leq\aa}$ is a resolution of $\kk$ in degrees $\leq\aa$, there are no elements of degree $\aa$ in $M\otimes\hat\Om_{\PP\nn}^\aa$.  Hence, using \eqref{eq:gamma-star},
	\[\HH_B^1\left(M\otimes\hat\Om_{\PP\nn}^\aa\right)_\aa = \HH^0\left(\PP\nn\!, \shM\otimes\Om_{\PP\nn}^\aa(\aa)\right).\]
	Therefore the cohomology of the $0$-th column $E^{0,\b}$ in degree $\aa$ is
	\begin{align}\label{eq:betti-of-G}
		\HH^t(E^{0,\b})_\aa = \HH_B^t(M\otimes\hat\Om_{\PP\nn}^\aa)_\aa = \HH^{t-1}(\PP\nn\!, \shM\otimes\Om_{\PP\nn}^\aa(\aa))
	\end{align}
	for $t>0$ (i.e., the Betti numbers of $G_\b$ indexed differently).

	Since both spectral sequences of the double complex $E^{\b,\b}$ converge after the first page, their total complexes agree in degree $\aa$, so by equating the dimensions of \eqref{eq:betti-of-F} and \eqref{eq:betti-of-G} in total degree $|\aa|+1-j$ we get
  \begin{align}\label{eq:magic-equality-2}
    \dim_\kk\Tor_j^S(M, \kk)_\aa = \dim_\kk\HH^{|\aa|-j}(\PP\nn\!, \shM\otimes\Om_{\PP\nn}^\aa(\aa))
  \end{align}
	for $|\aa|\geq j\geq 0$. When $j>|\aa|$, the left hand side vanishes by minimality of $F_\b$, and when $\aa$ has $\Om_{\PP\nn}^\aa=0$ the argument above still holds. Hence $F_\b$ is quasilinear by Lemma~\ref{lem:BES-reg-to-qlin} and $F_0$ is generated in degree $\zero$ by \cite[Thm.~5.4]{MS04}, as desired.
\end{proof}

The proof of Theorem~\ref{thm:reg-to-qlin} implies that when $M$ is $\dd$-regular the minimal free resolution $F_\b$ of $M_{\geq\dd}$ has the same Betti numbers as the virtual resolution $G_\b$ constructed in Proposition~\ref{prop:free-monad}. The following corollary shows that they are in fact isomorphic. In other words, the minimal free resolution of $M_{\geq\dd}(\dd)$ is composed of the terms of the Beilinson spectral sequence for $M(\dd)$, giving a concrete construction of the abstractly defined virtual resolutions used in \cite[Thm.~2.9]{BES20} to witness the regularity of $M(\dd)$ (see Example~\ref{ex:hyperelliptic-curve}).

\begin{corollary}\label{cor:same-res}
	The complexes $F_\b$ and $G_\b$ in the proof of Theorem~\ref{thm:reg-to-qlin} are isomorphic.
\end{corollary}
\begin{proof}
	From Proposition~\ref{prop:free-monad} and the fact that $\Omega_{\PP\nn}^\aa$ is nonzero only for $\zero\leq\aa\leq\nn$ it follows that $G_\b$ is a virtual resolution with no unit maps consisting of twists $S(-\aa)$ with $\zero\leq\aa\leq\nn$. Therefore the isomorphism follows from Theorem~\ref{thm:uniqueness}.
\end{proof}

To check that a module $M$ is $\dd$-regular directly from Definition~\ref{def:regular}, condition \eqref{item:reg-def} requires one to show that $H^{i}_{B}(M)_{\pp}$ vanishes for all $i>0$ and all $\pp\in\bigcup_{|\lambda|=i}\left(\dd-\lambda_1\ee_1-\cdots-\lambda_r\ee_r+\NN^r\right)$ with $\lambda\in\NN^r$\!. The proof of Theorem~\ref{thm:reg-to-qlin}, when combined with Theorem~\ref{thm:quasilinear-conjecture} and Lemma~\ref{lem:BES-reg-to-qlin}, shows that on a product of projective spaces the full strength of this condition is unnecessary. In particular, one only needs to consider $\lambda_j$ with $\lambda_j\leq n_j+1$. See \cite[Prop.~2.5]{BM11} for a similar statement on a product of two projective spaces using the definition from \cite{HW04}.

\begin{proposition}\label{prop:dan-def}
  Let $M$ be a finitely generated $\ZZ^r$-graded $S$-module.  If
  \begin{enumerate}
  \item  $\HH_B^0(M)_\pp=0$ for all $\pp\geq\dd$
  \item  $\HH_B^i(M)_\pp = 0$ for all $i>0$ and all \label{item:Daniel-def} $\pp\in\bigcup_{|\lambda|=i}\left(\dd-\sum_1^r\lambda_j\ee_j+\NN^r\right)$	where $0\leq\lambda_j\leq n_j+1$
  \end{enumerate}
  then $M$ is $\dd$-regular.
\end{proposition}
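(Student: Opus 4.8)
The plan is to obtain this proposition by re-running the proofs of Theorem~\ref{thm:reg-to-qlin} and Lemma~\ref{lem:BES-reg-to-qlin} and observing that they consult $\HH_B^i(M)$ only at cohomological indices and internal degrees that hypotheses~(1)--(2) already govern, and then invoking Theorem~\ref{thm:quasilinear-conjecture}. Replacing $M$ by $M(\dd)$ we may assume $\dd=\zero$, and since $M$ is $\zero$-regular if and only if $M_{\geq\zero}$ is (Lemma~\ref{lem:truncation-regularity}), it suffices to treat the finitely generated module $N\coloneqq M_{\geq\zero}$, for which $\tilde N=\shM$. Hypothesis~(1) forces $\HH_B^0(N)=0$; for $i\geq 2$ one has $\HH_B^i(N)=\HH_B^i(M)$, so~(2) passes to $N$ in that range; and for every $i\geq 1$ one gets $\HH_B^i(N)_\pp=0$ whenever $\pp\geq\zero$, because for $1\leq i\leq|\nn|+r$ any multi-index $\lambda$ with $|\lambda|=i$ and $0\leq\lambda_j\leq n_j+1$ satisfies $\lambda\geq\zero$, so $\NN^r$ lies in the region $\bigcup_{|\lambda|=i,\,0\leq\lambda_j\leq n_j+1}(-\sum_j\lambda_j\ee_j+\NN^r)$ of~(2), while $\HH_B^i(-)=0$ for $i>\dim S=|\nn|+r$ by Grothendieck vanishing (and $\HH_B^1(N)_{\geq\zero}=\HH_B^1(M)_{\geq\zero}$).

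The first technical step is to check that equality~\eqref{eq:magic-equality-2} holds for $N$, i.e.\ $\dim_\kk\Tor_j(N,\kk)_\aa=\dim_\kk\HH^{|\aa|-j}(\PP\nn,\shM\otimes\Om_{\PP\nn}^\aa(\aa))$ for $|\aa|\geq j\geq 0$. Inspecting the proof of Theorem~\ref{thm:reg-to-qlin}, the $\zero$-regularity of $M$ is used only when computing the vertical cohomology of the double complex in degree $\aa$: the summands coming from $N\otimes K_\b^{\leq\aa}$ contribute the groups $\HH_B^t(N)_{\aa-\bb}$ with $\zero\leq\bb\leq\aa$, and one needs all of these to vanish. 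Since $\aa-\bb\geq\zero$ this is exactly the positive-orthant vanishing arranged above (the case $t=0$, $\pp=\zero$ being absorbed into $\HH_B^0(N)=0$), so the rest of that proof goes through unchanged and~\eqref{eq:magic-equality-2} holds.

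The second step is to upgrade Lemma~\ref{lem:BES-reg-to-qlin} so that, under hypothesis~(2) alone, $\HH^{|\aa|-i}(\PP\nn,\shM\otimes\Om_{\PP\nn}^\aa(\aa))=0$ for all $i\geq 0$ and all $-\aa\notin Q_i(\zero)$ (the case $i=0$ being the one that will pin $F_0$ to a single degree). We may assume $\zero\leq\aa\leq\nn$, $\aa\neq\zero$, and $i\leq|\aa|$ (otherwise the cohomology or the claim is trivial), so the number $\ell$ of nonzero coordinates of $\aa$ is at least $1$ and $-\aa\notin Q_i(\zero)$ gives $|\aa|\geq i+\ell$. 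Resolving $\Om_{\PP\nn}^\aa(\aa)$ by line bundles $\mathcal F_\b$ as in that proof, a nonzero $\HH^{|\aa|-i}(\shM\otimes\Om_{\PP\nn}^\aa(\aa))$ produces, for some $j\geq 0$, a nonzero group $\HH^{|\aa|-i+j}(\shM(-\cc))=\HH_B^k(N)_{-\cc}$, where $k\coloneqq|\aa|-i+j+1$, the twist $-\cc$ has $\cc\geq\zero$ and $c_l\leq n_l+1$ for all $l$, and $|\cc|=j+\ell$; note $k\geq\ell+j+1\geq 2$. Now either $k>|\nn|+r$, so $\HH_B^k(N)=0$ by Grothendieck vanishing --- a contradiction --- or $k\leq|\nn|+r$, in which case $\cc\geq\zero$, $c_l\leq n_l+1$, and $|\cc|=j+\ell\leq k-1$ let us choose $\mu$ with $\cc\leq\mu$, $0\leq\mu_l\leq n_l+1$, and $|\mu|=k$, placing $-\cc$ in the region of hypothesis~(2) at cohomological index $k\geq 2$; hence $\HH_B^k(N)_{-\cc}=\HH_B^k(M)_{-\cc}=0$, again a contradiction.

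Combining~\eqref{eq:magic-equality-2} with this strengthening gives $\Tor_j(N,\kk)_\aa=0$ whenever $-\aa\notin Q_j(\zero)$, and $\Tor_j(N,\kk)_\aa=0$ for $j>|\aa|$ since $N$ is generated in degrees $\geq\zero$; taking $j=0$ and using again that $N$ is generated in nonnegative degrees pins $\Tor_0(N,\kk)$ to degree $\zero$. Thus the minimal free resolution of $N$ is quasilinear with $F_0$ generated in degree $\zero$, and Theorem~\ref{thm:quasilinear-conjecture} (whose hypothesis $\HH_B^0(N)=0$ is verified above) yields that $N$, hence $M$, is $\zero$-regular --- equivalently, the original module is $\dd$-regular. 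I expect the main obstacle to be the bookkeeping in the two middle steps: one must identify precisely which groups $\HH_B^k(M)_\pp$ are consulted in the proofs of Theorem~\ref{thm:reg-to-qlin} and Lemma~\ref{lem:BES-reg-to-qlin} and confirm that restricting $\lambda_j$ to $[0,n_j+1]$ in~(2), together with Grothendieck vanishing, covers all of them --- the crux being the inequalities $|\cc|\leq k-1$, $c_l\leq n_l+1$, and $k\leq|\nn|+r$.
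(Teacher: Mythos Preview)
Your proposal is correct and follows essentially the same approach as the paper: re-run the proofs of Theorem~\ref{thm:reg-to-qlin} and Lemma~\ref{lem:BES-reg-to-qlin}, observing that the only local cohomology groups consulted are those with degrees bounded coordinatewise by $\nn+\one$ (coming from twists in the resolutions of the $\Om_{\PP\nn}^\aa(\aa)$), so hypothesis~(2) suffices, and then conclude via Theorem~\ref{thm:quasilinear-conjecture}. Your treatment is in fact more careful than the paper's brief sketch---you explicitly pass to $N=M_{\geq\zero}$ to secure $\HH_B^0(N)=0$ for Theorem~\ref{thm:qlin-to-reg}, and you close the case $k>|\nn|+r$ with Grothendieck vanishing---but the substance is the same.
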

\begin{proof}
  The only difference between \eqref{item:Daniel-def} above and condition \eqref{item:reg-def} in Definition~\ref{def:regular} is the restriction to $\lambda_j\leq n_j+1$.  By the proof of Theorem~\ref{thm:reg-to-qlin}, if $\HH_B^0(M)_\bb=0$ and $M$ satisfies the hypotheses of Proposition~\ref{prop:free-monad} and Lemma~\ref{lem:BES-reg-to-qlin} then $M$ has a quasilinear resolution generated in degree $\dd$ and is thus $\dd$-regular by Theorem~\ref{thm:quasilinear-conjecture}.  In the proof of Lemma~\ref{lem:BES-reg-to-qlin} it is sufficient for the cohomology of $M(\dd)$ to vanish in degrees appearing in the resolution of some $\Om_{\PP\nn}^\aa(\aa)$, which excludes those with coordinates not $\leq\nn+\one$.
\end{proof}

\begin{example}
On $\PP{1}\times\PP{1}\times\PP{1}$\!, to show that a module $M$ is $\zero$-regular using Definition~\ref{def:regular} one must check that $H^{3}_{B}(M)_{\pp}=0$ for $\pp$ in the region with minimal elements
\[(-3,0,0),(-2,-1,0),(-2,0,-1),\ldots,(0,-3,0),\ldots,(0,0,-3).\]
However, Proposition~\ref{prop:dan-def} implies that a smaller region is sufficient. For instance, we need not check that $H^{3}_{B}(M)_{\pp}=0$ for $\pp$ equal to each of $(-3,0,0)$, $(0,-3,0)$, and $(0,0,-3)$.
\end{example}

\begin{remark}
  One may also deduce Proposition~\ref{prop:dan-def} from the proofs in \cite{BES20} without the hypothesis that $\HH_B^0(M)_\dd = 0$.
\end{remark}


\subsection{Quasilinearity Implies Regularity}\label{sec:qlin-to-reg}

We will now prove the reverse implication of Theorem~\ref{thm:quasilinear-conjecture}, namely that if $M_{\geq\dd}$ has a quasilinear free resolution generated in degree $\dd$ then $M$ is $\dd$-regular. We use a hypercohomology spectral sequence argument, which relates the local cohomology of $M$ to the local cohomology of the terms in a resolution for $M_{\geq\dd}$.

The following lemma will show that entire diagonals in our spectral sequence vanish when the resolution is quasilinear.  Thus the local cohomology modules $H_B^i(M)$ to which the diagonals converge also vanish in the same degrees.

\begin{lemma}\label{lem:structure-sheaf}
	If $i,j\in \NN$ then $\HH_B^{i+j+1}(S)_{\aa+\bb}=0$ for all $\aa\in L_i(\zero)$ and all $\bb\in Q_j(\zero)$.
\end{lemma}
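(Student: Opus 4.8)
The plan is to make everything explicit: describe the multigraded pieces of $\HH_B^\bullet(S)$, translate the conditions $\aa\in L_i(\zero)$ and $\bb\in Q_j(\zero)$ into linear inequalities via Remark~\ref{rem:describeL}, and finish with a counting argument. First I would record that $\HH_B^0(S)=\HH_B^1(S)=0$ (the latter because $S\cong\Gs(\o_{\PP\nn})$) and that for $p\ge 2$ there is a natural isomorphism $\HH_B^p(S)_\cc\cong\HH^{p-1}(\PP\nn,\o_{\PP\nn}(\cc))$. Applying the Künneth formula across the factors of $\PP\nn$ together with the classical computation of the cohomology of line bundles on a single $\PP{n}$, this group is nonzero (for $p\ge 2$) if and only if there is a subset $T\subseteq\{1,\dots,r\}$ with $\sum_{k\in T}n_k=p-1$ such that $c_k\le -n_k-1$ for every $k\in T$ (so that necessarily $n_k\ge 1$ for $k\in T$; the complementary coordinates satisfy $c_k\ge 0$, but that fact will not be needed). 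Here $T$ records the factors contributing top cohomology.

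Now fix $i,j\in\NN$, $\aa\in L_i(\zero)$, $\bb\in Q_j(\zero)$, and suppose toward a contradiction that $\HH_B^{i+j+1}(S)_{\aa+\bb}\ne 0$. If $i=j=0$ this contradicts $\HH_B^1(S)=0$, so assume $i+j\ge 1$ and pick $T$ as above with $\sum_{k\in T}n_k=i+j$; in particular $T\ne\emptyset$. Set $\alpha_k=\max(0,-a_k)$ and $\beta_k=\max(0,-1-b_k)$, so that $a_k\ge-\alpha_k$ and $b_k\ge-1-\beta_k$. For $k\in T$ the nonvanishing gives $a_k+b_k\le -n_k-1$, hence $-\alpha_k-\beta_k-1\le -n_k-1$, i.e.\ $\alpha_k+\beta_k\ge n_k$. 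On the other hand Remark~\ref{rem:describeL} identifies $\aa\in L_i(\zero)$ with $\sum_k\alpha_k\le i$, and for $j\ge 1$ identifies $\bb\in Q_j(\zero)=L_{j-1}(-\one)$ with $\sum_k\beta_k\le j-1$. Summing the inequalities $\alpha_k+\beta_k\ge n_k$ over $k\in T$ then yields
\[ i+j=\sum_{k\in T}n_k\le\sum_{k\in T}(\alpha_k+\beta_k)\le\sum_k\alpha_k+\sum_k\beta_k\le i+(j-1), \]
a contradiction.

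It remains to treat $j=0$, where $Q_0(\zero)=\NN^r$ forces $\bb\ge\zero$ and hence $\beta_k=0$ for all $k$; then $\aa+\bb\ge\aa$ gives $a_k\le c_k\le -n_k-1$, so $\alpha_k\ge n_k+1$ for each $k\in T$, whence
\[ i=\sum_{k\in T}n_k<\sum_{k\in T}(n_k+1)\le\sum_{k\in T}\alpha_k\le\sum_k\alpha_k\le i, \]
again a contradiction (using $T\ne\emptyset$, which holds since $i=i+j\ge 1$ forces $\sum_{k\in T}n_k\ge 1$). I expect the only delicate points to be bookkeeping: stating the Künneth/Bott nonvanishing criterion cleanly and observing that it forces $n_k\ge 1$ on $T$, so that any degenerate $\PP0$ factors are harmless; and handling the boundary case $j=0$ on its own, since there the naive pigeonhole does not quite close and one must instead use the sharper bound $c_k\le -n_k-1$ (rather than $c_k\le -n_k$) to recover the missing $+1$. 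Everything else is a routine computation.
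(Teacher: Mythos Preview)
Your proof is correct and follows essentially the same strategy as the paper's: describe $\HH_B^{\bullet}(S)$ via K\"unneth, then run a pigeonhole argument on the coordinates indexed by the set $T$ of factors contributing top cohomology. The one organizational difference is that the paper first collapses the two parameters by observing the sumset identity $L_i(\zero)+Q_j(\zero)=L_{i+j-1}(-\one)$ (for $j\ge 1$), thereby reducing to the single statement that $\HH^k(\PP\nn,\o_{\PP\nn}(\cc))=0$ for $k\ge 1$ and $\cc\in L_{k-1}(-\one)$; this avoids tracking $\aa$ and $\bb$ separately and makes the $j=0$ case fall out uniformly (since $L_i(\zero)+\NN^r\subseteq L_{i-1}(-\one)$), whereas you handle $j=0$ by a direct ad hoc bound. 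Both routes are short and the underlying inequality is the same.
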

\begin{proof}
	Note that $L_i(\zero)+Q_j(\zero)=L_i(\zero)+L_{j-1}(-\one)=L_{i+j-1}(-\one)$ as sets.  We also have $\HH_B^0(S)=\HH_B^1(S)=0$, so it suffices to show that $\HH_B^{k+1}(S)_\cc=\HH^k\left(\PP\nn\!, \OO_{\PP\nn}(\cc)\right)=0$ for $k\geq 1$ and $\cc\in L_{k-1}(-\one)$.

	The cohomology of $\OO_{\PP\nn}$ is given by the K\"{u}nneth formula.  Fix a nonempty set of indices $J\subseteq\{1,\ldots,r\}$ and consider the term
	\[\left[\bigotimes_{j\in J} \HH^{n_j}\left(\PP{n_j},\OO_{\PP{n_j}}(d_j)\right)\right]\otimes\left[\bigotimes_{j\notin J}\HH^0\left(\PP{n_j},\OO_{\PP{n_j}}(d_j)\right)\right],\]
	which contributes to $\HH^k\left(\PP\nn\!, \OO_{\PP\nn}(\cc)\right)$ for $k=\sum_{j\in J} n_j$.  It will be nonzero if and only if $d_j\leq -n_j-1$ for $j\in J$ and $d_j\geq 0$ for $j\notin J$.  If $\cc\in L_{k-1}(-\one)$ then
	\[\cc\geq-\one-\lambda_1\ee_1-\cdots-\lambda_r\ee_r\]
	for some $\lambda_i$ with $\sum\lambda_i=k-1=-1+\sum_{j\in J} n_j$.  It is not possible for the right side to have components $\leq -n_j-1$ for all $j\in J$.  Since all cohomology of $\OO_{\PP\nn}$ arises in this way, the lemma follows.
\end{proof}

In \cite[Thm.~2.9]{BES20} Berkesch, Erman, and Smith show for $M$ with $\HH_B^0(M)=\HH_B^1(M)=0$ that $M$ is $\dd$-regular if and only if $M$ has a virtual resolution $F_\b$ so that the degrees of the generators of $F(\dd)_\b$ are at most those appearing in the minimal free resolution of $S/B$.  This Betti number condition is stronger than quasilinearity, but the additional strength is not used in their proof, so the existence of such a virtual resolution is equivalent to the existence of a quasilinear one.

Since a resolution of $M_{\geq\dd}$ is a type of virtual resolution, the reverse implication of Theorem~\ref{thm:quasilinear-conjecture} mostly reduces to this result.  We present a modified proof for completeness. In particular, we do not need to require $\HH_B^1(M)=0$ because we have more information about the cokernel of our resolution.

From this perspective Theorem~\ref{thm:quasilinear-conjecture} says that the regularity of $M$ is determined not only by the Betti numbers of its virtual resolutions, but by the Betti numbers of only those virtual resolutions that are actually minimal free resolutions of truncations of $M$\!.  Thus we provide an explicit method for checking whether $M$ is $\dd$-regular.

\begin{theorem}\label{thm:qlin-to-reg}
	Let $M$ be a finitely generated $\ZZ^r$-graded $S$-module such that $\HH_B^0(M)=0$.  If $M_{\geq\dd}$ has a quasilinear free resolution $F_\b$ with $F_0$ generated in degree $\dd$, then $M$ is $\dd$-regular.
\end{theorem}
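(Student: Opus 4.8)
The plan is to run a hypercohomology spectral sequence coming from the \v{C}ech complex applied to a quasilinear resolution $F_\b$ of $M_{\geq\dd}$, and to read off the vanishing of $\HH_B^i(M)$ in the required degrees from Lemma~\ref{lem:structure-sheaf}. Without loss of generality, after twisting we may take $\dd=\zero$, so that $F_0=S^{b_0}$ and the twists $-\bb$ appearing in $F_j$ satisfy $-\bb\in Q_j(\zero)$, i.e.\ $\bb\in -Q_j(\zero)$; equivalently each such $\bb$ lies in a translate making $\HH_B^k(S)_{\aa-\bb}$ accessible to the lemma.

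First I would form the double complex $\cech^\b(B, F_\b)$ with the \v{C}ech differential in one direction and the (sheafified-exact, hence quasi-isomorphic-to-$\shM_{\geq\dd}=\shM$) resolution differential in the other. One of the two spectral sequences collapses to $\HH_B^\b$ of the terms $F_j$ in one axis; the other converges to $\HH_B^\b(M_{\geq\dd})$, which by the discussion preceding Lemma~\ref{lem:truncation-regularity} agrees with $\HH_B^\b(M)$ in degrees $\geq\dd$ for $i\geq 1$ (and $\HH_B^0(M_{\geq\dd})=0$ since $\HH_B^0(M)=0$). Concretely, the first-quadrant-type spectral sequence has $E_1$ page built from $\HH_B^q(F_p) = \bigoplus \HH_B^q(S(-\bb))_{\aa}=\bigoplus\HH_B^q(S)_{\aa-\bb}$ where $\bb$ ranges over the twists in $F_p$, so $-\bb\in Q_p(\zero)$.

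Next, the heart of the argument: fix $i>0$ and a degree $\pp\in L_{i-1}(\dd)=L_{i-1}(\zero)$ (say $\pp = \aa$ with $\aa\in L_{i-1}(\zero)$); I want $\HH_B^i(M)_\pp=0$. Every contribution to the abutment $\HH_B^i(M)_\pp$ comes from a term $E_1^{p,q}$ with $p+q=i$ (or, tracing the total degree through the indexing of the monad, a term $\HH_B^{q}(S)_{\pp-\bb}$ with $q = i + \text{(homological shift)}$ and $-\bb\in Q_p(\zero)$). The point is that whenever $-\bb\in Q_j(\zero)$ we can write $-\bb \in L_{j-1}(-\one)$ by definition of $Q$, so $\pp-\bb \in L_{i-1}(\zero)+L_{j-1}(-\one)$; then Lemma~\ref{lem:structure-sheaf} (with the roles $\aa\in L_{i-1}(\zero)$, $\bb'\in Q_j(\zero)$) forces $\HH_B^{(i-1)+j+1}(S)_{\pp-\bb} = \HH_B^{i+j}(S)_{\pp-\bb}=0$, which is exactly the cohomological degree that feeds the $i$-th diagonal of the total complex. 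Thus every entry on the relevant diagonal of $E_1$ vanishes in degree $\pp$, hence so does the abutment, giving $\HH_B^i(M)_\pp=0$ for all $\pp\in L_{i-1}(\dd)$, i.e.\ condition \eqref{item:reg-def} of Definition~\ref{def:regular}. The $\HH_B^0$ condition is immediate from $\HH_B^0(M)=0$, and since a resolution of $M_{\geq\dd}$ is in particular a virtual resolution of $M$, the argument matches the structure of \cite[Thm.~2.9]{BES20} while only using the weaker quasilinearity hypothesis and not requiring $\HH_B^1(M)=0$.

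\textbf{Main obstacle.} I expect the bookkeeping of the spectral sequence indices to be the delicate point: one must carefully match the homological index of $F_\b$, the \v{C}ech cohomological index $q$, and the target local cohomology index $i$ so that the combinatorial inequality "$\aa\in L_{i-1}(\zero)$ and $-\bb\in Q_j(\zero)$ imply $\HH_B^{i+j}(S)_{\aa-\bb}=0$" is precisely what Lemma~\ref{lem:structure-sheaf} delivers — in particular checking that no off-diagonal differentials on later pages can resurrect a nonzero class in the wrong degree, and handling the boundary cases $i=1$ (where one also uses $\HH_B^0(M)=0$ and $\HH_B^1(S)=0$) and the terms where $F_p$ is generated in degree exactly $\zero$. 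Once the indexing is pinned down, each vanishing is a direct citation of Lemma~\ref{lem:structure-sheaf}.
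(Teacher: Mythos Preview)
Your approach is correct and essentially identical to the paper's: form the \v{C}ech--resolution double complex, observe that the $E_1$ page of the vertical spectral sequence consists of terms $\HH_B^{i+j}(F_j)_\pp$, and kill the entire diagonal in degree $\pp\in L_{i-1}(\zero)$ by a direct application of Lemma~\ref{lem:structure-sheaf}. Two small cleanups: (i) rather than comparing $\HH_B^i(M_{\geq\dd})$ and $\HH_B^i(M)$ degree-by-degree (your statement ``agrees in degrees $\geq\dd$'' is too weak for $i\geq2$, where you need vanishing at $\pp\notin\dd+\NN^r$; in fact $\HH_B^i(M)=\HH_B^i(M_{\geq\dd})$ in \emph{all} degrees for $i\geq2$), the paper simply invokes Lemma~\ref{lem:truncation-regularity} to replace $M$ by $M_{\geq\zero}$ at the outset; (ii) your worry about later-page differentials is unnecessary, since every $E_r$ term on a fixed diagonal is a subquotient of the corresponding $E_1$ term, so vanishing on the whole $E_1$ diagonal forces vanishing of the abutment.
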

\begin{proof}
	Without loss of generality we may assume that $\dd=\zero$ and $M=M_{\geq\zero}$ (see Lemma~\ref{lem:truncation-regularity}).

	Let $F_\b$ be a quasilinear minimal free resolution of $M$\!, so that the twists of $F_j$ are in $Q_j(\zero)$.  Then the spectral sequence of the double complex $E^{\b,\b}$ with terms
	\[ E^{s,t} = \cech^t(B, F_{-s}) \]
	converges to the cohomology $\HH_B^i(M)$ of $M$ in total degree $i$.  The first page of the vertical spectral sequence has terms $\HH_B^t(F_{-s})$, so $\HH_B^{i+j}(F_j)_\aa=0$ for all $j$ (i.e., for all $(s,t)=(-j,i+j)$) implies $\HH_B^i(M)_\aa=0$.

	Therefore it suffices to show that $\HH_B^{i+j}(S(\bb))_\aa=0$ for $i\geq 1$ and all $\aa\in L_{i-1}(\zero)$ and $\bb\in Q_j(\zero)$, as is done in Lemma~\ref{lem:structure-sheaf}.
\end{proof}


\pagebreak[1]
\section{Multigraded Regularity and Betti Numbers}\label{sec:betti}

Unlike in the single graded setting, it is possible for two modules on a product of projective spaces to have the same multigraded Betti numbers but different multigraded regularities.

\begin{example}\label{ex:same-betti}
  Let $M$ be the module on $\PP1\times\PP1$ with resolution
  \[ S(-1,0)^2\oplus S(0,-1)^2\gets S(-1,-1)^4\gets 0 \]
  given in Example~\ref{ex:not-linear}.  Computation shows that $M$ is $(1,0)$-regular but not $(0,1)$-regular. Notice that all of the twists appearing in the minimal free resolution of $M$ are symmetric with respect to the factors of $\PP1\times\PP1$\!. Hence the cokernel $N$ given by exchanging $x$ and $y$ in the presentation matrix has the same multigraded Betti numbers as $M$\!. However $N$ is not $(1,0)$-regular because $M$ was not $(0,1)$-regular.
\end{example}

\begin{remark}
	Example~\ref{ex:same-betti} answers a question of Botbol and Chardin \cite[Ques.~1.2]{BC17}.
\end{remark}


\subsection{Inner Bound from Betti Numbers}\label{sec:betti-inner-bounds}

While the multigraded Betti numbers of a module do not determine its regularity, in this section we show that they do determine a subset of the regularity.  In particular, the following lemma restricts the possible Betti numbers of a truncation of $M$ given the Betti numbers of $M$\!.  Intuitively, it states that the degrees of Betti numbers of $M_{\geq\dd}$ come from the maximum of $\dd$ and the degrees of Betti numbers of $M$\!, possibly after adding some linear terms.

\begin{lemma}\label{lem:truncation-betti}
	Let $M$ be a $\ZZ^r$-graded $S$-module.  If $M_{\geq\dd}$ has $\Tor_{m'}^S(M_{\geq\dd},\kk)_{\bb'}\neq 0$ for some $\bb'\in\ZZ^r$ then there exist $\bb\leq\bb'$ and $m\leq m'$ such that $\Tor_m^S(M,\kk)_\bb\neq 0$ and $|\bb'-\cc|\leq m'-m$ where $\cc=\max\{\bb,\dd\}$.
\end{lemma}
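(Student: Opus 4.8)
The plan is to build a (possibly nonminimal) free resolution of $M_{\geq\dd}$ out of term-by-term truncations of the minimal free resolution $G_\b$ of $M$, and then read off the constraint on the degrees of its syzygies. First I would recall that if $G_\b\to M$ is the minimal free resolution, then by Lemma~\ref{lem:truncation-functor} the truncation $(G_\b)_{\geq\dd}$ is a (generally nonfree) resolution of $M_{\geq\dd}$. The key input is a description of the truncation of a free module: if $G_m=\bigoplus S(-\bb)$ with the sum over $\bb\in\beta_m(M)$ (with multiplicity), then $S(-\bb)_{\geq\dd}=\big(S_{\geq\dd-\bb}\big)(-\bb)$, and $S_{\geq\ee}$ is generated in the degrees $\max\{\ee,\zero\}+\lambda_1\ee_1+\cdots+\lambda_r\ee_r$ with $|\lambda|$ small — more precisely, $S_{\geq\ee}$ has a free resolution whose $i$-th term has generators in degrees $\cc'$ with $\cc'\geq\max\{\ee,\zero\}$ and $|\cc'-\max\{\ee,\zero\}|\leq i$ (this is exactly the ``staircase'' structure of truncations of $S$, which also underlies the regions $L_i$, $Q_i$). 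Splicing these resolutions of the $S(-\bb)_{\geq\dd}$ into a resolution of $(G_\b)_{\geq\dd}$ via the standard iterated mapping-cone/horseshoe construction yields a free resolution $\widetilde F_\b$ of $M_{\geq\dd}$ in which every generator of $\widetilde F_{m'}$ appears in some summand coming from $G_m$ with $m\leq m'$, twisted so that its degree $\bb'$ satisfies $\bb'\geq\cc:=\max\{\bb,\dd\}$ and $|\bb'-\cc|\leq m'-m$.

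The second step is to pass from this constructed resolution back to the minimal one. Since $\widetilde F_\b$ is a (not necessarily minimal) free resolution of $M_{\geq\dd}$, the minimal free resolution is a direct summand: every twist $\bb'$ with $\Tor_{m'}^S(M_{\geq\dd},\kk)_{\bb'}\neq 0$ occurs among the twists of $\widetilde F_{m'}$. Combining this with the degree bound from the construction gives the existence of $\bb\in\beta_m(M)$ with $m\leq m'$ and $|\bb'-\max\{\bb,\dd\}|\leq m'-m$; and $\bb\leq\bb'$ follows because $\bb\leq\max\{\bb,\dd\}=\cc\leq\bb'$. This is exactly the statement of the lemma.

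The main obstacle is the bookkeeping in the mapping-cone construction: one must verify that when the resolutions of the truncated free modules $S(-\bb)_{\geq\dd}$ are spliced together, the homological degrees and twists add up correctly — i.e. that a generator contributed by the length-$k$ part of the resolution of $S(-\bb)_{\geq\dd}$ (which sits in homological degree $m+k$ of $\widetilde F_\b$) really does have twist $\bb'$ with $|\bb'-\max\{\bb,\dd\}|\leq k$. The cleanest way to handle this is to do induction on the length of $G_\b$ using the short exact sequences $0\to (\ker)_{\geq\dd}\to (G_m)_{\geq\dd}\to (\operatorname{im})_{\geq\dd}\to 0$ together with the horseshoe lemma, carrying the inequality $|\bb'-\cc|\leq m'-m$ as the inductive hypothesis; the base case is precisely the explicit resolution of $S_{\geq\ee}$, whose staircase shape must be pinned down carefully. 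A secondary subtlety is that truncation does not commute with taking minimal resolutions, so one cannot assume $\widetilde F_\b$ is minimal — but this only helps, since it merely means the bound from $\widetilde F_\b$ is an over-approximation of $\beta_{m'}(M_{\geq\dd})$, which is all we need.
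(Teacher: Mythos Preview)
Your proposal is correct and follows essentially the same route as the paper: truncate the minimal free resolution of $M$ termwise, resolve each truncated free module, splice via iterated mapping cones into a (nonminimal) free resolution of $M_{\geq\dd}$, and read off the degree constraint. The paper handles your ``base case'' --- that $S(-\bb)_{\geq\dd}$ has a linear resolution generated in degree $\max\{\bb,\dd\}$ --- by a forward reference to Corollary~\ref{cor:S-reg}, which is exactly the staircase fact you flag as needing to be pinned down.
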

\begin{proof}
	Let $0 \gets M \gets F_0 \gets F_1 \gets \cdots$ be the minimal free resolution of $M$\!.  Then the termwise truncation
	\( 0 \gets M_{\geq\dd} \gets (F_0)_{\geq\dd} \gets (F_1)_{\geq\dd} \gets \cdots \)
	is also exact by Lemma~\ref{lem:truncation-functor}.  For each $i$, let $G^i_\b$ be a minimal free resolution of $(F_i)_{\geq\dd}$.
  \[\begin{tikzcd}[column sep=small, row sep=small]
	& \vdots & \vdots & \vdots \\
	& {G^0_1} & {G^1_1} & {G^2_1} \\
	& {G^0_0} & {G^1_0} & {G^2_0} \\
	0 & {(F_0)_{\geq\dd}} & {(F_1)_{\geq\dd}} & {(F_2)_{\geq\dd}} & \cdots \\
	& 0 & 0 & 0
	\arrow[from=4-3, to=4-2]
	\arrow[from=4-2, to=4-1]
	\arrow[from=4-4, to=4-3]
	\arrow[from=1-2, to=2-2]
	\arrow[from=2-2, to=3-2]
	\arrow[from=3-2, to=4-2]
	\arrow[from=4-2, to=5-2]
	\arrow[from=1-3, to=2-3]
	\arrow[from=2-3, to=3-3]
	\arrow[from=3-3, to=4-3]
	\arrow[from=4-3, to=5-3]
	\arrow[from=4-5, to=4-4]
	\arrow[from=1-4, to=2-4]
	\arrow[from=2-4, to=3-4]
	\arrow[from=3-4, to=4-4]
	\arrow[from=4-4, to=5-4]
  \end{tikzcd}\]
	We will see in Corollary~\ref{cor:S-reg} that $S(-\bb)_{\geq\dd}$ has a linear resolution for all $\bb\in\ZZ^r$\!.  Thus the $G^i_\b$ are linear.  By taking iterated mapping cones we can construct a free resolution of $M_{\geq\dd}$ with terms
	\begin{align}\label{eq:nonmin-res}
		0 \gets G^0_0 \gets G^0_1 \oplus G^1_0 \gets G^0_2 \oplus G^1_1 \oplus G^2_0 \gets \cdots.
	\end{align}
	Then $\bb'$ corresponds to the degree of a generator of some $G^i_j$ with $i+j=m'$.  Since $G^i_\b$ is linear, there is a minimal generator of $(F_i)_{\geq\dd}$ with degree $\cc$ such that $|\bb'-\cc|=j$.

	However the generators of $(F_i)_{\geq\dd}$ have degrees equal to $\max\{\bb,\dd\}$ for degrees $\bb$ of generators of $F_i$.  These correspond to $\bb\in\ZZ^r$ such that $\Tor_i^S(M,\kk)_\bb\neq 0$.  Thus the lemma holds for $m=i$, so that $m'-m=j=|\bb'-\cc|$ as desired.
\end{proof}

Lemma~\ref{lem:truncation-betti} shows that each Betti number of $M_{\geq\dd}$ comes from a Betti number of $M$ in a predictable way.  Note that the process cannot be reversed---not all Betti numbers of $M$ produce minimal Betti numbers of $M_{\geq\dd}$.  However, the Betti numbers of $M$ limit the degrees where a nonlinear truncation could exist.  The following theorem identifies such degrees.
\begin{theorem}\label{thm:betti-truncations}
	Let $M$ be a $\ZZ^r$-graded $S$-module.  For all $\dd\in\bigcap L_m(\bb)$, the truncation $M_{\geq \dd}$ has a linear resolution generated in degree $\dd$, where the intersection is over all $m$ and all $\bb$ with $\Tor_m^S(M,\kk)_\bb\neq 0$.
\end{theorem}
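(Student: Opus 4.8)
The plan is to use Lemma~\ref{lem:truncation-betti} to control the degrees of the minimal generators of $M_{\geq\dd}$ for each homological index, and then to combine this with the linearity of the $S(-\bb)_{\geq\dd}$ resolutions (Corollary~\ref{cor:S-reg}) to bound the twists appearing in the minimal free resolution of $M_{\geq\dd}$. First I would recall from the proof of Lemma~\ref{lem:truncation-betti} the nonminimal resolution \eqref{eq:nonmin-res} of $M_{\geq\dd}$ built from iterated mapping cones of the linear resolutions $G^i_\b$ of the truncated terms $(F_i)_{\geq\dd}$. A minimal free resolution of $M_{\geq\dd}$ is a direct summand of this one, so it suffices to bound the twists in \eqref{eq:nonmin-res}: any twist $-\bb'$ appearing in homological degree $m'$ of the minimal resolution appears in some $G^i_j$ with $i+j=m'$, so by Lemma~\ref{lem:truncation-betti} there is $\bb$ with $\Tor_m^S(M,\kk)_\bb\neq0$, $m\le m'$, and $|\bb'-\cc|\le m'-m$, where $\cc=\max\{\bb,\dd\}$.

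Next I would show $\bb'\in L_{m'}(-\dd)$, i.e., that the sum of the positive components of $-\dd-\bb'$ is at most $m'$ (using the description in Remark~\ref{rem:describeL}, applied to $L_{m'}(-\dd)$: $\bb'\in L_{m'}(-\dd)$ iff the sum of the positive coordinates of $-\dd-\bb'$ is $\le m'$). The hypothesis $\dd\in L_m(\bb)$ means, again by Remark~\ref{rem:describeL}, that the sum of the positive components of $\bb-\dd$ is at most $m$. Now observe $\cc = \max\{\bb,\dd\}\ge\dd$, so $-\dd-\bb'\le \cc-\dd-\bb'\le \cc - \bb'$ has positive-part sum at most $|\cc-\bb'|$ once we know $\cc\ge\bb'$ componentwise --- but $\bb'$ need not be $\le\cc$. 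More carefully: write $-\dd-\bb' = (\cc-\dd) - (\cc-\bb')$; the positive part of $\cc-\dd$ has coordinate sum equal to the positive-part sum of $\bb-\dd$ (since $\cc=\max\{\bb,\dd\}$, the coordinates where $\cc\ne\dd$ are exactly those where $\bb>\dd$, and there $\cc-\dd=\bb-\dd$), which is $\le m$ by hypothesis. And the sum of the positive parts of $\bb'-\cc$ is at most $|\bb'-\cc|\le m'-m$. Since the positive part of a difference $u-v$ is bounded coordinatewise by (positive part of $u$) $+$ (negative part of $v$) $=$ (positive part of $u$) $+$ (positive part of $v-u$)... I would do this elementary inequality chain carefully: for vectors $p,q$, the sum of positive coordinates of $p-q$ is at most (sum of positive coords of $p$) plus (sum of positive coords of $-q$) $\le$ (sum of positive coords of $p$) $+\,|q|_-$; applying this with $p=\cc-\dd\ge0$ (so it equals its positive part, with sum $\le m$) and $q = \cc-\bb'$ (so the sum of positive coords of $-q = \bb'-\cc$ is $\le |\bb'-\cc|\le m'-m$), the total is $\le m + (m'-m) = m'$. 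Hence $\bb'\in L_{m'}(-\dd)$.

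Finally, since $\dd\in L_0(\bb)$ for every $\bb$ with a nonzero $0$-th Betti number forces (by Remark~\ref{rem:describeL}, $L_0(\bb)=\bb+\NN^r$, so $\dd\ge\bb$) that $\dd\ge\bb$ for every degree of a generator of $M$; combined with the fact that the generators of $(F_0)_{\geq\dd}$ sit in degrees $\max\{\bb,\dd\}=\dd$, the module $M_{\geq\dd}$ is generated in the single degree $\dd$. Together with the twist bound $\bb'\in L_{m'}(-\dd)$ in every homological degree $m'$, this is exactly the condition for the minimal free resolution of $M_{\geq\dd}$ to be linear with $F_0$ generated in degree $\dd$. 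I expect the main obstacle to be the bookkeeping in the positive-part inequality relating $-\dd-\bb'$, $\cc-\dd$, and $\bb'-\cc$ — in particular the observation that the positive-part sum of $\cc-\dd$ equals that of $\bb-\dd$, which is where the hypothesis $\dd\in L_m(\bb)$ actually gets used, and the subtlety that $\bb'$ is not assumed comparable to $\cc$; everything else (mapping cones, the summand argument, the degree-$0$ generation) is routine given the results already in the excerpt.
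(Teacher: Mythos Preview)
Your approach is essentially the paper's: both arguments hinge on Lemma~\ref{lem:truncation-betti} together with the positive-part description of $L_i$ from Remark~\ref{rem:describeL}. The paper argues contrapositively and first reduces to $\dd=\zero$, so that $\cc=\max\{\bb,\zero\}$ and the whole computation collapses to the single line $|\cc|=|\bb'|-|\bb'-\cc|>m'-(m'-m)=m$; your direct version without that reduction is correct but leads to the more elaborate positive-part inequality you sketch.

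Two points to clean up in your write-up. First, a sign slip: the linearity condition requires the \emph{twist} $-\bb'$ (not $\bb'$) to lie in $L_{m'}(-\dd)$, which by Remark~\ref{rem:describeL} says that the positive coordinates of $\bb'-\dd$ (not $-\dd-\bb'$) sum to at most $m'$. Your decomposition $(\cc-\dd)-(\cc-\bb')$ in fact equals $\bb'-\dd$, so two sign errors cancel and your inequality chain does bound the right quantity. Second, the step ``sum of positive coords of $\bb'-\cc$ is $\le|\bb'-\cc|$'' requires $\bb'\ge\cc$; this holds because $\bb'$ is a generator degree in the linear resolution $G^i_\b$ sitting above the generating degree $\cc=\max\{\bb,\dd\}$ (equivalently, after reducing to $\dd=\zero$, because $\bb'\geq\zero$ and $\bb'\geq\bb$), but you should say so rather than flag $\bb'$ and $\cc$ as possibly incomparable.
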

\begin{proof}
	We may assume that $\dd=\zero$.  Suppose instead that $M_{\geq\zero}$ does not have a linear resolution generated in degree $\zero$.  Then there exist $\bb'\in\NN^r$ and $m'\in\ZZ$ such that $\Tor_{m'}^S(M_{\geq\zero},\kk)_{\bb'}\neq 0$ and $|\bb'|>m'$.

	By Lemma~\ref{lem:truncation-betti} there exist $\bb$ and $m$ so that $\Tor_m^S(M,\kk)_\bb\neq 0$ and $|\bb'-\cc|\leq m'-m$ where $\cc=\max\{\bb,\zero\}$.  The sum of the positive components of $\bb$ is
	\[|\cc|=|\bb'|-|\bb'-\cc|>m'-(m'-m)=m\]
	so $\zero\notin L_m(\bb)$ (see Remark~\ref{rem:describeL}).
\end{proof}

An analogous statement to Theorem~\ref{thm:betti-truncations} exists for truncations with quasilinear resolutions.  By Theorem~\ref{thm:quasilinear-conjecture} it also gives a subset of the multigraded regularity.  We will see in Section~\ref{sec:complete-intersections} that this inner bound is sharp.
\begin{theorem}\label{thm:betti-regularity}
	Let $M$ be a $\ZZ^r$-graded $S$-module.  For all $\dd\in\bigcap Q_m(\bb)$, the truncation $M_{\geq \dd}$ has a quasilinear resolution generated in degree $\dd$, where the intersection is over all $m$ and all $\bb$ with $\Tor_m^S(M,\kk)_\bb\neq 0$.
\end{theorem}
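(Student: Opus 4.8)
The plan is to transcribe the proof of Theorem~\ref{thm:betti-truncations} into the $Q$-region setting, using Lemma~\ref{lem:truncation-betti} as the engine and Remark~\ref{rem:describeL} to convert membership in $L$- and $Q$-regions into arithmetic about sums of positive components. After replacing $M$ by $M(\dd)$ we may assume $\dd=\zero$, so the hypothesis becomes $\zero\in Q_m(\bb)$ for every $m$ and every $\bb$ with $\Tor_m^S(M,\kk)_\bb\neq0$. This has two consequences worth isolating up front. Applied with $m=0$, where $Q_0(\bb)=\bb+\NN^r$, it forces every generator degree of $M$ to be $\leq\zero$; since $S_\uu S_\vv=S_{\uu+\vv}$ for $\uu,\vv\geq\zero$ this makes $M_{\geq\zero}$ generated in degree $\zero$, so the first requirement of Definition~\ref{def:quasilinear-resolution} is automatic for the minimal resolution $F_\b$ of $M_{\geq\zero}$. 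Applied with $m>0$, where $Q_m(\bb)=L_{m-1}(\bb-\one)$, it says by Remark~\ref{rem:describeL} that the sum of the positive components of $\bb-\one$ is at most $m-1$.

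Next I would argue by contradiction. Since $F_0$ is already generated in degree $\zero$, if $F_\b$ is not quasilinear then some twist $-\bb'$ violating the $Q$-condition occurs in $F_j$ for a $j>0$; equivalently there is $\bb'\in\NN^k$ with $\Tor_j^S(M_{\geq\zero},\kk)_{\bb'}\neq0$ and $-\bb'\notin Q_j(\zero)=L_{j-1}(-\one)$. By Remark~\ref{rem:describeL} the sum of the positive components of $\bb'-\one$ is at least $j$; writing $\ell'$ for the number of positive coordinates of $\bb'$ and using $\bb'\geq\zero$, this reads $|\bb'|-\ell'\geq j$ (in particular $\ell'\geq1$). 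Now apply Lemma~\ref{lem:truncation-betti} with $m'=j$ to produce $\bb\leq\bb'$ and $m\leq j$ with $\Tor_m^S(M,\kk)_\bb\neq0$ and $|\bb'-\cc|\leq j-m$, where $\cc=\max\{\bb,\zero\}$. Because $\bb\leq\bb'$ and $\bb'\geq\zero$ we have $\cc\leq\bb'$, so $|\bb'-\cc|=|\bb'|-|\cc|$ and hence $|\cc|\geq|\bb'|-(j-m)\geq\ell'+m$; and because $\bb\leq\bb'$, the number $\ell$ of positive coordinates of $\bb$ satisfies $\ell\leq\ell'$.

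To finish I would feed this back into the $Q$-regions, splitting on $m$. The sum of the positive components of $\bb$ is $|\cc|$, and the sum of the positive components of $\bb-\one$ is $|\cc|-\ell$. If $m=0$ then $|\cc|\geq\ell'\geq1$, so $\bb$ has a positive coordinate and $\zero\notin\bb+\NN^r=Q_0(\bb)$. If $m\geq1$ then $|\cc|-\ell\geq(\ell'+m)-\ell\geq m>m-1$, so $\zero\notin L_{m-1}(\bb-\one)=Q_m(\bb)$ by Remark~\ref{rem:describeL}. In either case $\zero\notin Q_m(\bb)$, contradicting the hypothesis, and the theorem follows. The only genuinely delicate point — and the one I would verify most carefully — is this index bookkeeping: that $\cc\leq\bb'$ so the norms subtract, and that truncation cannot increase the number of positive coordinates ($\ell\leq\ell'$). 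Everything else is a routine translation of the $L$-region argument of Theorem~\ref{thm:betti-truncations}.
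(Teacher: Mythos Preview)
The proposal is correct and follows essentially the same argument as the paper: reduce to $\dd=\zero$, invoke Lemma~\ref{lem:truncation-betti} on a putative nonquasilinear syzygy $\bb'$ of $M_{\geq\zero}$, and compare the sum of positive components of $\bb-\one$ to $m-1$ via Remark~\ref{rem:describeL}. Your bookkeeping is slightly different (you set $\ell$ to be the number of positive coordinates of $\bb$, whereas the paper uses the complementary count via $\cc'=\max\{\bb,\one\}$), and you handle the ``generated in degree $\zero$'' clause upfront from the $m=0$ hypothesis rather than as a separate case, but the logic and the key inequality $|\cc|-\ell\geq m$ are the same.
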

\begin{proof}
	Assume $\dd=\zero$ and suppose instead that $M_{\geq\zero}$ does not have a quasilinear resolution generated in degree $\zero$.  If $M_{\geq\zero}$ is not generated in degree $\zero$ then some generator of $M$ has a degree $\bb$ with a positive coordinate, so that $\zero\notin\bb+\NN^r=Q_0(\bb)$.

	Otherwise there exist $\bb'\in\NN^r$ and $m'\in\ZZ$ such that $\Tor_{m'}^S(M_{\geq\zero},\kk)_{\bb'}\neq 0$ and $|\bb'|>m'+\ell'-1$ where $\ell'$ is the number of nonzero coordinates in $\bb'$.  Thus by Lemma~\ref{lem:truncation-betti} there exist $\bb$ and $m$ so that $\Tor_m^S(M,\kk)_\bb\neq 0$ and $|\bb'-\cc|\leq m'-m$ for $\cc=\max\{\bb,\zero\}$.

	Let $\ell$ be the number of coordinates for which $\cc$ differs from $\cc'=\max\{\bb,\one\}$.  Then $|\cc'|=|\cc|+\ell$, so the sum of the positive components of $\bb-\one$ is
	\begin{align*}
	|\cc'-\one| &= |\cc|+\ell-r\\
	&= |\bb'|-|\bb'-\cc|-r+\ell\\
	&> (m'+\ell'-1)-(m'-m)-r+\ell\\
	&= m-1+\ell'-(r-\ell).
	\end{align*}
	Note that $r-\ell$ is the number of nonzero coordinates in $\cc$.  Since $\bb'\geq\zero$ and $\bb'\geq\bb$ we have $\bb'\geq\cc\geq\zero$, so $\ell'\geq r-\ell$.  Hence the right side of the inequality is $\geq m-1$, so $\zero\notin L_{m-1}(\bb-\one) = Q_m(\bb)$ (see Remark~\ref{rem:describeL}).
\end{proof}

\begin{corollary}\label{cor:betti-reg-bound}
  Let $M$ be a finitely generated $\ZZ^r$-graded $S$-module. If $\HH_B^0(M)=0$, then
  \[ \bigcap_{i\in\NN}\bigcap_{\bb\in\beta_i(M)} Q_i(\bb) \subseteq \reg(M). \]
\end{corollary}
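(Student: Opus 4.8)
The plan is to deduce the containment directly by chaining together two results already established: Theorem~\ref{thm:betti-regularity}, which controls the resolution of a truncation in terms of the Betti numbers of $M$, and the ``quasilinear $\Rightarrow$ regular'' implication of Theorem~\ref{thm:quasilinear-conjecture} (proved as Theorem~\ref{thm:qlin-to-reg}). So this corollary should require essentially no new argument, only the observation that the indexing set in Theorem~\ref{thm:betti-regularity} is exactly $\bigcup_{i\in\NN}\bigl(\{i\}\times\beta_i(M)\bigr)$.

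First I would fix an arbitrary $\dd\in\bigcap_{i\in\NN}\bigcap_{\bb\in\beta_i(M)}Q_i(\bb)$. Unwinding the definition $\beta_i(M)=\{\bb\in\ZZ^r \mid \Tor_i^S(M,\kk)_\bb\neq 0\}$, this says precisely that $\dd\in Q_m(\bb)$ for every pair $(m,\bb)$ with $\Tor_m^S(M,\kk)_\bb\neq 0$, which is the hypothesis of Theorem~\ref{thm:betti-regularity}. Applying that theorem, $M_{\geq\dd}$ admits a quasilinear resolution $F_\b$ with $F_0$ generated in degree $\dd$. (Finite generation of $M$ is what makes the intersection finite, so this is unproblematic.)

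Next, since $\HH_B^0(M)=0$ by hypothesis, Theorem~\ref{thm:qlin-to-reg} applies verbatim to this quasilinear resolution and gives that $M$ is $\dd$-regular, i.e.\ $\dd\in\reg(M)$. As $\dd$ ranged over all of $\bigcap_{i\in\NN}\bigcap_{\bb\in\beta_i(M)}Q_i(\bb)$, this establishes
\[ \bigcap_{i\in\NN}\bigcap_{\bb\in\beta_i(M)} Q_i(\bb) \subseteq \reg(M), \]
as desired.

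I do not anticipate a genuine obstacle here: the content has been front-loaded into Theorems~\ref{thm:betti-regularity} and~\ref{thm:qlin-to-reg}, and the only care needed is to confirm that the two indexing conventions agree and that the saturation hypothesis $\HH_B^0(M)=0$ is invoked exactly where Theorem~\ref{thm:qlin-to-reg} requires it. If anything, one might add a remark that this is the promised statement \eqref{item:reg-bound} of Theorem~\ref{main-thm-betti} together with the observation that it lands inside $\reg(M)$.
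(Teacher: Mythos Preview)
Your proposal is correct and matches the paper's approach: the corollary is stated without proof immediately after Theorem~\ref{thm:betti-regularity}, precisely because it follows by combining that theorem with the quasilinear-implies-regular direction of Theorem~\ref{thm:quasilinear-conjecture} (Theorem~\ref{thm:qlin-to-reg}) under the hypothesis $\HH_B^0(M)=0$. The only quibble is your parenthetical about finite generation making the intersection finite---this is true but irrelevant to the argument, since you argue pointwise for each $\dd$ in the intersection regardless of whether that set is finite.
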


We can now prove Proposition~\ref{prop:invariance}.

\begin{proof}[Proof of Proposition \ref{prop:invariance}]
	Suppose that $M_{\geq\dd}$ has a linear resolution.  We will apply Theorem~\ref{thm:betti-truncations} to $M_{\geq\dd}$ to show that $M_{\geq\dd'}$ has a linear resolution for $\dd'\geq\dd$ as desired.  We may assume that the intersection contains all possible terms that could arise from a linear resolution:
	\[ \bigcap_{i\in\NN}\bigcap_{-\bb\in L_i(-\dd)} L_i(\bb) \]
	Note that $-\bb\in L_i(-\dd)$ if and only if $\dd\in L_i(\bb)$.  Thus $\dd\in L_i(\bb)$ for all $\bb$, so $\dd'$ is in the intersection as well.  For quasilinear resolutions replace $L$ with $Q$ and apply Theorem~\ref{thm:betti-regularity} .
\end{proof}

Other bounds on the multigraded regularity of a module in terms of its Betti numbers exist in the literature. For example, Maclagan and Smith use a long exact sequence argument to bound regularity in \cite[Thm.~1.5, Cor.~7.2]{MS04}. While our theorem has the added hypothesis that $\HH_B^0(M)=0$, it is often sharper than Maclagan and Smith's.

\begin{example}\label{ex:betti-bound-ms}
  In \cite[Ex.~7.6]{MS04} Maclagan and Smith consider the $B$-saturated ideal $I = \langle x_{1,0}-x_{1,1},x_{2,0}-x_{2,1},x_{3,0}-x_{3,1}\rangle \cap \langle x_{1,0}-2x_{1,1},x_{2,0}-2x_{2,1},x_{3,0}-2x_{3,1}\rangle$ on $\PP1\times\PP1\times\PP1$\!. They show that the regularity of $S/I$ is
  \[ \reg(S/I) = \left((1,0,0)+\NN^3\right) \cup \left((0,1,0)+\NN^3\right) \cup \left((0,0,1)+\NN^3\right) \]
  and their bound from the Betti numbers of $S/I$ is
  \[ \left((2,2,1)+\NN^3\right) \cup \left((2,1,2)+\NN^3\right) \cup \left((1,2,2)+\NN^3\right)\subset \reg(S/I). \]
  However, Corollary~\ref{cor:betti-reg-bound} implies that $(1,1,1)+\NN^r\subset\reg(S/I)$, giving a larger inner bound.
\end{example}


\subsection{Regularity of Complete Intersections}\label{sec:complete-intersections}

As an application of Theorems~\ref{main-thm-reg} and \ref{main-thm-betti}, in this section we compute the multigraded regularity of a saturated complete intersection satisfying minor hypotheses on its generators. To do this we make the bound from Corollary~\ref{cor:betti-reg-bound} explicit in the case of complete intersections. We then use our characterization of regularity to prove that the resulting bound is sharp by explicitly constructing truncations outside this region that do not have quasilinear resolutions.

\begin{lemma}\label{lem:Q-inclusions}
  If $\bb,\cc\in\NN^r$ with $b_j,c_j>0$ for all $j$ then $Q_{i+1}(\bb+\cc)\subseteq Q_i(\bb)$ for all $i>0$.
\end{lemma}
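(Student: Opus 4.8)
The plan is to unwind the definitions and reduce everything to the combinatorial description of the $L$-regions given in Remark~\ref{rem:describeL}. Recall that $Q_{i+1}(\bb+\cc) = L_i(\bb+\cc-\one)$ and $Q_i(\bb) = L_{i-1}(\bb-\one)$ for $i>0$, so it suffices to prove the set inclusion $L_i(\bb+\cc-\one) \subseteq L_{i-1}(\bb-\one)$. By Remark~\ref{rem:describeL}, $L_i(\dd)$ is exactly the set of $\vv\in\ZZ^r$ such that the sum of the positive components of $\dd-\vv$ is at most $i$. So I would fix $\vv\in L_i(\bb+\cc-\one)$, i.e.\ assume that the sum of the positive components of $(\bb+\cc-\one)-\vv$ is at most $i$, and show that the sum of the positive components of $(\bb-\one)-\vv$ is at most $i-1$.

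The key point is the hypothesis that every coordinate of $\cc$ is $\geq 1$, so that $\cc\geq\one$ and hence $(\bb+\cc-\one)-\vv \geq (\bb-\one)-\vv$ coordinatewise, with the inequality strict in every coordinate where $c_j>1$ but at least $\geq$ in all coordinates. First I would observe that passing from $(\bb-\one)-\vv$ to $(\bb+\cc-\one)-\vv = ((\bb-\one)-\vv) + \cc$ only increases each coordinate, hence only increases (weakly) the sum of positive components; in fact I need a strict decrease of at least $1$ going the other way. For this, note that since $\bb\geq\one$ (as $b_j>0$), at least one coordinate of $\bb-\one$ is $\geq 0$... more carefully: let $\vv\in L_i(\bb+\cc-\one)$. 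If the sum of the positive components of $(\bb-\one)-\vv$ is already $\leq i-1$ we are done, so suppose it equals some $s\geq i$. Since $\cc\geq\one$, for each coordinate $j$ where $(\bb-\one)-\vv$ is positive, the corresponding coordinate of $(\bb+\cc-\one)-\vv$ is positive and is at least $1$ larger; summing over at least one such coordinate (there is one, since $s\geq i\geq 1$) shows the sum of positive components of $(\bb+\cc-\one)-\vv$ is at least $s+1 > i$, contradicting $\vv\in L_i(\bb+\cc-\one)$. Hence $s\leq i-1$, which is exactly $\vv\in L_{i-1}(\bb-\one) = Q_i(\bb)$.

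The main obstacle—though it is a mild one—is making sure the bookkeeping with ``sum of positive components'' is airtight: one must be careful that adding a nonnegative vector $\cc$ can turn previously nonpositive coordinates positive, so the set of coordinates contributing to the positive sum can grow, not just the values on a fixed set. But this only helps the inequality go in the needed direction, so the argument above is robust: the positive sum is a monotone function of the vector under coordinatewise $\leq$, and restricting attention to the coordinates where $(\bb-\one)-\vv$ is already positive gives the strict gain of $\geq 1$ from $\cc\geq\one$. I would also note explicitly where the hypothesis $b_j>0$ is used (only to guarantee $\bb\in\NN^r$ with $\bb\geq\one$, which is what makes $\bb-\one\in\NN^r$ and keeps us inside the regime where the $Q$/$L$ translation behaves as stated) and where $c_j>0$ is used (the essential strict-decrease step); in fact the argument shows the slightly stronger statement that $c_j\geq 1$ for all $j$ suffices, with no positivity needed on $\bb$ beyond $\bb\geq\one$.
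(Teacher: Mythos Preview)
Your argument is correct. Once the hedging is stripped away, the proof goes through cleanly: translate to $L$-regions via $Q_{i+1}(\bb+\cc)=L_i(\bb+\cc-\one)$ and $Q_i(\bb)=L_{i-1}(\bb-\one)$, use the description in Remark~\ref{rem:describeL}, and observe that if the positive-part sum of $(\bb-\one)-\vv$ is $s\geq i\geq 1$ then restricting to the (nonempty) set of coordinates where $(\bb-\one)-\vv$ is positive and adding $\cc\geq\one$ forces the positive-part sum of $(\bb+\cc-\one)-\vv$ to be at least $s+1>i$, a contradiction.

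The paper takes a different but equally short route: rather than arguing element-by-element via the positive-components characterization, it checks the semigroup generators of $Q_{i+1}(\bb+\cc)$ directly. Each generator has the form $\bb+\cc-\one-\vv$ with $\vv\in\NN^r$ and $|\vv|=i$; since $i\geq1$ some coordinate $v_j>0$, and one rewrites
\[
\bb+\cc-\one-\vv = \big(\bb-\one-(\vv-\ee_j)\big) + (\cc-\ee_j),
\]
exhibiting the point as a generator of $Q_i(\bb)$ plus a vector in $\NN^r$ (here is where $c_j\geq1$ is used). So the paper's argument is constructive on generators, while yours is a pointwise contradiction via the alternate description of $L_i$; both are elementary and of comparable length. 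Neither proof actually uses the hypothesis $b_j>0$---only $\cc\geq\one$ is essential---so your closing observation is correct, and your parenthetical about needing $\bb\geq\one$ for the $Q$/$L$ translation can be dropped.
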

\begin{proof}
  By definition the minimal elements of $Q_{i+1}(\bb+\cc)$ are of the form $\bb+\cc-\one-\vv$ where $\vv\in\NN^r$ and $|\vv|=i$.  It is enough to show that each $\bb+\cc-\one-\vv$ is in $Q_i(\bb)$. Since $|\vv|=i$ it has at least one nonzero coordinate, say $v_j$.  From this we have
  \[ \bb+\cc-\one-\vv=\left(\bb-\one-\left(\vv-\ee_j\right) \right)+(\cc-\ee_j). \]
  The desired containment follows from the above equality given that $|\vv-\ee_j|=i-1$ and that by assumption $\cc-\ee_j$ is in $\NN^r$\!.
\end{proof}

\begin{theorem}\label{thm:ci-regularity}
	Let $I = \langle f_1,\ldots,f_c\rangle\subset B$ be a saturated complete intersection of codimension $c$ in $S$, meaning that the $f_i$ form a regular sequence of elements from $B$ and $H_B^0(S/I) = 0$. Then
	\[ \reg(S/I) = Q_c\left(\sum_{i=1}^c\deg f_i\right). \]
\end{theorem}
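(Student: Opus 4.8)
The plan is to prove the two containments separately. For the containment $Q_c\left(\sum_{i=1}^c\deg f_i\right) \subseteq \reg(S/I)$, I would compute the multigraded Betti numbers of $S/I$: since $f_1,\ldots,f_c$ is a regular sequence, the Koszul complex $\bigwedge^\bullet\!\big(\bigoplus_{i=1}^c S(-\deg f_i)\big)$ is the minimal free resolution of $S/I$, so $\beta_j(S/I) = \big\{\sum_{i\in T}\deg f_i \;\big|\; T\subseteq\{1,\ldots,c\},\; |T| = j\big\}$. Write $\dd_T = \sum_{i\in T}\deg f_i$. Applying Corollary~\ref{cor:betti-reg-bound} (valid since $H_B^0(S/I)=0$ by hypothesis), it suffices to show $Q_c\big(\sum_{i=1}^c\deg f_i\big)\subseteq Q_{|T|}(\dd_T)$ for every $T$. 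Since $I\subseteq B$ each $f_i$ has a degree with all coordinates positive (a form in $B$ has degree $\geq\one$), so each $\deg f_i\in\NN^r$ is strictly positive in every coordinate. Then, writing $\sum_{i=1}^c\deg f_i = \dd_T + \sum_{i\notin T}\deg f_i$ and peeling off the $c-|T|$ excess generators one at a time, iterated application of Lemma~\ref{lem:Q-inclusions} gives $Q_c\big(\sum\deg f_i\big)\subseteq Q_{c-1}(\cdots)\subseteq\cdots\subseteq Q_{|T|}(\dd_T)$, using at the final step the case $i = |T|$; the case $|T| = 0$ is handled by noting $Q_c(\dd)\subseteq\dd+\NN^r = Q_0(\zero)$ translated appropriately. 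This establishes the inner bound.

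For the reverse containment $\reg(S/I)\subseteq Q_c\big(\sum_{i=1}^c\deg f_i\big)$, I would argue by contradiction using Theorem~\ref{thm:quasilinear-conjecture}: if $\dd\notin Q_c\big(\sum\deg f_i\big) = L_{c-1}\big(\sum\deg f_i - \one\big)$, I must show $S/I$ is not $\dd$-regular, equivalently (by Theorem~\ref{thm:quasilinear-conjecture}, since $H_B^0(S/I)=0$) that $(S/I)_{\geq\dd}$ does not have a quasilinear resolution generated in degree $\dd$. The idea is that the last term of the Koszul resolution, $S\big(-\sum_{i=1}^c\deg f_i\big)$ sitting in homological degree $c$, must persist into the resolution of the truncation in a controlled way. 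Concretely, I would show that $\Tor_c^S\big((S/I)_{\geq\dd},\kk\big)$ is nonzero in some degree $\bb'$ with $\bb'$ not lying in $Q_c(-\dd)$ shifted appropriately — i.e. a genuinely nonquasilinear syzygy appears. The natural mechanism: by Lemma~\ref{lem:truncation-betti} (or rather its converse direction, which needs care) the top Koszul syzygy at degree $\sum\deg f_i$ contributes to $\Tor_c$ of the truncation at degree $\cc = \max\{\sum\deg f_i,\dd\}$, possibly with some linear padding, but since $\dd\notin L_{c-1}(\sum\deg f_i-\one)$ the sum of positive coordinates of $\big(\sum\deg f_i - \one\big) - \dd$ exceeds $c-1$, which forces $|\cc - \one - \dd|$ to be too large for quasilinearity in homological degree $c$. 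The regularity of the top Koszul class (it is the socle-type generator and does not cancel) ensures this Tor class does not vanish; here I would use that $S/I$ is Cohen--Macaulay of codimension $c$ so nothing kills the length-$c$ end of the resolution, together with an analysis of the iterated mapping cone \eqref{eq:nonmin-res} from the proof of Lemma~\ref{lem:truncation-betti} to check no cancellation occurs at the relevant spot.

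The main obstacle I anticipate is the reverse containment, specifically showing the top syzygy genuinely survives to the minimal free resolution of the truncation without cancellation. Lemma~\ref{lem:truncation-betti} only gives an upper bound on where Betti numbers of $M_{\geq\dd}$ can appear — it does not guarantee a particular one is nonzero — so I cannot simply invoke it. Instead I expect to need either a direct computation with the \v Cech--Koszul machinery (the equality \eqref{eq:magic-equality-2} from Theorem~\ref{thm:reg-to-qlin}, reading off $\dim_\kk\Tor_c\big((S/I)_{\geq\dd},\kk\big)_\aa$ from $h^{|\aa|-c}\big(\PP\nn,\widetilde{S/I}\otimes\Om_{\PP\nn}^\aa(\aa)\big)$ and showing the relevant cohomology group is nonzero using the Künneth formula and the Koszul resolution of $\widetilde{S/I}$), or a careful analysis showing the length-$c$ end of the mapping-cone resolution \eqref{eq:nonmin-res} is already minimal in the offending degree because the contributing $G^i_j$ with $i + j = c$ are forced by degree reasons to be unique. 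I would aim to reduce to $\dd = \zero$ via Lemma~\ref{lem:truncation-regularity} throughout to keep the bookkeeping manageable, and treat the acknowledged improvement of Monica Lewis as pinning down exactly which minor hypothesis on the $f_i$ (beyond $I\subseteq B$ and saturation) is actually needed for the nonvanishing.
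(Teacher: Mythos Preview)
Your plan is correct and matches the paper's proof: the inner bound via the Koszul resolution, Lemma~\ref{lem:Q-inclusions}, and Corollary~\ref{cor:betti-reg-bound} is exactly what the paper does. For the reverse containment the paper takes your option (b), and the non-cancellation argument is short: the generator of $(F_c)_{\geq\dd}$ in degree $\aa'=\max\{\dd,\sum\deg f_i\}$ survives to the minimal resolution because a degree-zero map from it to any summand of $G_i^{c-1-i}$ would require $\max\{\dd,\bb\}+\vv=\aa'$ with $|\vv|=i\leq c-1$ and $\bb$ a partial sum of the $\deg f_j$; since each $\deg f_j\geq\one$ coordinate-wise one has $b_k+v_k<a_k$, forcing $\dd\geq\bb$ and hence $\dd+\vv=\aa'$, which contradicts $\dd\notin Q_c(\aa')$.
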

\begin{proof}
	Write $\aa = \sum_{i=1}^c\deg f_i$.  By Theorem~\ref{thm:quasilinear-conjecture} it suffices to show that $(S/I)_{\geq\dd}$ has a quasilinear resolution generated in degree $\dd$ if and only if $\dd\in Q_c(\aa)$.  We will prove one direction by showing that $Q_c(\aa)$ is the bound from Corollary~\ref{cor:betti-reg-bound}, i.e., that
	\[ \bigcap_{j\in\NN} \bigcap_{\bb\in\beta_j(S/I)} Q_j(\bb)=Q_c(\aa) \]
	By hypothesis the minimal free resolution $F_\b$ of $S/I$ is a Koszul complex, so the elements of $\beta_j(S/I)$ are sums of $j$ choices of $\deg f_i$.  In particular $\beta_0(S/I)=\{\zero\}$ and $\beta_c(S/I)=\{\aa\}$.  We have $Q_c(\aa)\subset\NN^r= Q_0(\zero)$, so it suffices to show that
  \[ Q_{j+1}(\deg f_{i_1}+\cdots+\deg f_{i_j}+\deg f_{i_{j+1}})\subseteq Q_j(\deg f_{i_1}+\cdots+\deg f_{i_j}) \]
	for all $0<j<c$ and all $1\leq i_1<\cdots<i_{j+1}\leq c$, since each of the other sets in the intersection can be obtained from $Q_c(\aa)$ in this way.  Note that since $I\subset B$, all coordinates of each $\deg f_i$ are positive; therefore the inclusion follows from Lemma~\ref{lem:Q-inclusions}.

	Now we need that $(S/I)_{\geq\dd}$ does not have a quasilinear resolution if $\dd\notin Q_c(\aa)$.  Specifically, we will show that the resolution of $(S/I)_{\geq\dd}$ has a $c$-th syzygy in degree $\aa'=\max\{\dd,\aa\}$.  If $\dd\notin Q_c(\aa)$ then $\dd\notin Q_c(\aa')$ and thus $-\aa'\notin Q_c(-\dd)$, so this will complete our argument.

	The proof of Lemma~\ref{lem:truncation-betti} constructs a possibly nonminimal free resolution \eqref{eq:nonmin-res} of $(S/I)_{\geq\dd}$ from resolutions of truncations of the $F_j$.  Since $(F_c)_{\geq\dd}$ has a generator of degree $\aa'$, the minimal free resolution of $(S/I)_{\geq\dd}$ will contain a $c$-th syzygy of degree $\aa'$ unless there is a nonminimal map from the generators $G_0^c$ of $(F_c)_{\geq\dd}$ to $G_0^{c-1}\oplus\cdots\oplus G_{c-1}^0$.  Suppose for contradiction that this is true.

	The degrees of the summands in $G_i^{c-1-i}$ have the form $\max\{\dd,\bb\}+\vv$ where $\bb$ is the sum of the degrees of $c-1-i$ choices of the generators $f_j$ and some $\vv\in\NN^r$ with $|\vv|=i$.  In order to have a degree $\zero$ map we need $\max\{\dd,\bb\}+\vv=\aa'=\max\{\dd,\aa\}$ for some $\bb$ and $\vv$.  Since all coordinates of each $\deg f_j$ are positive $b_j+i+1\leq a_j$ for each $j$, so $b_j+v_j\neq a_j$.  Thus $\dd\geq\bb$, so $\dd+\vv=\aa'$, contradicting the fact that $\dd\notin Q_c(\aa')$.
\end{proof}


Note the assumption that $H_B^0(S/I)=0$ is automatically satisfied if $\codim(P)\neq\codim(I)$ for all minimal primes $P$ over $B$. However, based on a number of examples it seems that a weaker saturation hypothesis may be sufficient.

\begin{example}
  Write $S = \kk[x_0,x_1,x_2,y_0,y_1,y_2]$ and consider the saturated complete intersection ideal $I = \langle x_0 y_0, x_1 y_1^2 \rangle$ that defines a surface in $\PP2\times\PP2$\!. Then Theorem~\ref{thm:ci-regularity} implies
  \[ \reg(S/I) = Q_2\big((2,3)\big) = \left((0,2)+\NN^2\right) \cup \left((1,1)+\NN^2\right). \]
\end{example}

An interesting application of Theorem~\ref{thm:ci-regularity} concerns complete intersection Calabi--Yau subvarieties of products of projective spaces, which by definition all satisfy the equality $\sum_{i=1}^c\deg f_i = \nn + \one = (n_1+1,\dots,n_r+1)$ \cite[\S2]{GHL13}.

\begin{corollary}\label{cor:ci-cy}
  Let $X\subset\PP\nn$ be a complete intersection Calabi--Yau variety of codimension $c$.\linebreak
  If the saturated defining ideal of $X$ is contained in $B$, then
  \[ \reg(S/I) = Q_c\left(\nn + \one\right). \]
\end{corollary}


\section{Linear Truncations}\label{sec:linear-trun}

As demonstrated by Example~\ref{ex:not-linear}, in general $\dd$-regularity is a weaker condition than having a linear resolution for $M_{\geq\dd}$. Still, linear truncations have been independently studied in the literature \cite{EES15,BES20}.

Our main result in this section is a cohomological vanishing condition that specifies when $M_{\geq\dd}$ has a linear free resolution. Our arguments largely mimic those for the analogous statements about quasilinear free resolutions by switching the roles of $L$ and $Q$. Indeed, if one cares specifically about linear syzygies then this vanishing condition could just as easily be taken as the definition of regularity on a product of projective spaces.

\begin{lemma}\label{lem:BES-sreg-to-lin}
	Let $M$ be a $\ZZ^r$-graded $S$-module.  If $\HH^i(\PP\nn\!, \shM(\bb))=0$ for all $i>0$ and all $\bb\in Q_i(\zero)$, then $\HH^{|\aa|-i}(\PP\nn\!, \shM\otimes\Om_{\PP\nn}^\aa(\aa))=0$ for all $i\geq 0$ and all $-\aa\notin L_i(\zero)$.
\end{lemma}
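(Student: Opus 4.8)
The plan is to imitate the proof of Lemma~\ref{lem:BES-reg-to-qlin}, but with the roles of the $L$ and $Q$ regions interchanged. Fix $i \geq 0$ and $\aa \in \ZZ^r$ with $-\aa \notin L_i(\zero)$, and suppose for contradiction that $\HH^{|\aa|-i}(\PP\nn, \shM \otimes \Om_{\PP\nn}^\aa(\aa)) \neq 0$; I will derive a nonzero cohomology group $\HH^{|\aa|-i+j}(\PP\nn, \shM(\bb))$ with $\bb \in Q_{|\aa|-i+j}(\zero)$, contradicting the hypothesis. As in the earlier lemma, one may assume $\zero \leq \aa \leq \nn$ since otherwise $\Om_{\PP\nn}^\aa(\aa) = 0$; let $\ell$ be the number of nonzero coordinates of $\aa$. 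Resolve $\Om_{\PP\nn}^\aa(\aa)$ by a tensor product $\mathcal F_\b$ of the Koszul-type resolutions of the factors (Section~\ref{sec:Koszul-cotangent}): using $r - \ell$ copies of $\o_{\PP\nn}$ and $\ell$ linear resolutions generated in total degree $1$, the twists in $\mathcal F_j$ have nonpositive coordinates and total degree $-j-\ell$. Since $\mathcal F$ is locally free, $\shM \otimes \mathcal F$ has cokernel $\shM \otimes \Om_{\PP\nn}^\aa(\aa)$, and a standard hypercohomology spectral sequence argument (as in the proof of Theorem~\ref{thm:qlin-to-reg}) gives some $j \geq 0$ with $\HH^{|\aa|-i+j}(\PP\nn, \shM \otimes \mathcal F_j) \neq 0$.

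Now the combinatorial heart of the argument: I need to check that every twist $-\bb$ appearing in $\mathcal F_j$ satisfies $\bb \in Q_{|\aa|-i+j}(\zero)$, so that the nonvanishing above contradicts the hypothesis. Recall $Q_k(\zero) = L_{k-1}(-\one)$ for $k > 0$, and $Q_0(\zero) = \NN^r$. A twist $-\bb$ in $\mathcal F_j$ has the $\ell$ ``active'' coordinates summing to some value in $\{j, j+1, \dots\}$ — more precisely, $\bb$ is supported on the $\ell$ nonzero coordinates of $\aa$ and $|\bb| \geq j + \ell$ with each such coordinate $\geq 1$ when it is hit. The key inequality comes from $-\aa \notin L_i(\zero)$: by Remark~\ref{rem:describeL} this says the sum of the positive components of $\aa$ — which is exactly $|\aa|$, since $\aa \geq \zero$ — exceeds $i$, i.e. $|\aa| > i$, hence $|\aa| - i \geq 1$ so $k \coloneqq |\aa| - i + j \geq 1$ and we are in the $Q_k(\zero) = L_{k-1}(-\one)$ regime. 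By Remark~\ref{rem:describeL}, $\bb \in L_{k-1}(-\one)$ iff the sum of the positive components of $-\one - \bb$ is at most $k-1$; since the twists in $\mathcal F_j$ are nonpositive, $-\one-\bb$ is $\geq \zero$ in every coordinate and its coordinate sum is $r + |\bb|_{\text{(as positive number)}}$... I would instead phrase this directly in terms of membership: I want $-\bb \in Q_k(\zero)$ viewed correctly, i.e. that $\bb$ lies in the appropriate shifted-staircase region. Concretely, I will show $\bb + \one \in L_{k-1}(\zero)$... the cleanest route is to carry out the same arithmetic as in Lemma~\ref{lem:BES-reg-to-qlin}: split into the cases $i = 0$ and $i > 0$, bound $|\aa| - i + j$ below by $j + \ell$ (using $|\aa| \geq \ell$ when $i = 0$, and $|\aa| > i$ hence $|\aa| \geq i+1 \geq i + (\text{count})$ when $i>0$), and conclude that the region containing the twists of $\mathcal F_j$ is contained in the relevant $Q$-region of index $|\aa|-i+j$.

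The main obstacle I anticipate is getting the index bookkeeping for the $Q$-regions exactly right: unlike the $L$-regions, the $Q_k(\zero)$ are shifted by $-\one$, so the twists of $\mathcal F_j$ (which naturally live in an $L$-region) must be compared against $L_{k-1}(-\one)$, and I must be careful that $\ell$ active coordinates of total degree at least $j + \ell$ really do land inside $L_{|\aa|-i+j-1}(-\one)$. I expect this reduces, via Remark~\ref{rem:describeL}, to the numerical inequality that the sum of the positive components of $-\one-\bb$ is at most $|\aa|-i+j-1$; since $\bb \leq \zero$ is supported on $\ell$ coordinates with $|\bb| = $ (some value $\leq$ total degree bound), this sum is $r + (\text{that value})$, and the inequality should follow from $|\aa| > i$ together with the degree bound $|\bb| \leq$ (total degree of $\mathcal F_j$) $= j + \ell$ on the relevant summand — but pinning down which summands of $\mathcal F_j$ can actually carry nonzero cohomology, and hence the sharp bound on $|\bb|$, is the delicate point. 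Once the region containment is established, the contradiction with the hypothesis $\HH^i(\PP\nn,\shM(\bb)) = 0$ for $\bb \in Q_i(\zero)$ is immediate, which completes the proof.
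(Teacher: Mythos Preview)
Your overall approach matches the paper's, and the setup through the spectral sequence step is correct. The gap is in the combinatorial finish, where your sign bookkeeping goes astray.

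Fix the paper's convention: write $\bb$ for the twist itself, so $\mathcal F_j$ is a sum of copies of $\o_{\PP\nn}(\bb)$ with $\bb\leq\zero$ and $|\bb|=-j-\ell$. Your claim that $-\one-\bb\geq\zero$ in every coordinate is false: the $r-\ell$ inactive coordinates of $\bb$ are zero, so those coordinates of $-\one-\bb$ equal $-1$. Likewise the assertion that the sum of the positive components of $-\one-\bb$ is ``$r+(\text{that value})$'' is wrong. The correct count is much simpler than you anticipate: only the $\ell$ active coordinates of $-\one-\bb$ can be nonnegative, and since the active coordinates of $\bb$ sum to $-(j+\ell)$, those of $-\one-\bb$ sum to $-\ell+(j+\ell)=j$. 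Hence by Remark~\ref{rem:describeL} one has $\bb\in L_j(-\one)=Q_{j+1}(\zero)$. Since $|\aa|>i$ gives $|\aa|-i+j\geq j+1$, it follows that $\bb\in Q_{|\aa|-i+j}(\zero)$ directly. No case split on $i=0$ versus $i>0$ is needed; the argument is in fact shorter and more uniform than its counterpart in Lemma~\ref{lem:BES-reg-to-qlin}.
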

\begin{proof}
	We will modify the argument from Lemma~\ref{lem:BES-reg-to-qlin}.

	Suppose that $-\aa\notin L_i(\zero)$ and $\HH^{|\aa|-i}(\PP\nn\!, \shM\otimes\Om_{\PP\nn}^\aa(\aa))\neq 0$.  Since $\aa\geq 0$ we have $|\aa|>i$.  There must exist $j$ such that $\HH^{|\aa|-i+j}(\PP\nn\!, \shM\otimes\cF_j)\neq 0$, where the twists $\bb$ in $F_j$ have total degree $-j-\ell$ for $\ell$ the number of nonzero coordinates in $\aa$.  Each twist has $\ell$ negative coordinates, so that the positive coordinates of $-\one-\bb$ sum to $j+\ell-\ell=j$.  Hence $\HH^{|\aa|-i+j}(\PP\nn\!, \shM(\bb))\neq 0$ for some $\bb\in L_j(-\one)= Q_{j+1}(\zero)\subseteq Q_{|\aa|-i+j}(\zero)$ with $|\aa|-i+j>0$.
\end{proof}

As in our main theorem, the conclusion of this lemma ensures the vanishing of certain Betti numbers of $M_{\geq\dd}$.

\begin{theorem}\label{thm:sreg-iff-lin}
  Let $M$ be a finitely generated $\ZZ^r$-graded $S$-module with $\HH_B^0(M) = 0$.  Then $M_{\geq\dd}$ has a linear resolution $F_{\b}$ with $F_{0}$ generated in degree $\dd$ if and only if $\HH_B^i(M)_\bb=0$ for all $i>0$ and all $\bb\in Q_{i-1}(\dd)$.
\end{theorem}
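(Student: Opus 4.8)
The plan is to prove both implications of Theorem~\ref{thm:sreg-iff-lin} in parallel with the proofs of Theorems~\ref{thm:reg-to-qlin} and~\ref{thm:qlin-to-reg}, exchanging the roles of the regions $L_i$ and $Q_i$. As in those proofs I would first reduce to the case $\dd=\zero$ by replacing $M$ with the shift $M(\dd)$, which turns $Q_{i-1}(\dd)$ into $Q_{i-1}(\zero)$ and ``generated in degree $\dd$'' into ``generated in degree $\zero$''. Then I would reduce further to $M=M_{\geq\zero}$: the modules $\HH_B^i(M)$ and $\HH_B^i(M_{\geq\zero})$ agree for $i\geq 2$ and agree in nonnegative degrees for $i=1$, while $Q_0(\zero)=\NN^r$ and $Q_{i-1}(\zero)$ contains no relevant extra degrees for $i=1$; since $\HH_B^0(M)=0$ forces $\HH_B^0(M_{\geq\zero})=0$ and $(M_{\geq\zero})_{\geq\zero}=M_{\geq\zero}$, neither the hypothesis nor the conclusion changes.

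For the implication ``cohomology vanishing $\Rightarrow$ linear truncation'', I would observe that since $L_{i-1}(\zero)\subseteq Q_{i-1}(\zero)$, the hypothesis together with $\HH_B^0(M)=0$ says exactly that $M$ is $\zero$-regular (Definition~\ref{def:regular}); hence Theorem~\ref{thm:reg-to-qlin} applies, and in particular its equality~\eqref{eq:magic-equality-2} holds, $\dim_\kk\Tor_j(M,\kk)_\aa = \dim_\kk\HH^{|\aa|-j}(\PP\nn,\shM\otimes\Om_{\PP\nn}^\aa(\aa))$ for $|\aa|\geq j$. Theorem~\ref{thm:reg-to-qlin} also tells us $M$ has a quasilinear resolution generated in degree $\zero$; in particular $M$ is generated in degree $\zero$, so (since the differentials of a minimal resolution over the $\NN^r$-graded ring $S$ strictly raise total degree) every twist $-\bb$ in the minimal resolution of $M$ has $\bb\geq\zero$, and for such $\bb$ the failure of $-\bb\in L_j(\zero)$ forces $|\bb|>j$, exactly the range where~\eqref{eq:magic-equality-2} is available. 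On the cohomology side, the hypothesis is precisely the hypothesis of Lemma~\ref{lem:BES-sreg-to-lin} via $\HH^i(\PP\nn,\shM(\bb))\cong\HH_B^{i+1}(M)_\bb$ for $i>0$, so that lemma gives $\HH^{|\aa|-j}(\PP\nn,\shM\otimes\Om_{\PP\nn}^\aa(\aa))=0$ whenever $-\aa\notin L_j(\zero)$. Combining, $\Tor_j(M,\kk)_\bb=0$ whenever $-\bb\notin L_j(\zero)$, i.e.\ the minimal free resolution of $M=M_{\geq\zero}$ is linear and generated in degree $\zero$.

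For the implication ``linear truncation $\Rightarrow$ cohomology vanishing'', I would run the hypercohomology spectral sequence from the proof of Theorem~\ref{thm:qlin-to-reg}. Given a linear resolution $F_\b$ of $M=M_{\geq\zero}$ generated in degree $\zero$, the double complex $E^{s,t}=\cech^t(B,F_{-s})$ has a spectral sequence converging to $\HH_B^{\bullet}(M)$, and the diagonal contributing to $\HH_B^i(M)$ in degree $\aa$ consists of the groups $\HH_B^{i+p}(F_p)_\aa=\bigoplus \HH_B^{i+p}(S)_{\aa+\cc}$, where $\cc=-\bb$ ranges over the twists of $F_p$, so $\cc\in L_p(\zero)$. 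For $i\geq 1$ and $\aa\in Q_{i-1}(\zero)$, Lemma~\ref{lem:structure-sheaf} applied with its indices $i\mapsto p$ and $j\mapsto i-1$ (using $L_p(\zero)+Q_{i-1}(\zero)=L_{p+i-1}(-\one)$) yields $\HH_B^{p+i}(S)_{\aa+\cc}=0$. Hence every term on the diagonal vanishes in degree $\aa$, so $\HH_B^i(M)_\aa=0$ for all $i\geq 1$ and all $\aa\in Q_{i-1}(\zero)$, as required.

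The main obstacle I anticipate is purely organizational: carefully matching the index conventions of Lemmas~\ref{lem:BES-sreg-to-lin} and~\ref{lem:structure-sheaf} to the statement, and justifying that~\eqref{eq:magic-equality-2} is available in exactly the degrees where linearity could fail (the point $|\bb|>j$). The conceptual content is entirely parallel to the quasilinear case, obtained by interchanging $L$ and $Q$, so I expect no genuinely new difficulty beyond this bookkeeping.
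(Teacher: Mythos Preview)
Your proposal is correct and follows essentially the same approach as the paper: the forward implication runs the hypercohomology spectral sequence of Theorem~\ref{thm:qlin-to-reg} with the roles of $L$ and $Q$ swapped (via Lemma~\ref{lem:structure-sheaf}), and the reverse implication first observes that the hypothesis implies $\dd$-regularity, then invokes the Betti-number equality \eqref{eq:magic-equality-2} from the proof of Theorem~\ref{thm:reg-to-qlin} together with Lemma~\ref{lem:BES-sreg-to-lin}. Your extra bookkeeping---the explicit reduction to $\dd=\zero$ and $M=M_{\geq\zero}$, and the check that \eqref{eq:magic-equality-2} applies precisely in the range $|\bb|>j$ where linearity could fail---is sound and simply spells out what the paper leaves implicit.
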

\begin{proof}
	As in the proof of Theorem~\ref{thm:qlin-to-reg}, we may assume $\dd=\zero$.  The proof of the forward implication is analogous to the argument there, switching the roles of $L$ and $Q$.  For the reverse, notice that the proof of Theorem~\ref{thm:reg-to-qlin} shows that the virtual resolution of $M$ from Proposition~\ref{prop:free-monad} has the same Betti numbers as the minimal free resolution of $M_{\geq\dd}$, i.e.,
  \[ \dim_\kk\Tor_j^S(M_{\geq\dd}, \kk)_\aa = \dim_\kk\HH^{|\aa|-j}(\PP\nn\!, \shM\otimes\Om_{\PP\nn}^\aa(\aa)) \]
  for $|\aa|\geq j\geq 0$ and both are 0 otherwise. The vanishing of the right hand side for $-\aa\notin L_j(\zero)$, given by Lemma~\ref{lem:BES-sreg-to-lin}, then implies that the minimal free resolution of $M_{\geq\dd}$ is linear.
\end{proof}

\begin{corollary}\label{cor:S-reg}
	The minimal free resolution of $S(-\bb)_{\geq\dd}$ is linear for all $\bb,\dd\in\ZZ^r$\!.
\end{corollary}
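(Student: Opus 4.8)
The plan is to deduce this from Theorem~\ref{thm:sreg-iff-lin} after a short reduction to a truncation of $S$ itself. Since $S$ vanishes in every multidegree that is not $\geq\zero$, an element of $S(-\bb)$ of degree $\mathbf f$ can be nonzero only when $\mathbf f\geq\bb$, so truncating $S(-\bb)$ at $\dd$ agrees with truncating it at $\max\{\dd,\bb\}$, and reindexing gives an isomorphism of $\ZZ^r$-graded modules $S(-\bb)_{\geq\dd}\cong\bigl(S_{\geq\dd'}\bigr)(-\bb)$ with $\dd'\coloneqq\max\{\dd-\bb,\zero\}\geq\zero$. Twisting a free complex by a line bundle translates the generating degree of $F_0$ and every region $L_j(-\dd)$ by the same vector, hence preserves the property of having a linear minimal free resolution; so it suffices to show that $S_{\geq\dd'}$ has a linear resolution for every $\dd'\geq\zero$. (Note $S_{\geq\dd'}$ is then generated in degree $\dd'$, since $S$ is generated in degrees $\ee_1,\dots,\ee_r$.)

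I would then apply the ``if'' direction of Theorem~\ref{thm:sreg-iff-lin} with $M=S$ and truncation degree $\dd'$. The hypothesis $\HH_B^0(S)=0$ holds, so the theorem reduces the statement to the vanishing $\HH_B^i(S)_\aa=0$ for all $i>0$ and all $\aa\in Q_{i-1}(\dd')$. For $i=1$ this is immediate from $\HH_B^1(S)=0$.

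For $i\geq2$ we have $Q_{i-1}(\dd')=L_{i-2}(\dd'-\one)$, and since $\dd'\geq\zero$ gives $\dd'-\one\geq-\one$, the description of $L_{i-2}$ in Remark~\ref{rem:describeL} shows $L_{i-2}(\dd'-\one)\subseteq L_{i-2}(-\one)$. But the proof of Lemma~\ref{lem:structure-sheaf} establishes precisely that $\HH_B^{k+1}(S)_\cc=\HH^k\bigl(\PP\nn,\o_{\PP\nn}(\cc)\bigr)=0$ for all $k\geq1$ and all $\cc\in L_{k-1}(-\one)$; taking $k=i-1$ gives the required vanishing and completes the argument.

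The only point that needs care is the reduction in the first paragraph: applying Theorem~\ref{thm:sreg-iff-lin} directly at $\dd$ rather than at $\max\{\dd,\bb\}$ would demand that $F_0$ be generated in degree $\dd$, which fails as soon as $\dd\not\geq\bb$ — and correspondingly the cohomological vanishing genuinely fails in that case (already for $S(-1)$ on $\PP1$ with $\dd=0$). Once the truncation is taken at the correct degree, the remaining region arithmetic is routine, so I expect no real obstacle.
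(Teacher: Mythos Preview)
Your argument is correct and follows essentially the same route as the paper: reduce to a truncation of $S$ at some $\dd'\geq\zero$, invoke Theorem~\ref{thm:sreg-iff-lin}, and check the required local-cohomology vanishing via Lemma~\ref{lem:structure-sheaf}. The one substantive difference is that the paper reduces the vanishing check to $\dd'=\zero$ by citing Proposition~\ref{prop:invariance}, whereas you verify it directly at $\dd'$ using the containment $Q_{i-1}(\dd')\subseteq L_{i-2}(-\one)$. Your route has the advantage of being logically cleaner: Proposition~\ref{prop:invariance} is proved via Theorem~\ref{thm:betti-truncations} and Lemma~\ref{lem:truncation-betti}, the latter of which forward-references Corollary~\ref{cor:S-reg}, so invoking Proposition~\ref{prop:invariance} here risks circularity. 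Your direct containment argument, together with the explicit handling of the case $i=1$ via $\HH_B^1(S)=0$, sidesteps this entirely.
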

\begin{proof}
	By adjusting $\dd$ we may assume that $\bb=\zero$.  Note that $S_{\geq\dd}=S_{\geq\dd'}$ for $\dd'=\max\{\dd,\zero\}\in\zero+\NN^r$\!.  Thus by Theorem~\ref{thm:sreg-iff-lin} and Proposition~\ref{prop:invariance} it suffices to show that $H_B^i(S)_\bb=0$ for all $i>0$ and all $\bb\in Q_{i-1}(\zero)$, which follows from Lemma~\ref{lem:structure-sheaf}.
\end{proof}

\section{Generalizing Eisenbud--Goto}\label{sec:regularity-regions}

Recall Eisenbud--Goto's conditions \eqref{item:local-coh} through \eqref{item:single-betti} from the introduction. As we have seen, these conditions diverge substantially for products of projective spaces. However, they can each be generalized to give interesting, albeit different, regions inside $\Pic\PP\nn$\!.

If $M$ is a finitely generated $\ZZ^r$-graded $S$-module, then \eqref{item:local-coh} defines the multigraded regularity region $\reg(M)\subseteq\Pic\PP\nn$ of Maclagan and Smith.  On the other hand condition \eqref{item:single-lin} naturally generalizes to two truncation regions, depending on whether one wishes to preserve the degree of maps or the equivalence with \eqref{item:local-coh}. First, the obvious generalization gives the linear truncation region:
\[ \trunc^L(M) \coloneqq \left\{\dd\in\ZZ^r \;\;\middle|\;\; M_{\geq\dd}\text{ has a linear free resolution generated in degree $\dd$}\right\}. \]
Second, our characterization of regularity gives the quasilinear truncation region:
\[ \trunc^Q(M)\coloneqq \left\{\dd\in\ZZ^r \;\;\middle|\;\; M_{\geq\dd}\text{ has a quasilinear free resolution generated in degree $\dd$}\right\}. \]
Finally, condition \eqref{item:single-betti} on the Betti numbers of $M$ also naturally generalizes to two Betti regions, the $L$-Betti region as in Theorem~\ref{thm:betti-truncations} and the $Q$-Betti region as in Theorem~\ref{thm:betti-regularity}:
\begin{align*}
  \betti^L(M) \coloneqq \bigcap_{i\in\NN}\bigcap_{\dd\in \beta_i(M)} L_i(\dd), &&&&
  \betti^Q(M) \coloneqq \bigcap_{i\in\NN}\bigcap_{\dd\in \beta_i(M)} Q_i(\dd).
\end{align*}

Theorem~\ref{thm:quasilinear-conjecture} now states that $\reg(M) = \trunc^Q(M)$ when $\HH_B^0(M)=0$. Moreover, since all linear resolutions are quasilinear we get $\trunc^L(M)\subseteq\trunc^Q(M)$. Similarly, since $L_i(\dd)\subseteq Q_i(\dd)$, by definition $\betti^L(M)\subseteq\betti^Q(M)$.

Theorem~\ref{thm:betti-truncations} shows that the $L$-Betti region $\betti^L(M)$ is a subset of the linear truncation region $\trunc^L(M)$. Similarly, Theorem~\ref{thm:betti-regularity} shows that the $Q$-Betti region $\betti^Q(M)$ is a subset of the quasilinear truncation region $\trunc^Q(M)$. We can summarize all of the above relations in the following highly non-commutative diagram:

\[\begin{tikzcd}[row sep = 2em, column sep = 3em]
  \betti^L(M) \rar[hookrightarrow]{\ref{thm:betti-truncations}} \dar[hookrightarrow] &
  \trunc^L(M) \dar[hookrightarrow]{} & \\
  \betti^Q(M) \rar[hookrightarrow]{\ref{thm:betti-regularity}} &
  \trunc^Q(M) \rar[equals]{\ref{thm:quasilinear-conjecture}} & \reg(M)
\end{tikzcd}\]

We saw in Section~\ref{sec:linear-trun} that we can switch the roles of $Q$ and $L$ in the proof of Theorem~\ref{thm:quasilinear-conjecture} to complete the upper right corner of this diagram. The resulting cohomological characterization of $\trunc^L(M)$ in Theorem~\ref{thm:sreg-iff-lin} is related to the positivity conditions described in Remark~\ref{rem:BES-1.2}. We suspect that the reversal of $Q$ and $L$ between the Betti number and cohomological conditions has a deeper explanation in terms of the BGG correspondence.

We illustrate the four regions above in the following example.

\begin{example}
  Let $I$ be the $B$-saturated ideal in Example~\ref{ex:hyperelliptic-curve}, defining a smooth hyperelliptic curve of genus 4 embedded into $\PP1\times\PP2$ as a curve of degree $(2,8)$. As noted in \cite[Ex.~1.4]{BES20}, using \textit{Macaulay2} one finds that the minimal graded free resolution of $I$ is:
  \[\begin{tikzcd}
  S & \lar
  \begin{matrix}
    S(-3,-1)\\[-3pt]
    \oplus\\[-3pt]
    S(-2,-2)\\[-3pt]
    \oplus\\[-3pt]
    S(-2,-3)^2\\[-3pt]
    \oplus\\[-3pt]
    S(-1,-5)^3\\[-3pt]
    \oplus\\[-3pt]
    S(0,-8)
  \end{matrix}
  & \lar
  \begin{matrix}
    S(-3,-3)^3\\[-3pt]
    \oplus\\[-3pt]
    S(-2,-5)^6\\[-3pt]
    \oplus\\[-3pt]
    S(-1,-7)\\[-3pt]
    \oplus\\[-3pt]
    S(-1,-8)^2
  \end{matrix}
  & \lar
  \begin{matrix}
    S(-3,-5)^3\\[-3pt]
    \oplus \\[-3pt]
    S(-2,-7)^2\\[-3pt]
    \oplus \\[-3pt]
    S(-2,-8)
  \end{matrix}
  & \lar S(-3,-7)
  & \lar 0.
  \end{tikzcd}\]
  From this we can calculate that $\betti^L(S/I)$ and $\betti^Q(S/I)$ are both equal to $(2,7)+\NN^2$\!.  These regions, depicted in Figure~\ref{fig:four-regions-landscaping}, can also be computed using \texttt{linearTruncationsBound} and \texttt{regularityBound} from the \textit{Macaulay2} package \texttt{LinearTruncations}, which implement Theorems~\ref{thm:betti-truncations} and \ref{thm:betti-regularity}, respectively \cite{CHN22}.

  Further, using the functions \texttt{linearTruncations} and \texttt{multigradedRegularity} from the package \texttt{VirtualResolutions} \cite{ABLS20}, we can compute where $S/I$ has a linear or quasilinear truncation inside the box $[0,9]^2$. We see that the minimal elements of $\trunc^L(S/I)$ are $(1,5), (2,2)$, and $(5,1)$. On the other hand the minimal elements of $\trunc^Q(S/I)$---which equals $\reg(S/I)$ as $I$ is saturated---are $(1,5), (2,2),$ and $(4,1)$.
\end{example}

\begin{figure}[h]
  \newcommand{\makegrid}{
  \path[use as bounding box] (-1.5,-2) rectangle (9.5,9.2);
  \foreach \x in {-1,...,9}
  \foreach \y in {-1,...,9}
    { \fill[gray,fill=gray] (\x,\y) circle (1.5pt); }
  \draw[-,  semithick] (-1,0)--(9,0);
  \draw[-,  semithick] (0,-1)--(0,9);
}

\begin{tikzpicture}[scale=.3]
  \path[fill=PineGreen!45] (1,5)--(1,9)--(9,9)--(9,1)--(5,1)--(5,1)--(5,2)--(2,2)--(2,5)--(1,5);
  \makegrid
  \draw[->, ultra thick,PineGreen] (5,1)--(9,1);
  \draw[-, cap=round,ultra thick,PineGreen] (5,1)--(5,2)--(2,2)--(2,5)--(1,5);
  \draw[->, ultra thick,PineGreen] (1,5)--(1,9);
  \fill[TealBlue,fill=PineGreen] (2,2) circle (6pt);
  \fill[TealBlue,fill=PineGreen] (5,1) circle (6pt);
  \fill[TealBlue,fill=PineGreen] (1,5) circle (6pt);
  \node at (4.5,-2.5) {{\footnotesize $\trunc^L(S/I)$}};
\end{tikzpicture}
\quad 
\begin{tikzpicture}[scale=.3]
  \path[fill=PineGreen!45] (9,9)--(2,9)--(2,7)--(9,7)--(9,9);
  \makegrid
  \draw[->, ultra thick,PineGreen] (2,7)--(2,9);
  \draw[->, ultra thick,PineGreen] (2,7)--(9,7);
  \fill[TealBlue,fill=PineGreen] (2,7) circle (6pt);
  \node at (4.5,-2.5) {{\footnotesize $\betti^L(S/I)$}};
\end{tikzpicture}
\quad 
\begin{tikzpicture}[scale=.3]
  \path[fill=DarkOrchid!45] (9,9)--(2,9)--(2,7)--(9,7)--(9,9);
  \makegrid
  \draw[->, ultra thick,DarkOrchid] (2,7)--(2,9);
  \draw[->, ultra thick,DarkOrchid] (2,7)--(9,7);
  \fill[TealBlue,fill=DarkOrchid] (2,7) circle (6pt);
  \node at (4.5,-2.5) {{\footnotesize $\betti^Q(S/I)$}};
\end{tikzpicture}
\quad 
\begin{tikzpicture}[scale=.3]
  \path[fill=DarkOrchid!45] (9,9)--(9,1)--(4,1)--(4,2)--(2,2)--(2,5)--(1,5)--(1,9)--(9,9);
  \makegrid
  \draw[->, ultra thick,DarkOrchid] (4,1)--(9,1);
  \draw[-, cap=round,ultra thick,DarkOrchid] (4,1)--(4,2)--(2,2)--(2,5)--(1,5);
  \draw[->, ultra thick,DarkOrchid] (1,5)--(1,9);
  \fill[TealBlue,fill=DarkOrchid] (2,2) circle (6pt);
  \fill[TealBlue,fill=DarkOrchid] (4,1) circle (6pt);
  \fill[TealBlue,fill=DarkOrchid] (1,5) circle (6pt);
  \node at (4.5,-2.5) {{\footnotesize $\trunc^Q(S/I) = \reg(S/I)$}};
\end{tikzpicture}
  \caption{The four regions for Example~\ref{ex:hyperelliptic-curve} inside $\Pic\left(\PP1\times\PP2\right)$.}\label{fig:four-regions-landscaping}
\end{figure}

\begin{remark}
  Beyond products of projective spaces, the cones of nef and effective divisors on an arbitrary smooth projective toric variety no longer coincide in general. While Maclagan and Smith's definition of multigraded Castelnuovo--Mumford regularity still applies, this divergence necessitates new definitions of linearity for graded complexes and truncation for graded modules.
\end{remark}

\vfill

\pagebreak[4]

\begin{appendix}

\section{Virtual Resolutions via Fourier--Mukai Transforms}\label{sec:appendix}

Fourier--Mukai transforms, the Beilinson spectral sequence, and tilting functors are tools from derived categories and representation theory which are used in the proofs of Proposition~\ref{prop:free-monad} and Theorem~\ref{thm:uniqueness}. Here we introduce these concepts and present the details necessary for constructing the free monad in Proposition~\ref{prop:free-monad}, which is homotopic to the output of a certain Fourier--Mukai transform, and proving its uniqueness.

\subsection{The Fourier--Mukai Transform}\label{sec:fourier-mukai}

In this section we introduce a type of geometric functor between derived categories known as a Fourier--Mukai transform. See \cite[\S5]{Huybrechts2006} for background and further details.

Let $X$ and $Y$ be smooth projective varieties and consider the two projections
\[\begin{tikzcd}[column sep=small, row sep=tiny]
& {X\times Y} \\ X && Y.
\arrow["p", from=1-2, to=2-3]
\arrow["q"', from=1-2, to=2-1]
\end{tikzcd}\]
Given an object $\cK\in\Db{X\times Y}$, the associated \emph{Fourier--Mukai transform} is a functor \[ \Phi_\cK\colon \Db X\to \Db Y \] between the derived categories of bounded complexes of coherent sheaves. It is constructed as a composition of derived functors
\[ \cF \mapsto \R p_*\!\left(\L q^*\cF \Lotimes \cK\right), \]
and in fact all equivalences between $\Db X$ and $\Db Y$ arise in this way.  Here $\L q^*$, $\R p_*$, and $-\Lotimes\cK$ are the derived functors induced by $q^*$, $p_*$, and $-\otimes\cK$, respectively. Moreover, since $q$ is flat $\L q^*$ is the usual pull-back, and if $\cK$ is a complex of locally free sheaves $-\Lotimes\cK$ is the usual tensor product.

A special case of the Fourier--Mukai transform occurs when $Y = X$ and $\cK\in\Db{X\times X}$ is a resolution of the structure sheaf $\OO_\Delta$ of the diagonal subscheme $\iota\colon\Delta\to X\times X$\!.  Such $\cK$ is referred to as a \textit{resolution of the diagonal}.

Using the projection formula, one can see that the Fourier--Mukai transform $\Phi_{\OO_\Delta}$ is simply the identity in the derived category; that is to say, replacing $\OO_\Delta$ with $\cK$ produces quasi-isomorphisms.


\subsection{The Beilinson Spectral Sequence}\label{sec:beilinson-ss}

Returning to the case of products of projective spaces, we consider coherent sheaves on $X=\PP\nn$\!.  We construct a free monad for $\shM$ from the Beilinson spectral sequence on $\PP\nn\times\PP\nn$ and describe its Betti numbers.  When $M$ is $\zero$-regular, this free monad is a virtual resolution, which is used in Section~\ref{sec:regularity-criterion}. See \cite[\S3.1]{OSS1980} for a geometric exposition and \cite[\S8.3]{Huybrechts2006} or \cite[\S3]{AO89} for an algebraic exposition on a single projective space.

For sheaves $\cF$ and $\cG$ on $\PP\nn$\!, denote $p^*\cF\otimes q^*\cG$ by $\cF\boxtimes\cG$.  Consider the vector bundle
\begin{align*}
  \cW &= \bigoplus_{i=1}^r \OO_{\PP\nn}(\ee_i) \boxtimes \cT_{\PP\nn}^{\ee_i}(-\ee_i),
\end{align*}
where $\cT_{\PP\nn}^{\ee_i}$ is the pullback of the tangent bundle $\cT_{\PP{n_i}}$, as in the Euler sequence on the single factor $\PP{n_i}$:
\vspace*{-0.5em}
\begin{equation*}\label{eq:euler-seq}
  \begin{tikzcd}
    0 & \cT_{\PP{n_i}} \lar & \OO_{\PP{n_i}}^{n_i+1}(1) \lar & \OO_{\PP{n_i}} \lar & 0 \lar.
  \end{tikzcd}
\end{equation*}

There is a canonical section $s\in\HH^0(\PP\nn\times\PP\nn\!,\cW)$ whose vanishing cuts out the diagonal subscheme $\Delta\subset\PP\nn\times\PP\nn$ (see \cite[Lem.~2.1]{BES20}), giving a Koszul resolution of $\OO_\Delta$:
\begin{equation*}\label{eq:res-o'diagonal}
  \begin{tikzcd}
\cK\colon 0 &\lar \OO_{\PP\nn \times \PP\nn} &\lar \cW^\vee &\lar \Alt^2\cW^\vee &\lar \cdots &\lar \Alt^n\cW^\vee &\lar 0.
  \end{tikzcd}
\end{equation*}

The terms of $\cK$ can be written as
\begin{align}\label{eq:res-o'diag-terms}
  \cK_j
  = \Alt^j \left(\bigoplus_{i=1}^r \OO_{\PP\nn}(-\ee_i)\boxtimes \Om_{\PP\nn}^{\ee_i}(\ee_i)\right)
  = \bigoplus_{|\aa|=j}            \OO_{\PP\nn}(-\aa)  \boxtimes \Om_{\PP\nn}^{\aa}(\aa), \quad \text{ for } 0\leq j\leq|\nn|.
\end{align}
As in Section~\ref{sec:fourier-mukai}, we are interested in the derived pushforward of $q^*\shM\otimes\cK$, which we will compute by resolving the second term of each box product with a \v{C}ech complex to obtain a spectral sequence.  Since $\cK$ is a resolution of the diagonal, the pushforward will be quasi-isomorphic to $\shM$\!.

Consider the double complex
\[ C^{-s,t} = \bigoplus_{|\aa|=s} \OO_{\PP\nn}(-\aa) \boxtimes \cech^t\Big(\mathfrak U_B, \shM\otimes\Om_{\PP\nn}^{\aa}(\aa)\Big), \]
with vertical maps from the \v{C}ech complexes and horizontal maps from $\cK$.  Since taking \v{C}ech complexes is functorial and exact we have \(\Tot(C) \sim q^*\shM\otimes\cK\), which is a resolution of \(q^*\shM\otimes\OO_\Delta \) because $\cK$ is locally free. Moreover, since the first term of each box product in $q^*\shM\otimes\cK$ is locally free, the columns of $C$ are $p_*$-acyclic (cf.\ \cite[Prop.~3.2]{Hartshorne1966}, \cite[Lem.~3.2]{AO89}).  Hence the pushforward
\begin{align}\label{eq:beilinson-E0}
  E_0^{-s,t} = p_*(C^{-s,t}) = \bigoplus_{|\aa|=s} \OO_{\PP\nn}(-\aa) \otimes \Gamma\left(\PP\nn\!, \cech^t\Big(\mathfrak U_B, \shM\otimes\Om_{\PP\nn}^{\aa}(\aa)\Big)\right)
\end{align}
satisfies \( \Tot(E_0) = \Phi_\cK(\shM) \sim \shM \). With this notation, the \emph{Beilinson spectral sequence} is the spectral sequence of the double complex $E_0$, whose first page after taking homology vertically has terms
\begin{align}\label{eq:beilinson-E1}
  E_1^{-s,t} = \bigoplus_{|\aa|=s} \OO_{\PP\nn}(-\aa)\otimes\HH^t\left(\PP\nn\!, \shM\otimes\Om_{\PP\nn}^\aa(\aa)\right) = \R^t p_*(q^*\shM\otimes\cK_s).
\end{align}

Beilinson's resolution of the diagonal and the associated spectral sequence are crucial ingredients in constructions of Beilinson monads, Tate resolutions, and virtual resolutions \cite{EFS03,EES15,BES20}. Recently, Brown and Erman \cite{BE26} expanded these constructions to toric varieties using a noncommutative analogue of a Fourier--Mukai transform. More generally, Costa and Mir\'{o}-Roig \cite{CMR07} have considered Beilinson-type spectral sequences for smooth projective varieties under certain conditions on the derived category.


\subsection{Beilinson's Collection and its Endomorphism Algebra}\label{sec:beilinson-quiver}

An important property of the Beilinson spectral sequence is that only the line bundles $\OO_{\PP\nn}(-\aa)$ for $\zero\leq\aa\leq\nn$ appear in \eqref{eq:beilinson-E1}. In particular, these line bundles form a full strong exceptional collection for $\Db{\PP\nn}$ \cite[Def.~4.1]{King97}, which induces an explicit equivalence, due to Bondal, between $\Db{\PP\nn}$ and the derived category of right modules over the path algebra of a bound quiver. This section briefly introduces this well-known equivalence, which is used in Section~\ref{sec:virtual-res} to prove uniqueness of certain virtual resolutions. For a detailed account of the representation theory of finite-dimensional algebras, see \cite{Bondal89,Bondal90,DW2017}.

A quiver $Q$ is a directed graph, allowing loops or repeated arrows. A path in $Q$ is a sequence of arrows with compatible heads and tails. The path algebra of $Q$, denoted $\kk Q$, is a graded associative algebra which has a canonical $\kk$-basis consisting of a monomial for each path in $Q$. We define the composition of two paths as their concatenation if defined and 0 otherwise, and extend linearly over $\kk$ to form the path algebra.

While $\kk Q$ is typically not commutative, if the quiver is finite and has no cycles then $\kk Q$ is finite-dimensional. The indecomposable projective $\kk Q$-modules are summands $P_v = e_v \kk Q$ of $\kk Q$ containing the monomials\footnote{The monomial $e_v\in A_0$ is the idempotent corresponding to a path of length zero at each vertex $v$, which is not considered a loop. In particular, the identity element $1\in A$ is the sum of these orthogonal idempotents.} corresponding to paths starting at a vertex $v$. This implies that maps of projective $\kk Q$-modules can be written as graded matrices with entries in $\kk Q$.

\begin{example}\label{ex:P1xP1-quiver}
  Consider the following quiver with four vertices and eight arrows:
  \[
\begin{tikzcd}
  \bullet & \bullet \\
  \bullet & \bullet
  \arrow["{x_1}"', shift right=1, from=1-1, to=1-2]
  \arrow["{y_1}", shift left=1, from=1-2, to=2-2]
  \arrow["{\bar y_0}"', shift right=1, from=1-1, to=2-1]
  \arrow["{\bar x_1}"', shift right=1, from=2-1, to=2-2]
  \arrow["{x_0}", shift left=1, from=1-1, to=1-2]
  \arrow["{\bar x_0}", shift left=1, from=2-1, to=2-2]
  \arrow["{\bar y_1}", shift left=1, from=1-1, to=2-1]
  \arrow["{y_0}"', shift right=1, from=1-2, to=2-2]
\end{tikzcd}
\]

  As an example, the monomial $x_0y_1\in\kk Q$ corresponds to the path $\bullet \xto{x_0} \bullet \xto{y_1} \bullet$ and the morphisms between the indecomposable projective $\kk Q$-modules for the source and sink vertices can be written as $1\times1$ matrices with coefficients $c_i$ in $\kk$:
  \[ \begin{pmatrix}
    c_0 x_0y_0 + c_1 x_0y_1 + c_2 x_1y_0 + c_3 x_1y_1 +
    c_4 \bar y_0 \bar x_0 + c_5 \bar y_0 \bar x_1 +
    c_6 \bar y_1 \bar x_0 + c_7 \bar y_1 \bar x_1
  \end{pmatrix}. \]
\end{example}

An important result from representation theory states that the derived category of right modules over any finite-dimensional $\kk$-algebra $A$ is equivalent to the derived category of right modules over the path algebra of a \emph{bound} quiver, meaning there is a quiver $Q$ and a set of relations $R\subset\kk Q$ such that $\kk Q/R$ is Morita equivalent to $A$ \cite[\S3]{DW2017}.

Returning to our setting, consider the direct sum of line bundles $\cE = \bigoplus_{\aa=\zero}^\nn \OO(-\aa)$ and its endomorphism algebra $A = \End\left(\cE\right)$. In this situation, $A \cong \kk Q/R$ is the path algebra of a bound quiver with vertices corresponding to summands of $\cE$ and paths corresponding to homomorphisms between them, subject to commutativity relations indicating which compositions of arrows agree as homomorphisms of the line bundles.

\begin{example}\label{ex:P1xP1-bound-quiver}
  On $X = \PP1\times\PP1$ the algebra $\End(\OO_X \oplus \OO_X(-1,0) \oplus \OO_X(0,-1) \oplus \OO_X(-1,-1))$ is the path algebra for the bound quiver $(Q, R)$ consisting of the quiver $Q$ from Example~\ref{ex:P1xP1-quiver} subject to commutativity relations $R$ given by
  \( x_i y_j = \bar y_j \bar x_i, \text{ for } i,j\in\{0,1\}. \)
  The notation $\bar x$ is used to distinguish the pairs of distinct arrows corresponding to multiplication by $x_i$ and $y_j$.  Hence the morphisms between indecomposable projective $A$-modules corresponding to the source vertex $\OO_X(-1,-1)$ and sink vertex $\OO_X$ can be written using only four monomials:
  \[ \begin{pmatrix}
    c_0 x_0y_0 + c_1 x_0y_1 + c_2 x_1y_0 + c_3 x_1y_1
  \end{pmatrix}. \]
\end{example}

In \cite[Thm.~6.2]{Bondal90}, Bondal showed that when the summands of $\cE$ form a full strong exceptional collection, the functor
\begin{equation}\label{eq:RHom}
	\RHom(\cE,-)\colon\Db{X}\to\Db{\mathrm{mod-}A},
\end{equation}
is an equivalence of categories. By a direct computation present in the proof of \cite[Thm.~6.2]{Bondal90}, this functor sends the bundles $\OO(-\aa)$ to projective modules $P_\aa$ for $\zero\leq\aa\leq\nn$. Moreover, minimality of the maps is preserved, in the sense that a non-constant map $\OO(-\aa)\gets\OO(-\bb)$ is sent to the map $e_\aa A\gets e_\bb A$ corresponding to the monomial for the path beginning at vertex $\bb$ and ending at vertex $\aa$. Composing with the reverse equivalence $-\Lotimes\cE$ gives the identity on these objects by the proof of \cite[Thm.~2.1]{King97}.

\end{appendix}


\printbibliography

\end{document}